\theoremstyle{plain}
\newtheorem{theorem}{Theorem}[section]
\newtheorem{lemma}[theorem]{Lemma}
\newtheorem{proposition}[theorem]{Proposition}
\newtheorem{corollary}[theorem]{Corollary}
\theoremstyle{definition}
\newtheorem{notation}[theorem]{Notations}
\theoremstyle{remark}
\newtheorem{remark}[theorem]{Remark}
\begin{document}
\title[On the Enhanced Power Graph of  a Finite Group]{On the Enhanced Power Graph of a Finite Group}

\author[R. P. Panda]{Ramesh Prasad Panda}

\author[S. Dalal]{Sandeep Dalal}

\author[J. Kumar]{Jitender Kumar}

\begin{abstract}
The enhanced power graph $\mathcal{P}_e(G)$ of a group $G$ is a graph with vertex set $G$ and two vertices are adjacent if they belong to the same cyclic subgroup. In this paper, we consider the minimum degree, independence number and matching number of enhanced power graphs of finite groups. We first study these graph invariants for $\mathcal{P}_e(G)$ when $G$ is any finite group, and then determine them when $G$ is a finite abelian $p$-group, $U_{6n} = \langle a, b : a^{2n} = b^3 = e, ba =ab^{-1} \rangle$, the dihedral group $D_{2n}$, or the semidihedral group  $SD_{8n}$.
If $G$ is any of these groups, we prove that $\mathcal{P}_e(G)$ is perfect and then obtain its strong metric dimension.
Additionally, we give an expression for the independence number of $\mathcal{P}_e(G)$ for any finite abelian group $G$.
 These results along with certain known equalities yield the edge connectivity, vertex covering number and edge covering number of enhanced power graphs of the  respective groups as well.
\end{abstract}

\subjclass[2010]{05C25}

\keywords{Enhanced power graph, minimum degree, independence number, matching, finite groups}

\maketitle

 \section{Introduction}
 The notion of power graph of a group was introduced as a directed graph by Kelarev and Quinn in \cite{a.kelarev2000groups} and was later extended to semigroups in \cite{a.kelarev2001powermatrices,kelarevDirectedSemigr}. The undirected power graph or simply the power graph of a semigroup, which is the underlying undirected graph of the above, was first studied in \cite{GhoshSensemigroups}. The \emph{power graph} $\mathcal{P}(G)$ of a group $G$ is an undirected and simple graph whose vertices are the elements of $G$ and two vertices are adjacent if one of them is a power of the other.

 For any group $G$, it was shown in \cite{GhoshSensemigroups} that $ \mathcal{P}(G) $ is connected if and only if $G$ is periodic. Moreover, when $G$ is finite, $ \mathcal{P}(G) $ is complete if and only if $ G $ is a cyclic group of order one or prime power. Cameron and Ghosh \cite{Ghosh} proved that two finite abelian groups with isomorphic power graphs are isomorphic. More generally, Cameron \cite{Cameron} proved that if two finite groups have isomorphic power graphs, then their directed power graphs are also isomorphic. Many other interesting results on power graphs have been obtained in literature, see \cite{curtin2014edge,a.strongmetricdim2018,MR3200118,panda2019combinatorial} and the references therein. Additionally, we can refer to the survey paper by Abawajy et al. \cite{kelarevPGsurvey} for various results and open problems on directed/undirected power graphs.

 In the remainder of the paper, graphs considered are undirected and simple. For any group $G$, the {\em enhanced power graph} $\mathcal{P}_e(G)$ of $G$ is a graph whose vertices are the elements of $G$ and two vertices are adjacent if they belong to the same cyclic subgroup. The enhanced power graph of a group was introduced by Aalipour et al. \cite{aalipour2017}. It can be observed that for any group $G$, the graph $\mathcal{P}(G)$ is a spanning subgraph of $\mathcal{P}_e(G)$. The following result characterizes the equality of these graphs.

 \begin{lemma}[{\cite{aalipour2017}}]\label{pg.epg}
 	For any finite group $G$, $\mathcal{P}_e(G) = \mathcal{P}(G)$ if and only if every cyclic subgroup of $ G $ has prime power order.
 \end{lemma}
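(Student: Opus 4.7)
The plan is to establish the two directions separately, exploiting the structural distinction between cyclic $p$-groups, whose subgroup lattice is a chain, and arbitrary cyclic groups, which contain pairs of coprime-order elements as soon as their order has two distinct prime divisors.

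For the forward direction I would argue the contrapositive. Suppose $G$ has a cyclic subgroup $\langle c\rangle$ of order $mn$ with $m, n > 1$ and $\gcd(m,n)=1$. Then $x=c^n$ and $y=c^m$ have orders $m$ and $n$ respectively, and both lie in the cyclic subgroup $\langle c\rangle$, so they are adjacent in $\mathcal{P}_e(G)$. However, every power of $x$ has order dividing $m$ and $|y|=n \nmid m$, while every power of $y$ has order dividing $n$ and $|x|=m \nmid n$. Hence neither is a power of the other, so $x$ and $y$ are non-adjacent in $\mathcal{P}(G)$, showing $\mathcal{P}_e(G) \neq \mathcal{P}(G)$.

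For the reverse direction, assume every cyclic subgroup of $G$ has prime power order. Since $\mathcal{P}(G)$ is always a spanning subgraph of $\mathcal{P}_e(G)$, it suffices to show every edge of $\mathcal{P}_e(G)$ is an edge of $\mathcal{P}(G)$. Let $x,y$ be adjacent in $\mathcal{P}_e(G)$, so that $x,y \in \langle z\rangle$ for some $z \in G$. Then $\langle x,y\rangle$ is a subgroup of the cyclic group $\langle z\rangle$, hence itself cyclic, and by hypothesis $|\langle x,y\rangle|$ is a prime power. Because the subgroup lattice of a cyclic $p$-group is totally ordered by inclusion, one of $\langle x\rangle, \langle y\rangle$ contains the other, which means one of $x,y$ is a power of the other, giving adjacency in $\mathcal{P}(G)$.

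There is no serious obstacle: the proof is a bookkeeping of two standard facts about cyclic groups. The only point to handle cleanly is ensuring the chosen elements $c^n$ and $c^m$ in the forward direction have the claimed coprime orders, which follows from the standard order formula $|c^k|=|c|/\gcd(|c|,k)$ together with $\gcd(m,n)=1$.
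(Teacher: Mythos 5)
Your proof is correct: the contrapositive argument for the forward direction (extracting elements of coprime orders $m$ and $n$ from a cyclic subgroup of non-prime-power order, which are adjacent in $\mathcal{P}_e(G)$ but not in $\mathcal{P}(G)$) and the chain structure of the subgroup lattice of a cyclic $p$-group for the converse are exactly the right ingredients, and all the small points (distinctness of $c^n$ and $c^m$, the order formula, $\mathcal{P}(G)$ being a spanning subgraph of $\mathcal{P}_e(G)$) are handled. Note that the paper itself does not prove this lemma but quotes it from Aalipour et al.\ \cite{aalipour2017}, so there is no in-paper argument to compare against; your write-up is essentially the standard proof of that cited result.
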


  In  \cite{aalipour2017}, the clique number of enhanced power graphs were obtained in terms of orders of elements of the corresponding groups.
 Hamzeh et al. \cite{MR3567537} derived the automorphim group of enhanced power graphs of finite groups.
In \cite{bera}, Bera and Bhuniya characterized the abelian groups and the nonabelian $p$-groups having dominatable enhanced power graphs.
Ma and She \cite{ma2019metric} investigated the metric dimension of enhanced power graphs of finite groups.
Zahirovi\'{c} \cite{samir} et al. supplied a characterization of finite nilpotent groups whose enhanced power graphs are perfect. In addition to enhanced power graph, other variants of power graphs have been considered recently, see \cite{MR3963087,MR3609333}.

The \emph{cyclic graph} of a semigroup $S$ is a graph whose vertex set is $S$ and two vertices $u$ and $v$ are adjacent if $ \langle u,v \rangle = \langle w \rangle $ for some $w \in S$. It can be observed that the notion of  enhanced power graph and cyclic graph coincide when $S$ is a group. Cyclic graphs have been studied for groups and semigroups in \cite{MR3200113,dalal2019,ma2013cyclic}.

 In the next section, we state necessary fundamental notions and recall some existing results.
 Then in Section \ref{main}, we investigate various structural properties of the enhanced power graph of a finite group $G$. We determine the minimum degree and independence number of $\mathcal{P}_e(G)$. Then we obtain the matching number of $\mathcal{P}_e(G)$ when $G$ is of odd order and its bounds when $G$ is of even order.
In Subsection \ref{pgroup}, we obtain the matching number of $\mathcal{P}_e(G)$ when $G$ is any finite $p$-group ($p$ is prime), and the independence number and strong metric dimension of $\mathcal{P}_e(G)$ when $G$ is a finite abelian $p$-group.
Accordingly, if $G$ is any finite abelian group, we give an expression of the independence number of $\mathcal{P}_e(G)$.
 In Sections 3.2 to 3.4, we first prove that  $\mathcal{P}_e(G)$ is perfect and then compute the aforementioned graph invariants of $\mathcal{P}_e(G)$ when $G$ is $U_{6n}$, $D_{2n}$ or $SD_{8n}$, respectively. In view of Lemma \ref{independence.vertex.cover} and Lemma \ref{edgecon.mindeg}, these results determines the edge connectivity, vertex covering number and edge covering number  of $\mathcal{P}_e(G)$ as well.

\section{Preliminaries}
\label{prelim}

	Consider a group $G$ with the identity element $e$. If $H$ is any subgroup of $G$, we simply write $H \leq G$. The \emph{exponent} of $G$ is the least common multiple of orders of its elements. An \emph{involution} (if exists) in $G$ is an element of order $2$. Given an equivalence relation $\rho$ on a set $A$, we refer to an equivalence class under $\rho$ in $A$  simply as a $\rho$-class.
	
 For any $x \in G$, $\langle x \rangle$ is the cyclic subgroup of $G$ generated by $x$. For any $x, y \in G$, we write $x \approx y$ if $\langle x \rangle = \langle y \rangle$. Observe that $\approx$ is an equivalence relation on $G$. We denote by $[x]$ the $\approx$-class containing $x$. Note that $[x]$ is precisely the set of generators of $\langle x \rangle$. For any $n \in \mathbb{N}$, consider $\mathbb{Z}_n$, the group of integers modulo $n$. We denote by $\mathbb{Z}^m_{n}$ the group obtained by taking direct product of $m$ copies of $\mathbb{Z}_{n}$.

 \begin{lemma}[{\cite[Theorem 5.2.4]{robinson1996groups}}]\label{nilpotent}
	A finite group $G$ is nilpotent if and only if $G$ is isomorphic to a direct product of its Sylow subgroups.
\end{lemma}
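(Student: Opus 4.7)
The plan is to prove both implications separately, using standard properties of nilpotent groups and Sylow theory.

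For the forward direction, assume $G$ is a finite nilpotent group and let $p_1, \dots, p_k$ be the distinct primes dividing $|G|$, with $P_i$ a Sylow $p_i$-subgroup. The first goal is to show each $P_i$ is normal in $G$. The main tool I would use is the \emph{normalizer condition}: in a nilpotent group, every proper subgroup $H$ satisfies $H < N_G(H)$. This follows by induction on the nilpotency class, exploiting the fact that the center $Z(G)$ is nontrivial and passing to $G/Z(G)$. Combined with the Frattini-type lemma that $N_G(N_G(P_i)) = N_G(P_i)$ for any Sylow subgroup (because any conjugate of $P_i$ inside $N_G(P_i)$ is already equal to $P_i$ by Sylow's theorem applied within $N_G(P_i)$), the normalizer condition forces $N_G(P_i) = G$, i.e.\ $P_i \trianglelefteq G$.

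Once normality is established, I would finish by checking the internal direct product structure. Since $|P_i|$ and $|P_j|$ are coprime for $i \neq j$, we have $P_i \cap P_j = \{e\}$, and more generally $P_i \cap (P_1 \cdots \widehat{P_i} \cdots P_k) = \{e\}$ by a divisibility argument on orders. Normal subgroups of coprime orders commute elementwise (since $[P_i, P_j] \subseteq P_i \cap P_j = \{e\}$), so the product map $P_1 \times \cdots \times P_k \to G$ is a homomorphism; it is injective by the trivial intersection property and surjective by comparing orders, using $|G| = \prod |P_i|$.

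For the converse, suppose $G \cong P_1 \times \cdots \times P_k$ where each $P_i$ is a $p_i$-group. Every finite $p$-group is nilpotent: the center of a nontrivial $p$-group is nontrivial (standard class equation argument), so iterating the upper central series terminates at $G$ in finitely many steps. The class of nilpotent groups is closed under finite direct products (the upper central series of a product is the product of the upper central series), hence $G$ itself is nilpotent.

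The step I expect to be the main obstacle is establishing the normalizer condition in nilpotent groups cleanly; once that is in hand, the rest is a straightforward assembly of Sylow theory and the internal/external direct product correspondence. Since this is precisely the content of \cite[Theorem 5.2.4]{robinson1996groups}, I would in practice just cite it rather than reprove it in the paper.
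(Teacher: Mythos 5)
Your proof is correct and is the standard argument (normalizer condition forces Sylow subgroups to be normal, coprime orders give the internal direct product, and the converse follows since $p$-groups are nilpotent and nilpotency is preserved by finite direct products), which is essentially the proof in the cited source. The paper itself offers no proof of this lemma---it is quoted directly from \cite[Theorem 5.2.4]{robinson1996groups}---so citing it, as you suggest at the end, is exactly what the authors do.
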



For integers $a$ and $b$, we denote their greatest common divisor by $(a,b)$. The following result about the Euler's totient function $\phi$ is well known (for instance, see \cite{burton2006elementary}).

\begin{lemma}\label{phieven}
	For any integer $n \geq 3$, $\phi(n)$ is even.
\end{lemma}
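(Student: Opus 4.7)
The plan is to prove Lemma \ref{phieven} by a short case split on the prime factorization of $n$, using the multiplicativity of $\phi$ together with the formula $\phi(p^k) = p^{k-1}(p-1)$ for a prime power.

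First I would observe that since $n \geq 3$, either $n$ has an odd prime divisor, or $n$ is a power of $2$ with $n \geq 4$. In the first case, write $n = p^k m$ where $p$ is an odd prime, $k \geq 1$, and $(p, m) = 1$. By multiplicativity, $\phi(n) = \phi(p^k)\phi(m) = p^{k-1}(p-1)\phi(m)$, and the factor $p - 1$ is even because $p$ is odd, so $\phi(n)$ is even. In the second case, $n = 2^k$ with $k \geq 2$, and directly $\phi(n) = 2^{k-1}$, which is even.

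An alternative approach I would mention as a sanity check is the involution argument: for $n \geq 3$, the map $a \mapsto n - a$ acts on the set of units modulo $n$ and is fixed-point-free (a fixed point would require $2a = n$, forcing $a = n/2$, which is not coprime to $n$ when $n \geq 3$), so the units pair up and $\phi(n)$ is even. This gives a one-line conceptual proof without any case analysis.

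There is no real obstacle here; the only thing to be careful about is handling the $n = 2^k$ case separately in the factorization approach, since then there is no odd prime contributing the even factor $p-1$. Either of the two approaches above is short enough to present directly, and the claim follows immediately.
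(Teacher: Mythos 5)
Your proof is correct on both counts: the factorization argument (odd prime divisor gives the even factor $p-1$ via $\phi(p^k)=p^{k-1}(p-1)$, and the leftover case $n=2^k$, $k\geq 2$, gives $\phi(n)=2^{k-1}$) and the fixed-point-free involution $a \mapsto n-a$ on the units modulo $n$ are both complete and standard. The paper itself gives no proof of this lemma, simply citing it as well known from Burton's \emph{Elementary Number Theory}, so there is nothing to compare against; either of your arguments fills that gap, with the involution argument being the slicker one-line option.
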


Consider a graph $ \Gamma $. The vertex set and edge set of $\Gamma$ are denoted by $V(\Gamma)$ and $E(\Gamma)$, respectively.
If two vertices $u$ and $v$ are adjacent, we write $u \sim v$, otherwise, $u \nsim v$.
If $a$ is any vertex or edge of $\Gamma$, we denote by $\Gamma - a$ the subgraph of $\Gamma$ obtained by deleting $a$.
A \emph{block} $B$ in $ \Gamma $ is a maximal connected subgraph such that $B - v$ is connected for all $v \in V(B)$.
A \emph{clique} in a graph $\Gamma$ is a set of vertices that induces a complete subgraph. The maximum cardinality of a clique in $\Gamma$ is called the \emph{clique number} of $\Gamma$ and it is denoted by $\omega({\Gamma})$.
The \emph{chromatic number} $\chi(\Gamma)$ of a graph $\Gamma$ is the smallest positive integer $k$ such that the vertices of $\Gamma$ can be colored in $k$ colors so that no two adjacent vertices share the same color.
The graph $\Gamma$ is \emph{perfect} if $\omega(\Gamma') = \chi(\Gamma')$ for every induced subgraph $\Gamma'$ of $\Gamma$.
Recall that the {\em complement} $\overline{\Gamma}$ of $\Gamma$ is a graph with same vertex set as $\Gamma$ and distinct vertices $u, v$ are adjacent in $\overline{\Gamma}$ if they are not adjacent in $\Gamma$. A subgraph $\Gamma'$ of $\Gamma$ is called \emph{hole} if $\Gamma'$ is a  cycle as an induced subgraph, and $\Gamma'$ is called an \emph{antihole} of   $\Gamma$ if   $\overline{\Gamma'}$ is a hole in $\overline{\Gamma}$.

\begin{theorem}[\cite{a.strongperfecttheorem}]\label{strongperfecttheorem}
	A finite graph $\Gamma$ is perfect if and only if it does not contain hole or antihole of odd length at least $5$.
\end{theorem}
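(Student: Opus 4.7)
The plan is to split the biconditional into its two implications. The forward direction (perfect implies Berge) is a short calculation with odd cycles and their complements; the reverse direction is the celebrated Strong Perfect Graph Theorem of Chudnovsky, Robertson, Seymour and Thomas, for which only the overall strategy can realistically be outlined since the full argument occupies a monograph-length paper.

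For necessity, suppose $\Gamma$ contains an induced odd cycle $C_{2k+1}$ with $k \geq 2$. Then $C_{2k+1}$ is triangle-free, so $\omega(C_{2k+1}) = 2$, while every proper coloring of an odd cycle needs three colors, so $\chi(C_{2k+1}) = 3$; the induced subgraph $C_{2k+1}$ already violates $\omega = \chi$, so $\Gamma$ fails to be perfect. For an induced $\overline{C_{2k+1}}$, the cliques correspond to independent sets in $C_{2k+1}$ (hence have size at most $k$) and the color classes correspond to edges or isolated vertices in $C_{2k+1}$; since a matching in $C_{2k+1}$ covers at most $2k$ of the $2k+1$ vertices, one gets $\omega(\overline{C_{2k+1}}) = k$ but $\chi(\overline{C_{2k+1}}) \geq k+1$, again forcing imperfection of $\Gamma$.

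For sufficiency, assume $\Gamma$ is Berge (contains no odd hole or antihole of length at least $5$). I would proceed by induction on $|V(\Gamma)|$ and invoke the structural decomposition theorem that powers the Chudnovsky--Robertson--Seymour--Thomas proof: every Berge graph either belongs to one of five basic perfect classes (bipartite graphs, line graphs of bipartite graphs, the complements of these two families, and the double split graphs), or admits one of three listed structural decompositions (a $2$-join, the complement of a $2$-join, or a balanced skew partition). Each basic class is known to be perfect, and each listed decomposition can be shown to preserve both Berge-ness of the pieces and perfection under gluing. Combining these facts with the inductive hypothesis applied to the (smaller, still Berge) pieces yields $\chi(\Gamma') = \omega(\Gamma')$ for every induced subgraph $\Gamma'$ of $\Gamma$, which is precisely the definition of perfection.

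The main obstacle is the structural decomposition itself: it requires an intricate case analysis of unavoidable configurations (proper wheels, line graphs of bipartite subdivisions of $K_4$, stretchers, and related obstructions) inside an indecomposable Berge graph, and constitutes the bulk of the original $150$-page proof. Since the present paper will apply the theorem only to certify perfection of specific enhanced power graphs, I would treat that decomposition result as a black box and simply cite \cite{a.strongperfecttheorem} for the hard direction.
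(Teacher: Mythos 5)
The paper offers no proof of this statement at all: it is quoted verbatim as the Strong Perfect Graph Theorem with the citation \cite{a.strongperfecttheorem}, which is exactly how you treat the hard direction. Your computation for the easy direction ($\omega=2<3=\chi$ for an odd hole, and $\omega=k<k+1\leq\chi$ for an odd antihole on $2k+1$ vertices) is correct, so your proposal is consistent with the paper's use of the result as a black box; the only quibble is that your sketch of the converse slightly misstates the published strategy (the argument shows a minimum imperfect Berge graph admits none of the decompositions, rather than that the decompositions ``preserve perfection under gluing''), but since you defer that direction entirely to the cited work this does not affect correctness.
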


It is known that $e$ is adjacent to all other vertices of $ \mathcal{P}_e(G) $ for any finite group $G$. A consequence of this is the following.

\begin{remark}\label{rem.e}
	For any finite group $G$, $e$ does not belong to the vertex set of any hole of length greater than $3$, or any antihole of $ \mathcal{P}_e(G)$.	
\end{remark}

We use Ramark \ref{rem.e} without explicitly referring to it.

The minimum degree of $ \Gamma $, denoted by $ \delta(\Gamma) $, is the minimum of degrees of vertices of $ \Gamma $.  An {\em independent set} of $\Gamma$ is a set of vertices none two of which are adjacent in $ \Gamma $. The {\em independence number} $ \alpha(\Gamma) $ of $\Gamma$ is the largest cardinality of an independent set of $\Gamma$.
 A \emph{matching} of $\Gamma$ is a set of edges such that no two of them are incident to the same vertex. The \emph{matching number} $\alpha'(\Gamma)$ of $\Gamma$ is the maximum  cardinality of a matching.
  A \emph{vertex cover} of $ \Gamma $ is a set of vertices that contains at least one endpoint of every edge in $ \Gamma $. The \emph{vertex covering number} $\beta(\Gamma)$ of $ \Gamma $ is the minimum cardinality of a vertex cover.
  An \emph{edge cover} of $\Gamma$ is a set $E$ of edges such that every vertex of $\Gamma$ is incident to some edge in $E$. The \emph{edge covering number} $\beta'(\Gamma)$ of $\Gamma$ is the minimum cardinality of an edge cover in $\Gamma$. We have the following equalities involving the these four invariants.

  \begin{lemma}[{\cite{WestGT}}]\label{independence.vertex.cover}
  	Consider a graph $\Gamma$.
  	\begin{enumerate}[\rm(i)]
  		\item $\alpha(\Gamma) + \beta(\Gamma) = |V(\Gamma)|$.
  		\item  If $ \Gamma $ has no isolated vertices, $\alpha'(\Gamma) + \beta'(\Gamma) = |V(\Gamma)|$.	
  	\end{enumerate}	
  \end{lemma}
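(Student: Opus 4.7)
The plan is to prove each identity by exhibiting a complementary relationship between the relevant structures.

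For (i), the key observation is that a set $S \subseteq V(\Gamma)$ is independent if and only if $V(\Gamma) \setminus S$ is a vertex cover: an edge $uv$ has both endpoints in $S$ precisely when neither endpoint lies in $V(\Gamma) \setminus S$. Hence $S \mapsto V(\Gamma) \setminus S$ is an inclusion-reversing bijection between independent sets and vertex covers, and taking $S$ to be a maximum independent set immediately gives $\alpha(\Gamma) + \beta(\Gamma) = |V(\Gamma)|$.

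For (ii), I would prove the two inequalities separately. To get $\beta'(\Gamma) \leq |V(\Gamma)| - \alpha'(\Gamma)$, take a maximum matching $M$; there are $|V(\Gamma)| - 2\alpha'(\Gamma)$ vertices unmatched by $M$, and since $\Gamma$ has no isolated vertex, one may pick for each such vertex $v$ an incident edge $e_v$. Then $M \cup \{e_v : v \text{ unmatched}\}$ is an edge cover of size at most $\alpha'(\Gamma) + (|V(\Gamma)| - 2\alpha'(\Gamma)) = |V(\Gamma)| - \alpha'(\Gamma)$. Conversely, for $\alpha'(\Gamma) \geq |V(\Gamma)| - \beta'(\Gamma)$, take a minimum edge cover $F$ viewed as a spanning subgraph. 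By minimality, no component of $F$ can contain a path on four vertices or a cycle, since in either case one edge could be removed while preserving the covering property; so every component is a star. If $F$ has $k$ connected components, each being a tree, then $|F| = |V(\Gamma)| - k$, and selecting one edge from each star produces a matching of size $k$. Thus $\alpha'(\Gamma) \geq k = |V(\Gamma)| - \beta'(\Gamma)$.

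The main subtlety is the structural claim used in the converse direction: a minimum edge cover is necessarily a disjoint union of stars. I would verify this by a short case analysis on a hypothetical non-star component, explicitly producing an edge whose deletion still leaves every vertex covered (one of the two middle edges of a $P_4$, or any edge of a cycle). Once this is in hand, the remaining counting argument is routine, and together with the first inequality the identity follows.
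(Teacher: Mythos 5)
Your proof is correct, and it is the standard argument (the Gallai identities): complementation for (i), and for (ii) the matching-plus-pendant-edges construction in one direction and the fact that a minimum edge cover decomposes into stars in the other. The paper offers no proof of this lemma --- it is quoted from West's textbook --- and your argument is essentially the one found there; the only nitpick is that a path on four vertices has a single middle edge (not two), which is the edge whose removal you want.
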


Let $G$ be a group. For any $A \subseteq G$, we denote by $\mathcal{P}_e(A)$, the subgraph of $ \mathcal{P}_e(G) $ induced by $A$. The \emph{neighbourhood} $ N(x) $ of a vertex $x$ is the set all vertices adjacent to $x$ in $ \mathcal{P}_e(G) $. Additionally, we denote $N[x] = N(x) \cup \{x\}$.

For any $x \in G$ with $o(x) \geq 3$, the set $[ x ]$ of vertices is a clique in $\mathcal{P}_e(G)$. Since  $|[ x ]| = \phi(o(x))$, we have a matching, denoted by $M_x$, of order $\dfrac{\phi(o(x))}{2}$ consisting of edges with ends in $[ x ]$.

For  vertices $u$ and $v$ in a graph $\Gamma$, we say that $z$ \emph{strongly resolves} $u$ and $v$ if there exists a shortest path from $z$ to $u$ containing $v$, or a shortest path from $z$ to $v$ containing $u$. A subset $U$ of $V(\Gamma)$ is a \emph{strong resolving set} of $\Gamma$ if every pair of vertices of $\Gamma$ is strongly resolved by some vertex of $U$. The least cardinality of a strong resolving set of $\Gamma$ is called the \emph{strong metric dimension} of $\Gamma$ and is denoted by $\operatorname{sdim}(\Gamma)$. For  vertices $u$ and $v$ in a graph $\Gamma$, we write $u\equiv v$ if $N[u] = N[v]$. Notice that that $\equiv$ is an equivalence relation on $V(\Gamma)$.
We denote by $\widehat{v}$ the $\equiv$-class containing a vertex $v$ of $\Gamma$.
 Consider a graph $\widehat{\Gamma}$ whose vertex set is the set of all $\equiv$-classes, and vertices $\widehat{u}$ and  $\widehat{v}$ are adjacent if $u$ and $v$ are adjacent in $\Gamma$. This graph is well-defined because in $\Gamma$, $w \sim v$ for all $w \in \widehat{u}$ if and only if $u \sim v$.  We observe that $\widehat{\Gamma}$ is isomorphic to the subgraph $\mathcal{R}_{\Gamma}$ of $\Gamma$ induced by a set of vertices consisting of exactly one element from each $\equiv$-class. Subsequently, we have the following result of \cite{a.strongmetricdim2018} with $\omega(\mathcal{R}_{\Gamma})$ replaced by $\omega(\widehat{\Gamma})$.

\begin{theorem}[{\cite[Theorem 2.2]{a.strongmetricdim2018}}]\label{strong-metric-dim}
For any graph $\Gamma$ with diameter $2$,  $\operatorname{sdim}(\Gamma) = |V(\Gamma)| - \omega(\widehat{\Gamma})$.
\end{theorem}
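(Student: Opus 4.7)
The plan is to reduce the computation of $\operatorname{sdim}(\Gamma)$ to a clique computation in $\widehat{\Gamma}$ via the Oellermann--Peters-Fransen identity
$$\operatorname{sdim}(\Gamma)=\beta(G_{SR}(\Gamma)),$$
where the \emph{strong resolving graph} $G_{SR}(\Gamma)$ has vertex set $V(\Gamma)$ and adjacency given by \emph{mutual maximal distance} (each of $u,v$ realizes the eccentricity from the other along its neighbourhood). Combined with Lemma~\ref{independence.vertex.cover}(i), this reduces the theorem to proving $\alpha(G_{SR}(\Gamma))=\omega(\widehat{\Gamma})$ under the diameter-$2$ hypothesis.

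First I would pin down the mutually maximally distant (MMD) pairs when $\operatorname{diam}(\Gamma)=2$. If $u\nsim v$ then $d(u,v)=2$, and every other vertex of $\Gamma$ lies within distance $2$ of each of them, so the pair is automatically MMD. If $u\sim v$ then $d(u,v)=1$, and ``$u$ maximally distant from $v$'' forces every $w\in N(u)$ to satisfy $d(w,v)\le 1$, i.e.\ $N(u)\subseteq N[v]$; symmetry yields $N(v)\subseteq N[u]$, and combining these with $u\in N(v)$ and $v\in N(u)$ gives the twin condition $N[u]=N[v]$, that is, $u\equiv v$. Thus $E(G_{SR}(\Gamma))$ is the union of the non-edges of $\Gamma$ with the adjacent $\equiv$-equivalent pairs.

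Next I would translate independent sets into cliques of the quotient. An independent set $S$ of $G_{SR}(\Gamma)$ is, by the previous step, exactly a set of pairwise adjacent vertices of $\Gamma$ that meets each $\equiv$-class in at most one vertex. Using the well-definedness of adjacency modulo $\equiv$ recorded just before the theorem, the assignment $s\mapsto\widehat{s}$ sends such an $S$ bijectively to a clique of $\widehat{\Gamma}$ of the same size, and conversely every clique of $\widehat{\Gamma}$ lifts to such an $S$ by picking one representative per class. Hence $\alpha(G_{SR}(\Gamma))=\omega(\widehat{\Gamma})$, and Lemma~\ref{independence.vertex.cover}(i) closes the argument.

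The main obstacle is the $u\sim v$ case of the MMD characterization: one must be careful that the definition of maximally distant allows $v$ itself as a neighbour of $u$, so the resulting inclusion is $N(u)\subseteq N[v]$ rather than $N(u)\subseteq N(v)$; the two symmetric inclusions together with the edge $uv$ must then be combined cleanly to yield the full closed-neighbourhood equality $N[u]=N[v]$, rather than merely a one-sided containment. Once this twin characterization is in hand, the translation between independent sets of $G_{SR}(\Gamma)$ and cliques of $\widehat{\Gamma}$ is routine.
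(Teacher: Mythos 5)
The paper does not prove this statement at all: it is imported verbatim from \cite{a.strongmetricdim2018}, so there is no internal proof to compare your argument against. Your proof is correct and is essentially the standard argument behind that citation: via the Oellermann--Peters-Fransen identity $\operatorname{sdim}(\Gamma)=\beta(G_{SR}(\Gamma))$ and Lemma \ref{independence.vertex.cover}(i) applied to $G_{SR}(\Gamma)$, everything reduces to $\alpha(G_{SR}(\Gamma))=\omega(\widehat{\Gamma})$, and your classification of the mutually maximally distant pairs in a diameter-$2$ graph (every non-adjacent pair, plus adjacent pairs with $N[u]=N[v]$) gives exactly that, since independent sets of $G_{SR}(\Gamma)$ then coincide with cliques of $\Gamma$ meeting each $\equiv$-class at most once, i.e.\ with cliques of $\widehat{\Gamma}$. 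The only blemish is the parenthetical gloss describing mutual maximal distance in terms of eccentricity, which is inaccurate (the condition is $d(w,v)\le d(u,v)$ for all $w\in N(u)$, not that $d(u,v)$ equals an eccentricity); your actual derivation of $N(u)\subseteq N[v]$, and hence of $N[u]=N[v]$ using $u\sim v$, uses the correct definition, so nothing breaks.
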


When $\Gamma = \mathcal{P}_e(G)$ for some group $G$, we denote $\widehat{\Gamma}$ by $\widehat{\mathcal{P}}_e(G)$. Also, we denote its vertex set by $\widehat{G}$.

We often use the following fundamental property of enhanced power graphs without referring to it  explicitly.

\begin{lemma}[\cite{bera,ma2013cyclic}]
	For any finite group $G$, the graph $\mathcal{P}_e(G)$ is complete if and only if $G$ is cyclic.
\end{lemma}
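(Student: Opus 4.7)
The proof has two directions. The easy direction is that a cyclic group yields a complete enhanced power graph: if $G = \langle g \rangle$, then every pair of elements lies in the single cyclic subgroup $\langle g \rangle$, so every pair is adjacent in $\mathcal{P}_e(G)$.

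For the nontrivial direction, suppose $\mathcal{P}_e(G)$ is complete. The plan is to exhibit a generator of $G$ directly. Choose an element $g \in G$ of maximum order, and set $m = o(g)$. Take an arbitrary $h \in G$. Since $g$ and $h$ are adjacent in $\mathcal{P}_e(G)$, there exists $z \in G$ with $g, h \in \langle z \rangle$. Since $\langle g \rangle \leq \langle z \rangle$ we have $m = o(g) \mid o(z)$, hence $o(z) \geq m$. On the other hand, $z \in G$ forces $o(z) \leq m$ by the maximality of $m$. Therefore $o(z) = m$, and the inclusion $\langle g \rangle \leq \langle z \rangle$ of groups of equal finite order gives $\langle g \rangle = \langle z \rangle$. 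In particular $h \in \langle g \rangle$. As $h$ was arbitrary, $G = \langle g \rangle$ and $G$ is cyclic.

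There is essentially no obstacle: the only subtle point is to argue that $o(g) \mid o(z)$ (which follows from Lagrange applied to $\langle g \rangle \leq \langle z \rangle$) and to invoke the maximality of $o(g)$ among orders of elements of $G$ to force the reverse inequality. Combining these pinches $\langle g \rangle = \langle z \rangle$, at which point the argument closes immediately.
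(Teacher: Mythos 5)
Your proof is correct: the maximal-order element argument (adjacency of $g$ with any $h$ forces a common cyclic subgroup $\langle z \rangle$, and maximality of $o(g)$ pinches $\langle z \rangle = \langle g \rangle$) is exactly the standard proof of this fact. The paper itself states this lemma only as a citation to \cite{bera,ma2013cyclic} without reproducing a proof, and your argument matches the one given in those sources, so there is nothing to object to.
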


\begin{lemma}[{\cite[Proposition 4]{Cameron}}]\label{cameron}
Let $G$ be a finite group, and $S$ be the set of vertices of $\mathcal{P}(G)$ that are adjacent to all other vertices. If $|S|>1$, then one of the following occurs.
\begin{enumerate}[\rm(i)]
\item $G$ is cyclic of prime power order,and $S=G$,
\item $G$ is cyclic of non-prime-power order $n$ and $S$ consists of the identity and the generators of $G$,
\item $G$ is generalized quaternion and $S$ contains the identity and the unique involution.
\end{enumerate}
\end{lemma}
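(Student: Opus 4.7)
The plan is to exploit the reformulation that $x \in S$ if and only if for every $y \in G$ one of $y \in \langle x \rangle$ or $x \in \langle y \rangle$ holds; since $e \in S$ trivially, the hypothesis $|S| > 1$ furnishes a non-identity $x \in S$, which I would fix throughout. I would then treat the cases $G$ cyclic and $G$ non-cyclic separately.

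In the cyclic case, the subgroup structure decides everything. If $|G| = p^n$, the subgroups form a chain, so any pair of elements is comparable and $S = G$, giving case (i). If $|G|$ has at least two prime divisors, then $e$ and the generators are plainly in $S$; conversely, given any other $h \in G$, picking a prime $p \mid |G|$ with $p \nmid o(h)$ and an element $h'$ of order $p$ yields $h \nsim h'$ in $\mathcal{P}(G)$, so $h \notin S$, giving case (ii).

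In the non-cyclic case the goal is to show $G \cong Q_{2^n}$. I would first show $G$ is a $p$-group by contradiction, assuming $|G|$ has two distinct prime divisors. The key steps are: (a) every prime $p \mid |G|$ satisfies $p \mid o(x)$, since otherwise an element $h$ of order $p$ lies outside $\langle x \rangle$, forcing $x \in \langle h \rangle$ and hence $o(x) = p$, a contradiction; (b) for each prime $r \mid |G|$, any element $y$ of maximal $r$-power order satisfies $y \in \langle x \rangle$, for otherwise $x \in \langle y \rangle$ would make $o(x)$ an $r$-power, contradicting (a) together with the multiple-prime assumption; hence $\exp(G) \mid o(x)$; (c) every $y \in G$ then has $o(y) \mid \exp(G) \mid o(x)$, and combined with the defining condition on $x$ this forces $y \in \langle x \rangle$, whence $G = \langle x \rangle$ is cyclic, a contradiction.

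Once $G$ is known to be a non-cyclic $p$-group, a localised version of step (a) shows every order-$p$ element lies in $\langle x \rangle$, so $G$ has a unique subgroup of order $p$. The classical theorem that a finite $p$-group with a unique subgroup of order $p$ is cyclic or (when $p=2$) generalized quaternion then yields $G \cong Q_{2^n}$, establishing (iii). It remains to verify $\{e, z\} \subseteq S$ for the unique involution $z \in Q_{2^n}$: every nontrivial $g \in Q_{2^n}$ has even order, so $\langle g \rangle$ contains $\langle z \rangle$, making $z$ adjacent in $\mathcal{P}(G)$ to every other vertex. The most delicate part of the plan is the Sylow/exponent manipulation in steps (b)--(c); once the reduction to a $p$-group is complete, invoking the classical theorem and checking $S$ in each case is routine.
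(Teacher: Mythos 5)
The paper itself offers no proof of this lemma---it is quoted verbatim from Cameron (Proposition 4 of the cited paper)---so your argument has to stand on its own rather than be measured against an internal proof. Most of it does stand: the reformulation of $S$, case (i), the non-cyclic reduction (every prime divisor of $|G|$ divides $o(x)$; hence, if $|G|$ had two prime divisors, elements of maximal $r$-power order lie in $\langle x\rangle$, so $\exp(G)\mid o(x)$ and then $G=\langle x\rangle$, a contradiction), the deduction that a non-cyclic $p$-group with a dominating non-identity vertex has a unique subgroup of order $p$, the appeal to the cyclic/generalized-quaternion classification, and the check that the unique involution is dominating are all correct and are essentially the standard route to this result.

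There is, however, a concrete gap in case (ii). To show a non-identity non-generator $h$ of the cyclic group $G$ of non-prime-power order $n$ is not in $S$, you propose to pick a prime $p\mid n$ with $p\nmid o(h)$; such a prime need not exist. For instance, in $\mathbb{Z}_{12}$ take $h$ of order $6$: both primes dividing $12$ divide $o(h)$, so your recipe produces no witness, even though $h\notin S$ (it is incomparable with an element of order $4$). The claim is still true and the repair is easy: since $1<o(h)<n$, either some prime $p\mid n$ does not divide $o(h)$ (your argument then applies), or every prime divisor of $n$ divides $o(h)$, in which case choose a prime $p$ whose exponent in $o(h)$ is strictly smaller than its exponent in $n$ and take $h'$ of order equal to the full $p$-part of $n$; then $o(h)\nmid o(h')$ (a second prime divides $o(h)$) and $o(h')\nmid o(h)$, so in the cyclic group $G$ neither of $h,h'$ is a power of the other and $h\nsim h'$ in $\mathcal{P}(G)$. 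With this patch your proof is complete.
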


We end this section with the following property of power graphs of finite $p$-groups.


\begin{lemma}[{\cite[Proposition 3.2]{power2017conn}}]\label{Componentp}
	For any prime $p$ and finite $p$-group $G$, each component of $\mathcal{P}(G  {\setminus} \{e\})$ has exactly $p-1$ elements of order $p$.
\end{lemma}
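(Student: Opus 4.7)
The plan is to use the fact that in any finite $p$-group, every cyclic subgroup has order a power of $p$, and consequently contains a \emph{unique} subgroup of order $p$. This "unique subgroup of order $p$" map is the central gadget: I would associate to each non-identity element $x$ of $G$ the subgroup $\Omega(x) \leq \langle x \rangle$ of order $p$, and show that this assignment is constant on each connected component of $\mathcal{P}(G \smallsetminus \{e\})$.

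First I would establish the component-constancy of $\Omega$. Suppose $u \sim v$ in $\mathcal{P}(G \smallsetminus \{e\})$; then one of $u, v$ is a power of the other, so (without loss of generality) $\langle u \rangle \subseteq \langle v \rangle$. Both are cyclic $p$-groups, and the larger one, being cyclic of prime-power order, contains a unique subgroup of order $p$; hence $\Omega(u) = \Omega(v)$. Walking along any path, one concludes that $\Omega$ is constant on each component $C$; call its common value $H_C$. In particular, if $y \in C$ has order exactly $p$, then $\langle y \rangle = \Omega(y) = H_C$, so $y \in H_C \smallsetminus \{e\}$. This shows a component can contain \emph{at most} $p-1$ elements of order $p$.

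Next I would show each component contains at least, hence exactly, $p-1$ elements of order $p$. Pick any $x \in C$; write $o(x) = p^k$ with $k \geq 1$. The generator $x^{p^{k-1}}$ of $\Omega(x)$ is a power of $x$, hence adjacent to (or equal to) $x$ in $\mathcal{P}(G \smallsetminus \{e\})$, so $\Omega(x) \smallsetminus \{e\} \subseteq C$. Moreover every non-identity element of $\Omega(x)$ has order $p$ (there are exactly $p-1$ of them, the generators of this cyclic group of order $p$). Combined with the upper bound from the previous paragraph, this gives the desired equality.

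The only subtle point is ensuring $\Omega$ really is well defined and well behaved on an adjacency — that is, that any two cyclic subgroups of a cyclic $p$-group, one contained in the other, share the same order-$p$ subgroup. This is immediate from the lattice of subgroups of a cyclic $p$-group being a chain, so there is no genuine obstacle; the bulk of the argument is really the transport of $\Omega$ along edges followed by a counting step.
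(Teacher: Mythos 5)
Your proof is correct and complete: the constancy of the unique order-$p$ subgroup $\Omega(\cdot)$ along edges gives the upper bound, and the fact that $\Omega(x)\setminus\{e\}$ consists of powers of $x$ (hence lies in the component of $x$) gives the lower bound. Note that the paper itself offers no proof of this lemma — it is imported verbatim from \cite{power2017conn} — so there is no in-paper argument to compare with; your argument is the standard self-contained justification of that cited result.
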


\section{Main results}
\label{main}

This section contains the results obtained in this paper. We first consider the minimum degree, independence number and matching number of enhanced power graphs of arbitrary finite groups. Then we determine these graph invariants and strong metric dimension for abelian $p$-groups, $U_{6n}$, $D_{2n}$ and $SD_{8n}$ in respective subsections. Moreover, we prove the enhanced power graph of each of $U_{6n}$, $D_{2n}$ and $SD_{8n}$ is perfect.

If the diameter of any graph is at most $ 2 $, then its edge connectivity and minimum degree are equal (cf. \cite{plesnik1975critical}). A consequence of this is the following result.

\begin{lemma}\label{edgecon.mindeg}
	For any finite group $G$, the edge connectivity and minimum degree of $\mathcal{P}_e(G)$ coincide.
\end{lemma}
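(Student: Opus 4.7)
The plan is to invoke the cited result of Plesn\'ik directly, so the only thing that really needs to be checked is that $\mathcal{P}_e(G)$ has diameter at most $2$ for every finite group $G$.

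First I would recall the simple but crucial fact already noted in the preliminaries: the identity $e$ is adjacent in $\mathcal{P}_e(G)$ to every other vertex, because for any $x\in G$ both $e$ and $x$ lie in the cyclic subgroup $\langle x\rangle$. Consequently, for any two distinct vertices $u,v\in V(\mathcal{P}_e(G))$, either $u\sim v$ directly, or the path $u\sim e\sim v$ certifies that their distance is at most $2$. Thus $\operatorname{diam}(\mathcal{P}_e(G))\le 2$.

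Then I would apply the result from \cite{plesnik1975critical} quoted just above the lemma: any graph of diameter at most $2$ has its edge connectivity equal to its minimum degree. Taking $\Gamma=\mathcal{P}_e(G)$ finishes the proof.

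There is no real obstacle here; the argument is essentially a one-line reduction to the cited theorem, with the only verification being the trivial fact that $e$ is a universal vertex in $\mathcal{P}_e(G)$, which forces the diameter bound.
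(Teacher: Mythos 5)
Your proposal is correct and matches the paper's argument: the paper likewise deduces the lemma from Plesn\'ik's theorem that edge connectivity equals minimum degree in graphs of diameter at most $2$, with the diameter bound coming from the fact that $e$ is adjacent to every other vertex of $\mathcal{P}_e(G)$. Nothing is missing.
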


We begin with minimum degree of enhanced power graphs of finite groups.
In light of Lemma \ref{edgecon.mindeg}, the following determines the edge connectivity of $\mathcal{P}_e(G)$ as well.

\begin{theorem}\label{mindeg}
	For any finite group $G$, the minimum degree $\delta(\mathcal{P}_e(G))=m-1$, where $m$ is the order of a smallest maximal cyclic subgroup of $G$.
\end{theorem}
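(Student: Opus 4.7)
The plan is to observe that for any $x \in G$, the closed neighborhood $N[x]$ in $\mathcal{P}_e(G)$ equals the union of all cyclic subgroups of $G$ containing $x$, which in turn equals the union of all maximal cyclic subgroups of $G$ containing $x$. The degree of $x$ is therefore $|N[x]| - 1$. To minimize this quantity, I will bound it from below for an arbitrary vertex and then exhibit a vertex attaining the bound.

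For the lower bound, fix any $x \in G$. The cyclic subgroup $\langle x \rangle$ is contained in at least one maximal cyclic subgroup $C$ of $G$ (since $G$ is finite, any chain of cyclic subgroups terminates). Then $C \subseteq N[x]$ because every element of $C$ lies in a common cyclic subgroup (namely $C$) with $x$. By the definition of $m$, we have $|C| \geq m$, and hence $|N[x]| \geq m$, i.e., $\deg(x) \geq m-1$.

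For the upper bound, let $C$ be a maximal cyclic subgroup of $G$ with $|C| = m$, and let $x$ be a generator of $C$, so that $\langle x \rangle = C$. I claim that $N[x] = C$. Indeed, if $y \in N[x]$, then $x, y \in D$ for some cyclic subgroup $D$ of $G$; then $C = \langle x \rangle \subseteq D$, and since $D$ is itself cyclic, maximality of $C$ forces $D = C$, giving $y \in C$. Conversely $C \subseteq N[x]$ as already noted. Therefore $\deg(x) = |C| - 1 = m - 1$.

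Combining the two bounds yields $\delta(\mathcal{P}_e(G)) = m - 1$, as required. There is no real obstacle here; the only point to be careful about is the elementary observation that generators of a maximal cyclic subgroup have their neighborhood confined to that subgroup, which relies on the fact that a cyclic subgroup strictly containing $C$ would contradict the maximality of $C$.
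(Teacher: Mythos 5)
Your proposal is correct and follows essentially the same route as the paper: bound $\deg(x)$ below by the size of a maximal cyclic subgroup containing $x$ (which induces a clique), and show the bound is attained at a generator of a smallest maximal cyclic subgroup, whose closed neighborhood is exactly that subgroup by maximality. Your write-up merely spells out in more detail the step $N[x]=C$ for a generator $x$ of a maximal cyclic subgroup $C$, which the paper states more briefly.
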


\begin{proof}
	Let $x \in G$. Then $x$ belongs some maximal cyclic subgroup, say $ \langle y \rangle $, of $G$. Since $\langle y \rangle$ induces a clique in $\mathcal{P}_e(G)$, we have $ \deg(x) \geq o(y) -1$. Moreover, as $\langle y \rangle$ is a maximal cyclic subgroup of $G$, $N(y) = \langle y \rangle \setminus \{y\}$. This implies $ \deg(y) =o(y)-1 $, so that $\deg(x) \geq \deg(y)$. Now let $M$ be a maximal cyclic subgroup of $G$ of least order. Then $\deg(x) \geq |M|-1$ for any $x \in G$, and that $\deg(z) = |M|-1$ for any $z$ generating $M$. Accordingly, the proof follows.
\end{proof}

Now we study the independence number of enhanced power graphs.

\begin{theorem}\label{indep}
	For any finite group $G$, the independence number of $\mathcal{P}_e(G)$ coincides with the number of maximal cyclic subgroups of $G$.
		Furthermore, if $G$ is nilpotent and the prime factors of $|G|$ are $p_1,p_2,\dots, p_r$, then the independence number $$\alpha(\mathcal{P}_e(G)) = m_1m_2 \cdots m_r,$$
		 where $m_i$ is the number of maximal subgroups of a Sylow-$p_i$ subgroup.
\end{theorem}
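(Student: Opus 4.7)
The plan is to prove the two statements separately, with the first being the main structural result and the second a corollary via the decomposition of nilpotent groups into Sylow subgroups.

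For the first claim, I would let $\mathcal{M}$ denote the set of maximal cyclic subgroups of $G$ and build a bijective-style argument. First I would choose a generator $g_M$ for each $M \in \mathcal{M}$ and verify that $S = \{g_M : M \in \mathcal{M}\}$ is independent in $\mathcal{P}_e(G)$: if $g_M$ and $g_{M'}$ were adjacent, they would lie in a common cyclic subgroup $\langle h \rangle$, and the maximality of $M$ and $M'$ (together with $M, M' \subseteq \langle h \rangle$) would force $M = \langle h \rangle = M'$. This shows $\alpha(\mathcal{P}_e(G)) \geq |\mathcal{M}|$. For the reverse inequality, given any independent set $I$, each $x \in I$ lies in at least one maximal cyclic subgroup $M_x$ (take any one containing $\langle x \rangle$); if $x \neq y$ but $M_x = M_y$, then $x, y$ both lie in the same cyclic subgroup and hence $x \sim y$, contradicting independence. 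Thus the assignment $x \mapsto M_x$ is injective and $|I| \leq |\mathcal{M}|$.

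For the nilpotent case, I would invoke Lemma \ref{nilpotent} to write $G \cong P_1 \times P_2 \times \cdots \times P_r$ where $P_i$ is the Sylow $p_i$-subgroup. The key observation is that since $|P_i|$ are pairwise coprime, a cyclic subgroup of $G$ generated by $(x_1, \ldots, x_r)$ is exactly $\langle x_1 \rangle \times \cdots \times \langle x_r \rangle$, so cyclic subgroups of $G$ factor uniquely as $C_1 \times \cdots \times C_r$ with each $C_i$ cyclic in $P_i$. Then I would check that such a product is maximal cyclic in $G$ if and only if each $C_i$ is maximal cyclic in $P_i$: enlarging any factor yields a strictly larger cyclic subgroup, and conversely any cyclic overgroup in $G$ again factors coordinatewise. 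Counting the resulting products then yields $|\mathcal{M}| = m_1 m_2 \cdots m_r$, and combining with the first claim gives the formula.

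The main obstacle I anticipate is the factorisation of maximal cyclic subgroups in the nilpotent case, since one must carefully argue both that every maximal cyclic subgroup of $G$ splits as a product and that its factors are themselves maximal cyclic. This rests on the coprimality of orders of the Sylow factors, and once that is cleanly written down the enumeration is immediate. The first claim is essentially formal and should go through without difficulty once the correspondence between independent sets and maximal cyclic subgroups is set up.
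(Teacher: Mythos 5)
Your proposal is correct and takes essentially the same route as the paper: generators of distinct maximal cyclic subgroups give a maximum independent set (with the reverse inequality from the fact that each maximal cyclic subgroup is a clique covering $G$), and in the nilpotent case maximal cyclic subgroups factor coordinatewise over the Sylow decomposition, so the count multiplies. The only difference is that you prove the coordinatewise factorisation directly from coprimality of the Sylow factors, where the paper cites it from the literature; note also that ``maximal subgroups'' in the statement must be read as ``maximal \emph{cyclic} subgroups,'' which is exactly how your count (and the paper's proof) interprets $m_i$.
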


\begin{proof}
	Let $\mu(G)$ denote the number of maximal cyclic subgroups of $G$.
	If $x$ and $y$ are two elements generating two different maximal cyclic subgroups of $G$, then they are non-adjacent in $\mathcal{P}_e(G)$. As a result, $\alpha(\mathcal{P}_e(G)) \geq \mu(G)$. Now consider an independent set $S$ in $\mathcal{P}_e(G)$.
	Recall that any group can be written as union of its maximal cyclic subgroups. Since a maximal cyclic subgroup induces a clique in $\mathcal{P}_e(G)$, no two members of an independent set in $\mathcal{P}_e(G)$ belong to the same maximal cyclic subgroup. As a result, $\alpha(\mathcal{P}_e(G))  \leq  \mu(G)$.   Thus we conclude that $\alpha(\mathcal{P}_e(G)) = \mu(G)$.
	
	Next let $G$ be nilpotent and $P_i$ be a Sylow-$p_i$ subgroup of $G$ for $1 \leq i \leq r$. By Lemma \ref{nilpotent}, we gave $G = P_1P_2 \cdots P_r$.
	Then $H$ is a maximal cyclic subgroup of $ G $ if and only if $H =  H_1 H_2 \cdots H_r $, where $ H_i $ is a maximal cyclic subgroup of $P_i$ for $ 1 \leq  i \leq r $ (see \cite{power2018cutset}).
	
	Let $ H_i $, $ H_i'$ are maximal cyclic subgroups of $P_i$ for $ 1 \leq  i \leq r $. If $ H_k \neq H_k'$ for any $ 1 \leq  k \leq r $, then $H_1 H_2 \cdots H_r \neq H_1' H_2' \cdots H_r'$. This is because the generators of $ H_k$ belong to $H_1 H_2 \cdots H_r$, but not to $H_1' H_2' \cdots H_r'$. Therefore, if the number of maximal subgroups of $P_i$ is  $m_i$, then $\alpha(\mathcal{P}_e(G)) = m_1m_2\cdots m_r$.
\end{proof}

In view of Lemma \ref{independence.vertex.cover}(i), we have the following consequence of Theorem \ref{indep}.

\begin{corollary}
	For any finite group $G$, the vertex covering number $\beta(\mathcal{P}_e(G)) = |G|-\mu(G)$, where $\mu(G)$ is the number of maximal cyclic subgroups of $G$.
\end{corollary}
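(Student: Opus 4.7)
The plan is to derive this corollary as an immediate consequence of the two results invoked just before it. Theorem \ref{indep} establishes that $\alpha(\mathcal{P}_e(G)) = \mu(G)$ for any finite group $G$, where $\mu(G)$ denotes the number of maximal cyclic subgroups of $G$. Meanwhile, Lemma \ref{independence.vertex.cover}(i) is the classical Gallai identity $\alpha(\Gamma) + \beta(\Gamma) = |V(\Gamma)|$, valid for any graph $\Gamma$.

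First I would observe that $\mathcal{P}_e(G)$ has vertex set $G$, so $|V(\mathcal{P}_e(G))| = |G|$. Then I would apply Lemma \ref{independence.vertex.cover}(i) to $\Gamma = \mathcal{P}_e(G)$, rearranging to obtain $\beta(\mathcal{P}_e(G)) = |G| - \alpha(\mathcal{P}_e(G))$. Substituting the value $\alpha(\mathcal{P}_e(G)) = \mu(G)$ furnished by Theorem \ref{indep} yields $\beta(\mathcal{P}_e(G)) = |G| - \mu(G)$, which is exactly the claimed equality.

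There is no real obstacle here; the corollary is a one-line consequence of assembling the two cited results. The only point worth noting is that Lemma \ref{independence.vertex.cover}(i) has no hypothesis beyond $\Gamma$ being a graph (the ``no isolated vertices'' assumption only enters part (ii) for matchings and edge covers), so it applies to $\mathcal{P}_e(G)$ without any further verification. Thus the proof can be written in essentially a single sentence.
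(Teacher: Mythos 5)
Your proposal is correct and matches the paper exactly: the corollary is stated there as an immediate consequence of Lemma \ref{independence.vertex.cover}(i) combined with Theorem \ref{indep}, which is precisely your one-line argument. Your remark that the ``no isolated vertices'' hypothesis is only needed for part (ii) is also accurate.
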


We next compute values and bounds of matching number of enhanced power graphs of finite groups.

\begin{theorem}\label{matching}
	Let $G$ be a finite group. If $G$ is of odd order, then the matching number $\alpha'(\mathcal{P}_e(G)) = \dfrac{|G|-1}{2}$.
	If $G$ is of even order, then $$\dfrac{|G|-(t-1)}{2} \leq \alpha'(\mathcal{P}_e(G)) \leq \dfrac{|G|}{2},$$ where $t$ is the number of involutions in $G$.
\end{theorem}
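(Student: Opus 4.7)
The plan is to establish the upper bounds trivially and obtain the lower bounds by an explicit construction of a matching using the $\approx$-classes already introduced in the preliminaries.

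For the upper bounds, recall that any matching in a graph $\Gamma$ has cardinality at most $\lfloor |V(\Gamma)|/2 \rfloor$. If $|G|$ is odd this immediately gives $\alpha'(\mathcal{P}_e(G)) \leq (|G|-1)/2$, and if $|G|$ is even this gives $\alpha'(\mathcal{P}_e(G)) \leq |G|/2$, which is the claimed upper bound in the even case.

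For the lower bounds, I would exploit the matchings $M_x$ defined just after the introduction of $[x]$. For any $x \in G$ with $o(x) \geq 3$, the class $[x]$ is a clique in $\mathcal{P}_e(G)$ of size $\phi(o(x))$, which is even by Lemma \ref{phieven}, so $M_x$ perfectly matches the vertices of $[x]$. The family $\{M_x : o(x) \geq 3\}$, taking one representative per $\approx$-class, gives pairwise disjoint edge sets whose union is a matching of $\mathcal{P}_e(G)$ covering exactly the set of elements of order at least $3$. If $|G|$ is odd, every non-identity element has odd order $\geq 3$, so these $M_x$ together cover every element except $e$, yielding a matching of size $(|G|-1)/2$; combined with the upper bound this gives equality. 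If $|G|$ is even with $t$ involutions, the set of elements of order $\geq 3$ has cardinality $|G|-1-t$, so the union of $M_x$'s has size $(|G|-1-t)/2$; since every involution is adjacent to $e$ in $\mathcal{P}_e(G)$, adjoining one such edge $\{e,y\}$ for an involution $y$ (which exists by Cauchy's theorem, as $|G|$ is even) yields a matching of size $(|G|-1-t)/2 + 1 = (|G|-(t-1))/2$, giving the claimed lower bound.

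There is no real obstacle here: the argument is essentially a bookkeeping one, and the only arithmetic fact to verify is that $(|G|-1-t)/2$ is an integer, which holds because in a group of even order the number of non-involutions and non-identity elements is even (elements of order $\geq 3$ pair up with their inverses), forcing $t$ to be odd.
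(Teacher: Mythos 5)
Your proposal is correct and follows essentially the same route as the paper: both build the matching from the cliques $[x]$ via the sets $M_x$ (one per $\approx$-class) and, in the even case, adjoin the single edge joining $e$ to an involution, with the trivial bound $\lfloor |G|/2 \rfloor$ supplying the upper estimates. Your added parity remark (that $t$ is odd when $|G|$ is even) is a harmless extra verification not spelled out in the paper.
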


\begin{proof}
First let $G$ be of odd order. Observe that for distinct $x_1, x_2 \in G {\setminus} \{e\}$, either $[ x_1 ] = [ x_2 ]$ or $[ x_1 ] \cap [ x_2 ] = \emptyset$. Accordingly, either $M_{x_1} = M_{x_2}$ or $ \displaystyle M_{x_1} \cap M_{x_2} = \emptyset$. Hence $M := \cup_{x \in G{\setminus} \{e\}} M_{x}$ is a matching of order $\dfrac{|G|-1}{2}$ in $ \mathcal{P}_e(G) $. On the other hand, the order of a largest matching in a graph of order $n$ is $\left \lfloor \dfrac{n}{2} \right \rfloor$. Hence we get $\alpha'(\mathcal{P}_e(G)) = \dfrac{|G|-1}{2}$.

Now suppose $G$ is of even order. Then it has at least one involution, say $y$. We denote the edge with ends $e$ and $y$ by $\epsilon$. Then $M = \{\epsilon\} \bigcup \cup_{x \in G, o(x) \geq 3} M_{x}$ is a matching of order $\dfrac{|G|-(t-1)}{2}$ in $ \mathcal{P}_e(G) $, where $t$ is the number of involutions in $G$. Additionally, as $\alpha'(\mathcal{P}_e(G)) \leq \dfrac{|G|}{2}$ holds trivially, we get the desired inequality when $G$ is of even order.	
\end{proof}

Considering Lemma \ref{independence.vertex.cover}(ii), we have the following corollary of Theorem \ref{matching}.

\begin{corollary}\label{edge.cover}
	Let $G$ be a finite group. If $G$ is of odd order, then the edge covering number $\beta'(\mathcal{P}_e(G)) = \dfrac{|G|+1}{2}$.
	If $G$ is of even order, then $$\dfrac{|G|}{2} \leq \beta'(\mathcal{P}_e(G)) \leq \dfrac{|G|+(t-1)}{2},$$ where $t$ is the number of involutions in $G$.
\end{corollary}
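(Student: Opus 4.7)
The plan is to derive the corollary by directly invoking Lemma \ref{independence.vertex.cover}(ii) together with the matching number values and bounds obtained in Theorem \ref{matching}. The identity $\alpha'(\Gamma)+\beta'(\Gamma)=|V(\Gamma)|$ only holds when $\Gamma$ has no isolated vertices, so the first thing I would do is justify that this hypothesis is satisfied for $\mathcal{P}_e(G)$. This is essentially immediate: the identity $e$ is adjacent in $\mathcal{P}_e(G)$ to every other element of $G$ (as already noted in the excerpt, right before Remark \ref{rem.e}), so no vertex of $\mathcal{P}_e(G)$ is isolated. (The degenerate case $|G|=1$ does not arise since the statement involves involutions when $|G|$ is even and the odd-order formula $(|G|+1)/2=1$ also yields the trivial answer for $|G|=1$.)

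With the no-isolated-vertices hypothesis in hand, I would split into the two cases of the corollary exactly as in Theorem \ref{matching}. In the odd-order case, Theorem \ref{matching} gives $\alpha'(\mathcal{P}_e(G))=(|G|-1)/2$, so Lemma \ref{independence.vertex.cover}(ii) yields
\[
\beta'(\mathcal{P}_e(G)) \;=\; |G|-\alpha'(\mathcal{P}_e(G)) \;=\; |G|-\frac{|G|-1}{2} \;=\; \frac{|G|+1}{2},
\]
which is the desired equality.

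In the even-order case, Theorem \ref{matching} supplies the two-sided bound
\[
\frac{|G|-(t-1)}{2} \;\le\; \alpha'(\mathcal{P}_e(G)) \;\le\; \frac{|G|}{2},
\]
where $t$ is the number of involutions in $G$. Applying $\beta'(\mathcal{P}_e(G))=|G|-\alpha'(\mathcal{P}_e(G))$ reverses the inequalities and yields
\[
\frac{|G|}{2} \;\le\; \beta'(\mathcal{P}_e(G)) \;\le\; \frac{|G|+(t-1)}{2},
\]
as claimed. There is no real obstacle here: the proof is purely a bookkeeping consequence of Theorem \ref{matching} and Lemma \ref{independence.vertex.cover}(ii), with the only substantive point being the observation that $e$ being a universal vertex rules out isolated vertices so that Lemma \ref{independence.vertex.cover}(ii) is applicable.
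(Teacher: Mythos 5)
Your proposal is correct and matches the paper's own argument: the corollary is stated there as an immediate consequence of Lemma \ref{independence.vertex.cover}(ii) applied to Theorem \ref{matching}, exactly the bookkeeping you carry out. Your added remark that $e$ is a universal vertex, so $\mathcal{P}_e(G)$ has no isolated vertices and the Gallai identity applies, is a small justification the paper leaves implicit.
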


From the two preceeding results, for any finite group $G$ with an unique involution, $\alpha'(\mathcal{P}_e(G)) = \beta'(\mathcal{P}_e(G)) = \dfrac{|G|}{2}$.
Recall that for any prime $p$, a finite $p$-group $G$ has exactly one subgroup of order $p$ if and only if $G$ is cyclic, or $p=2$ and $G$ is  generalized quaternion (see \cite{robinson1996groups}). This facts along with Lemma \ref{nilpotent} yield the following corollary.

\begin{corollary}
If $G$ is a nilpotent group with a cyclic or generalized quaternion Sylow-$2$ subgroup, then
 $\alpha'(\mathcal{P}_e(G)) = \beta'(\mathcal{P}_e(G)) = \dfrac{|G|}{2}$.
\end{corollary}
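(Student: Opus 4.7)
The plan is to reduce this to the immediate consequence of Theorem~\ref{matching} and Corollary~\ref{edge.cover} already highlighted in the text: if a finite group $G$ has a unique involution, then both $\alpha'(\mathcal{P}_e(G))$ and $\beta'(\mathcal{P}_e(G))$ equal $|G|/2$, because the upper and lower bounds collapse when $t=1$. So the entire task is to argue that, under the hypothesis, $G$ has exactly one involution.

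First I would invoke Lemma~\ref{nilpotent} to write $G \cong P_1 \times P_2 \times \cdots \times P_r$, where each $P_i$ is a Sylow $p_i$-subgroup and, without loss of generality, $P_1$ is the Sylow-$2$ subgroup. I would then observe that an element $(x_1,\dots,x_r)$ of this direct product has order $2$ if and only if every $x_i$ has order dividing $2$ and at least one $x_i$ has order exactly $2$; since $P_i$ has odd order for $i \geq 2$, the only element of order dividing $2$ in $P_i$ is the identity. Hence involutions in $G$ are in bijection with involutions in $P_1$.

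Next I would apply the fact recalled in the paragraph just before the corollary: a finite $p$-group has a unique subgroup of order $p$ exactly when it is cyclic, or when $p=2$ and it is generalized quaternion. Applied to $P_1$ with $p=2$, this gives that $P_1$ contains a unique subgroup of order $2$, hence a unique involution. Combining this with the previous step shows $t = 1$ for $G$.

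Finally, plugging $t=1$ into the bounds $\tfrac{|G|-(t-1)}{2} \leq \alpha'(\mathcal{P}_e(G)) \leq \tfrac{|G|}{2}$ of Theorem~\ref{matching} forces $\alpha'(\mathcal{P}_e(G)) = |G|/2$, and plugging $t=1$ into $\tfrac{|G|}{2} \leq \beta'(\mathcal{P}_e(G)) \leq \tfrac{|G|+(t-1)}{2}$ of Corollary~\ref{edge.cover} forces $\beta'(\mathcal{P}_e(G)) = |G|/2$. There is essentially no obstacle here: the only mild subtlety is to remember that the Sylow-$2$ hypothesis is being used to guarantee $|G|$ is even (so the even-order case of Theorem~\ref{matching} applies) and, simultaneously, that $P_1$ has a unique involution.
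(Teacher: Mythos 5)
Your argument is correct and is essentially the paper's own: the authors likewise combine Lemma \ref{nilpotent} (so involutions of $G$ come only from the Sylow-$2$ subgroup), the classical fact that a cyclic or generalized quaternion $2$-group has a unique involution, and then read off $t=1$ in the bounds of Theorem \ref{matching} and Corollary \ref{edge.cover}. The only point glossed in both treatments is that the Sylow-$2$ subgroup must be nontrivial for $|G|$ to be even, which you at least flag explicitly.
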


Now we investigate various structural properties of enhanced power graphs of the groups under consideration.

\bigskip
\subsection{Finite $p$-group.}\hfill
\label{pgroup}
\smallskip

In this subsection, $p$ denotes a prime number.
Following Lemma \ref{pg.epg}, power graphs and enhanced power graphs coincide for finite $p$-groups. We therefore consider the former in the following.
 Let $G$ be a finite $p$-group. Then $\mathcal{P}(G)$ is perfect, since more generally, the power graph of any finite group is perfect (see \cite{dooser}). It is known that a finite abelian group $G$ is isomorphic to a unique direct product of cyclic groups
 of prime power order. In this product, let $\tau(G)$ be the order of the smallest cyclic group. Then the following is a consequence of Theorem \ref{mindeg}.

\begin{theorem}[{\cite{power2017mindeg}}]
	For any finite abelian $p$-group $G$, the minimum degree of $\mathcal{P}(G)$ is $\tau(G)-1$.
\end{theorem}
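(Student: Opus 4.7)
The plan is to apply Lemma \ref{pg.epg} and Theorem \ref{mindeg} to reduce the claim to a statement about cyclic subgroups, and then use a Frattini-type characterisation of maximal cyclic subgroups of an abelian $p$-group. Since every cyclic subgroup of a finite $p$-group has prime power order, Lemma \ref{pg.epg} gives $\mathcal{P}(G)=\mathcal{P}_e(G)$, and Theorem \ref{mindeg} then yields $\delta(\mathcal{P}(G))=m-1$, where $m$ is the smallest order of a maximal cyclic subgroup of $G$. So it suffices to prove $m=\tau(G)$.

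The key observation I would establish is that for $h\in G\setminus\{e\}$, the subgroup $\langle h\rangle$ is maximal cyclic if and only if $h\notin pG$. For one direction, if $h=ph'$ with $h\neq e$, a short calculation using $o(h)=o(h')/\gcd(o(h'),p)$ shows $o(h')=p\cdot o(h)$ and $h'\notin\langle h\rangle$, so $\langle h\rangle\subsetneq\langle h'\rangle$. For the other direction, suppose $\langle h\rangle\subsetneq\langle h'\rangle$ and write $h=mh'$; then $o(h)=o(h')/\gcd(o(h'),m)<o(h')$, which forces $p\mid m$ (as $o(h')$ is a $p$-power), and hence $h\in pG$.

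With this characterisation in hand, I would write $G=C_1\times\cdots\times C_k$ with $C_i\cong\mathbb{Z}_{p^{a_i}}$ and $a_1\leq\cdots\leq a_k$, so $\tau(G)=p^{a_1}$. Since $pG=pC_1\times\cdots\times pC_k$, the condition $h=(y_1,\dots,y_k)\notin pG$ is equivalent to some coordinate $y_i$ being a unit in $C_i$; that coordinate then has order $p^{a_i}\geq p^{a_1}$, forcing $o(h)\geq p^{a_1}$. So every maximal cyclic subgroup has order at least $\tau(G)$. Conversely, if $g_1$ generates $C_1$, then $(g_1,0,\dots,0)\notin pG$, yielding a maximal cyclic subgroup of order exactly $\tau(G)$. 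This pins down $m=\tau(G)$.

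The main obstacle is isolating the characterisation \emph{$\langle h\rangle$ is maximal cyclic if and only if $h\notin pG$}; once available, everything else is a coordinatewise check using the direct-product decomposition. The stated formula $\delta(\mathcal{P}(G))=\tau(G)-1$ then drops out of Theorem \ref{mindeg}.
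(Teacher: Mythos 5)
Your proposal is correct and follows essentially the route the paper intends: the paper presents this result as a direct consequence of Theorem \ref{mindeg} (via Lemma \ref{pg.epg}, since $\mathcal{P}(G)=\mathcal{P}_e(G)$ for $p$-groups), leaving the identification of the smallest maximal cyclic subgroup to the cited reference. Your characterisation that $\langle h\rangle$ is maximal cyclic exactly when $h\notin pG$, and the ensuing coordinatewise argument showing this smallest order equals $\tau(G)$, correctly supplies the detail the paper omits.
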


Before proceeding further, we are required to fix some notations.

\begin{notation}\label{not1}
  Consider a prime $p$ and positive integers $\alpha_1 > \dots > \alpha_s $ and $ m_1, \dots, m_s$. For any $1 \leq j \leq s$, we denote $n_j = p^{\sum_{i=1}^{j}m_i\alpha_i}$, $r_j = \sum_{i=1}^{j} m_i$, and that $n=n_s$ and $r = r_s$. Additionally, we write $n_0 = 1$ and $r_0 = 0$ for consistency.
\end{notation}

We derive the independence number of the power graph of a finite abelian $p$-group in the next theorem. To state as well as to prove it, we follow Notation \ref{not1} throughout.

 As every finite abelian group is a direct product of cyclic groups of prime power order, assume in the following that $G \cong \mathbb{Z}^{m_1}_{p^{\alpha_1}} \times \cdots \times \mathbb{Z}^{m_s}_{p^{\alpha_s}}$ for any finite abelian $p$-group $G$.

\begin{theorem}\label{indep.number}
	For any finite abelian $p$-group $G$, the independence number
	\begin{align*}
	\alpha(\mathcal{P}(G)) &  = \sum_{t=1}^{r} \dfrac{n}{p^{t-1}}  \left\{ \frac{p^{(r_k-1)\alpha_k}}{n_k} + \mu_k \right\},
	\end{align*}
where
\begin{flushright}
$
\mu_k =
     \begin{cases}
       \sum_{j=1}^{k-1}  \dfrac{ p^{r_j} -1 }{n_j} \sum_{\beta = \alpha_{j+1}}^{\alpha_j-1} p^{(r_j-1)\beta} \text{ if }k>1,\\
       0 \text{ if } k=1,
     \end{cases}
$
\end{flushright}
and $k$ is such that $r_{k-1} < t \leq r_{k}$.
\end{theorem}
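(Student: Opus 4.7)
The plan is to invoke Theorem \ref{indep} to reduce the computation of $\alpha(\mathcal{P}(G))$ to counting the maximal cyclic subgroups of $G$, and then to carry out this count directly from the decomposition of $G$. The key characterization I would use is that in any abelian $p$-group a nontrivial cyclic subgroup $\langle x\rangle$ is maximal cyclic if and only if $x \notin pG$: if $x = py$ then $\langle x\rangle \subsetneq \langle y\rangle$ with $|\langle y\rangle| = p\,|\langle x\rangle|$, and conversely $\langle x\rangle \subsetneq \langle y\rangle$ forces $x = my$ with $p \mid m$, hence $x \in pG$. Using the decomposition $G \cong \mathbb{Z}^{m_1}_{p^{\alpha_1}} \times \cdots \times \mathbb{Z}^{m_s}_{p^{\alpha_s}}$ and denoting the $j$-th projection of $x$ by $(x_{j,1},\ldots,x_{j,m_j})$, I set $v_j(x) := \min_i v_p(x_{j,i})$ (with $v_p(0)=\alpha_j$), so that $o(x) = p^{b(x)}$ for $b(x) = \max_j(\alpha_j - v_j(x))$, and $x \notin pG$ iff the index $k(x) := \min\{j : v_j(x) = 0\}$ is defined. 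The problem then becomes to evaluate $\sum_{x \in G\setminus pG} 1/\phi(o(x))$.

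I would stratify $G \setminus pG$ by $k := k(x)$. For fixed $k$, block $k$ has order $p^{\alpha_k}$, blocks $j > k$ have orders $\leq p^{\alpha_{k+1}} \leq p^{\alpha_k - 1}$, and blocks $j < k$ (with $v_j \geq 1$) have orders $\leq p^{\alpha_j - 1}$. I split into a \emph{main} sub-case where $\max_{j<k}(\alpha_j-v_j) \leq \alpha_k - 1$ (so block $k$ uniquely attains the maximum and $b = \alpha_k$), and \emph{correction} sub-cases where some block $j<k$ also attains the top order $p^b$ for some $b \in \{\alpha_k,\ldots,\alpha_1-1\}$. The main sub-case contributes $(p^{m_k}-1)\,p^{(\alpha_k-1)r_k}\,(n/n_k)$ elements, which after division by $\phi(p^{\alpha_k})$ yields $\frac{n(p^{m_k}-1)p^{(r_k-1)(\alpha_k-1)}}{(p-1)n_k}$ subgroups. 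For each correction sub-case I parametrize by the largest index $j < k$ with $\alpha_j > b$ (equivalently $b \in [\alpha_{j+1},\alpha_j-1]$) and apply inclusion--exclusion: the count of tuples with $v_{j'} \geq \alpha_{j'} - b$ for all $j' \leq j$ minus the count with $v_{j'} \geq \alpha_{j'} - b + 1$ for all such $j'$ produces a factor $p^{r_j}-1$, encoding that at least one block $j' \leq j$ reaches the top order $p^b$; the resulting subgroup count, after dividing by $\phi(p^b)$, simplifies to $\frac{n(p^{m_k}-1)(p^{r_j}-1)p^{(r_j-1)b}}{n_j\,p^{r_k-1}(p-1)}$.

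The last step is the assembly. Using the geometric identity $\sum_{t=r_{k-1}+1}^{r_k} p^{-(t-1)} = \frac{p^{m_k}-1}{p^{r_k-1}(p-1)}$, the main contribution for each $k$ rewrites as $\sum_{t\,:\,r_{k-1} < t \leq r_k} \frac{n}{p^{t-1}}\cdot\frac{p^{(r_k-1)\alpha_k}}{n_k}$, matching the first summand of the theorem; and grouping the correction contributions over $j = 1,\ldots,k-1$ and $b = \alpha_{j+1},\ldots,\alpha_j-1$ yields $\sum_{t\,:\,r_{k-1} < t \leq r_k} \frac{n}{p^{t-1}}\cdot\mu_k$, matching the second. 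The main obstacle is the boundary $b = \alpha_k$, which lies partly in the main stratum (block $k$ uniquely maximal) and partly in a correction stratum at $j = k-1$ (block $k-1$ also attains $p^{\alpha_k}$); the inner range of $\mu_k$ is chosen precisely so that $\beta = \alpha_k$ is included when $j = k-1$, covering exactly this case. Keeping the sub-cases disjoint and the multi-index exponents aligned throughout the summations is the delicate technical point.
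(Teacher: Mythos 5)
Your proposal is correct and takes essentially the same route as the paper: both reduce to counting maximal cyclic subgroups via Theorem \ref{indep} (with maximality amounting to having a unit coordinate, i.e.\ $x \notin pG$), then count generators stratified by the location of the first unit coordinate and by the order $p^\beta$, and divide by $\phi(p^\beta)$. The only differences are bookkeeping: you stratify by block $k$ rather than by coordinate $t$ (recovering the sum over $t$ at the end through the geometric identity) and use inclusion--exclusion where the paper enumerates by the first coordinate attaining the maximal order, and your intermediate counts, including the boundary case $\beta=\alpha_k$, agree with the paper's term by term.
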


\begin{proof}
	It enough to prove for $G = \mathbb{Z}^{m_1}_{p^{\alpha_1}} \times \cdots \times \mathbb{Z}^{m_s}_{p^{\alpha_s}}$.
	Notice that $|G| = n$ and $G$ is direct a product of $r$ cyclic subgroups.
	If $G$ is cyclic, then $\alpha(\mathcal{P}(G)) = 1$ since $\mathcal{P}(G)$ is complete. Now let $G$ be noncyclic, that is, $r \geq 2$.
	
We denote the set of all maximal cyclic subgroups of $G$ by $\mathcal{M} = \mathcal{M}(G)$. Then by Theorem \ref{indep}, $\alpha(\mathcal{P}(G)) = |\mathcal{M}|$. Now, to compute $|\mathcal{M}|$, we shall partition $\mathcal{M}$ as follows.
For $x \in G$, we denote the $i$th component of $x$ by $x_i$. For any maximal cyclic subgroup $\langle x \rangle$ of $G$, observe that $(x_i,p)=1$ for at least one $1 \leq i \leq r$. For any $1 \leq t \leq r$, we define $\mathcal{M}_t = \{ \langle x \rangle \in \mathcal{M} : (x_t,p)=1 \text{ and } (x_1,p) \neq 1, \dots, (x_{t-1},p) \neq 1 \text{ if } t>1 \}$.  Thus $\alpha(\mathcal{P}(G)) = \sum_{t=1}^{r}  |\mathcal{M}_t|$.


Now we fix $1 \leq t \leq r$ and compute $|\mathcal{M}_t|$. We have $t = r_{k-1} + m$ for some $1 \leq k \leq s$ and  $1 \leq m \leq m_{k}$. So that if $\langle x \rangle \in \mathcal{M}_t$, then $o(x) \geq p^{\alpha_k}$. Next for any $\beta \geq \alpha_k$, we define $\mathcal{M}_{t,p^\beta} = \{ \langle x \rangle \in \mathcal{M}_t : o(x) = p^\beta \}$. Then we have,
		\begin{align*}
		&|\{x : \langle x \rangle \in \mathcal{M}_{t,p^{\alpha_k}} \}|\\
		& =  p^{{r_{k-1}}\alpha_k}  \cdot p^{{(m-1)}(\alpha_{k}-1)} \cdot \phi(p^{\alpha_k}) \cdot \dfrac{n}{n_{k-1} \cdot p^{m\alpha_{k}}}\\
		& = \dfrac{n\phi(p^{\alpha_k})}{p^{\{\sum_{i=1}^{k}{m_i}(\alpha_i-\alpha_{k})\}+\alpha_k+m-1}}.
		\end{align*}
		
		Hence,
\begin{align}\label{indepeq3}
|\mathcal{M}_{t,p^{\alpha_k}}| = \dfrac{n}{p^{\{\sum_{i=1}^{k}{m_i}(\alpha_i-\alpha_{k})\}+\alpha_k+m-1}}.
\end{align}		

Suppose that $t \leq m_1$. Then $k=1$ and that $t = m$. Moreover, $o(x) = p^{\alpha_{1}}$ for any $\langle x \rangle \in \mathcal{M}_{t}$, so that $\mathcal{M}_{t} = \mathcal{M}_{t,p^{\alpha_k}}$. Thus we have
		$$|\mathcal{M}_{t}| = \dfrac{n}{p^{\alpha_1+t-1}} = \dfrac{n}{p^{t-1}}  \left\{ \frac{p^{(r_1-1)\alpha_1}}{n_1} \right\}.$$
		Consequently, the proof follows for $t \leq m_1$.
		
For remaining of the proof, we take $t > m_1$, that is, $k > 1$.
We observe that $p^{\alpha_k} \leq o(x) \leq p^{\alpha_1-1}$ for any $\langle x \rangle \in \mathcal{M}_t$.
		 Let $\beta > \alpha_k$ be such that $\alpha_{l+1} \leq \beta \leq  \alpha_l-1$ for some $1 \leq l \leq k-1$. Then
		
		\begin{align*}
		& |\{x : \langle x \rangle \in \mathcal{M}_{t,p^{\beta}} \}|\\
		& = \{ \phi(p^\beta) (p^\beta)^{m_1+\cdots+m_{l}-1} + p^{\beta-1} \phi(p^\beta) (p^\beta)^{m_1+\cdots+m_{l}-2} + \cdots + (p^{\beta-1})^{m_1+\cdots+m_{l}-1} \phi(p^\beta)  \}\\
		&\qquad \qquad \qquad \qquad \qquad \cdot p^{{m_{l+1}}(\alpha_{l+1}-1)} \cdots p^{{m_{k-1}}(\alpha_{k-1}-1)} \cdot \dfrac{n  \cdot p^{{(m-1)}(\alpha_{k}-1)} \cdot \phi(p^{\alpha_k})}{p^{{m_1}\alpha_1} \cdots p^{{m_{k-1}}\alpha_{k-1}} \cdot p^{m\alpha_{k}}}\\		
		& = p^{{(m_1+\cdots+m_{l}-1)({\beta-1})}} \left ( \dfrac{p^{m_1+\cdots+m_{l}}-1}{p-1} \right )\\
		&\qquad \qquad \qquad \qquad \qquad \cdot p^{{m_{l+1}}(\alpha_{l+1}-1)} \cdots p^{{m_{k-1}}(\alpha_{k-1}-1)} \cdot \dfrac{n\cdot p^{{(m-1)}(\alpha_{k}-1)} \cdot \phi(p^{\alpha_k}) \cdot  \phi(p^\beta)}{p^{{m_1}\alpha_1} \cdots p^{{m_{k-1}}\alpha_{k-1}} \cdot p^{m\alpha_{k}}}\\
		& = p^{{(m_1+\cdots+m_{l}-1)({\beta-1})}} \left (p^{m_1+\cdots+m_{l}}-1\right )\\
		&\qquad \qquad \qquad \qquad \qquad \cdot p^{{m_{l+1}}(\alpha_{l+1}-1)} \cdots p^{{m_{k-1}}(\alpha_{k-1}-1)} \cdot \dfrac{n\cdot p^{{m}(\alpha_{k}-1)} \cdot \phi(p^\beta)}{p^{{m_1}\alpha_1} \cdots p^{{m_{k-1}}\alpha_{k-1}} \cdot p^{m\alpha_{k}}}\\
		& = \dfrac{n\phi(p^\beta) \left( p^{\sum_{i=1}^{{l}}{m_i}} -1 \right)}{p^{\{\sum_{i=1}^{l}{m_i}(\alpha_i-\beta)\}+\sum_{i=1}^{k}{m_i}-m_k+\beta+m-1}}.
		\end{align*}
		
		Hence,
		\begin{align*}
		|\mathcal{M}_{t,p^\beta}| = \dfrac{n \left( p^{\sum_{i=1}^{{l}}{m_i}} -1 \right)}{p^{\{\sum_{i=1}^{l}{m_i}(\alpha_i-\beta)\}+\sum_{i=1}^{k}{m_i}-m_k+\beta+m-1}}.
		\end{align*}
		
 From \eqref{indepeq3}, we get
		
		$$|\mathcal{M}_{t,p^{\alpha_k}}| = \dfrac{n}{p^{\sum_{i=1}^{k}{m_i}(\alpha_i-\alpha_{k})+\sum_{i=1}^{k-1}{m_i}+\alpha_k+m-1}}  + \dfrac{n\left( p^{\sum_{i=1}^{{k-1}}{m_i}} -1 \right)}{p^{\sum_{i=1}^{k}{m_i}(\alpha_i-\alpha_{k})+\sum_{i=1}^{k-1}{m_i}+\alpha_k+m-1}} .$$
	
		Additionally,	
\begin{align*}
& \sum_{j=1}^{k-1} \sum_{\beta = \alpha_{j+1}}^{\alpha_j-1} \dfrac{n \left( p^{\sum_{i=1}^{j}{m_i}} -1 \right)}{p^{\sum_{i=1}^j{m_i}(\alpha_i-\beta)+\sum_{i=1}^{k-1}{m_i}+\beta+m-1}}\\
& =  \sum_{j=1}^{k-1}  \dfrac{n \left( p^{\sum_{i=1}^{j}{m_i}} -1 \right)}{p^{\sum_{i=1}^j{m_i}\alpha_i +\sum_{i=1}^{k-1}{m_i}+m-1}} \sum_{\beta = \alpha_{j+1}}^{\alpha_j-1} p^{(\sum_{i=1}^j{m_i}-1)\beta}\\
& = \dfrac{n}{p^{\sum_{i=1}^{k-1}{m_i}+m-1}} \sum_{j=1}^{k-1}  \dfrac{ p^{\sum_{i=1}^{j}{m_i}} -1 }{p^{\sum_{i=1}^j{m_i}\alpha_i}} \sum_{\beta = \alpha_{j+1}}^{\alpha_j-1} p^{(\sum_{i=1}^j{m_i}-1)\beta}.
\end{align*}

Therefore, we have
\begin{align*}
|\mathcal{M}_{t}|
 & = \sum_{j=1}^{k-1} \sum_{\beta = \alpha_{j+1}}^{\alpha_j-1} |\mathcal{M}_{t,p^{\beta}}| \\
  & = \dfrac{n}{p^{\sum_{i=1}^{k}{m_i}(\alpha_i-\alpha_{k})+\sum_{i=1}^{k-1}{m_i}+\alpha_k+m-1}} + \sum_{j=1}^{k-1} \sum_{\beta = \alpha_{j+1}}^{\alpha_j-1} \dfrac{n \left( p^{\sum_{i=1}^{j}{m_i}} -1 \right)}{p^{\sum_{i=1}^j{m_i}(\alpha_i-\beta)+\sum_{i=1}^{k-1}{m_i}+\beta+m-1}}\\
& = \dfrac{n}{p^{\sum_{i=1}^{k-1}{m_i}+m-1}}  \left\{ \dfrac{1}{p^{\sum_{i=1}^{k}{m_i}(\alpha_i-\alpha_{k})+\alpha_k}}
 + \sum_{j=1}^{k-1}  \dfrac{ p^{\sum_{i=1}^{j}{m_i}} -1 }{p^{\sum_{i=1}^j{m_i}\alpha_i}} \sum_{\beta = \alpha_{j+1}}^{\alpha_j-1} p^{(\sum_{i=1}^j{m_i}-1)\beta} \right\}\\
& = \sum_{t=1}^{r} \dfrac{n}{p^{t-1}}  \left\{ \frac{p^{(r_k-1)\alpha_k}}{n_k} + \sum_{j=1}^{k-1}  \dfrac{ p^{r_j} -1 }{n_j} \sum_{\beta = \alpha_{j+1}}^{\alpha_j-1} p^{(r_j-1)\beta} \right\}.
\end{align*}

This concludes the proof of the theorem.	
\end{proof}

Since every abelian group is nilpotent, from Lemma \ref{nilpotent} and Theorem \ref{indep.number}, we have the following corollary.

\begin{corollary}\label{indep.abelian}
	For any finite abelian group $G$ with the Sylow subgroups $P_1,P_2, \dots, P_r$  of $G$, the independence number  $\alpha(\mathcal{P}_e(G)) = \prod_{i=1}^{r} \mu(P_i)$, where $\mu(P_i)$ can be computed using Theorem \ref{indep.number} for $1 \leq i \leq r$.
\end{corollary}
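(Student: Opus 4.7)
The plan is to derive this corollary by combining the existing reduction for nilpotent groups (Theorem \ref{indep}) with the explicit formula for the $p$-group case (Theorem \ref{indep.number}); essentially no new work is needed, so the only task is to verify that the two results compose correctly.

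First I would observe that every finite abelian group is nilpotent, so Lemma \ref{nilpotent} applies and yields a decomposition $G \cong P_1 \times P_2 \times \cdots \times P_r$, where $P_i$ is the Sylow $p_i$-subgroup of $G$ for the distinct primes $p_1, \dots, p_r$ dividing $|G|$. Next I would invoke the second part of Theorem \ref{indep}: for a nilpotent group $G$ with this decomposition, $\alpha(\mathcal{P}_e(G)) = m_1 m_2 \cdots m_r$, where $m_i$ denotes the number of maximal cyclic subgroups of $P_i$. The first part of the same theorem identifies this quantity with $\alpha(\mathcal{P}_e(P_i))$, and since each $P_i$ is a finite abelian $p_i$-group, the notation $\mu(P_i)$ of Theorem \ref{indep.number} applies and gives $m_i = \mu(P_i)$.

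Combining these identities yields
\[
\alpha(\mathcal{P}_e(G)) \;=\; \prod_{i=1}^{r} \mu(P_i),
\]
and Theorem \ref{indep.number} supplies an explicit formula for each factor $\mu(P_i)$ in terms of the invariants $\alpha_j, m_j$ appearing in the primary decomposition of $P_i$.

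I do not anticipate a real obstacle here; the corollary is a purely formal consequence of chaining the two prior theorems, and the only mild care needed is to align notation, namely to check that the ``$m_i$'' of Theorem \ref{indep} is indeed the ``$\mu(P_i)$'' of Theorem \ref{indep.number} (both count maximal cyclic subgroups of $P_i$). Once this identification is made, the statement follows without further computation.
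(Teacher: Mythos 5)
Your proposal is correct and follows essentially the same route as the paper: abelian implies nilpotent, so Lemma \ref{nilpotent} and Theorem \ref{indep} reduce $\alpha(\mathcal{P}_e(G))$ to the product of the numbers of maximal cyclic subgroups of the Sylow subgroups, each of which Theorem \ref{indep.number} computes as $\mu(P_i)$. The only point worth noting is that the ``$m_i$'' in Theorem \ref{indep} must be read as the number of maximal \emph{cyclic} subgroups of $P_i$, which you correctly do.
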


The following theorem computes the the matching number of enhanced power graphs of finite groups.

\begin{theorem}\label{matching.number}
	For any finite $p$-group $G$, the matching number
\begin{equation*}
\alpha'(\mathcal{P}(G)) =
\begin{cases}
\dfrac{|G|-1}{2} & p > 2,\\
\dfrac{|G|-(t-1)}{2}  & p = 2,
\end{cases}
\end{equation*}
where $t$ is the number of involutions in $G$.
\end{theorem}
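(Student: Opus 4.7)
The plan is to split the argument on the parity of $p$. When $p$ is odd, $|G|$ is odd, and since $\mathcal{P}(G) = \mathcal{P}_e(G)$ for every finite $p$-group by Lemma \ref{pg.epg}, Theorem \ref{matching} directly yields $\alpha'(\mathcal{P}(G)) = (|G|-1)/2$; nothing further is needed in this case.

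The substantive case is $p = 2$. Here Theorem \ref{matching} already supplies the lower bound $\alpha'(\mathcal{P}(G)) \geq (|G|-(t-1))/2$, so I only need to establish the matching upper bound. My strategy is to analyse the component structure of $\mathcal{P}(G \setminus \{e\})$ via Lemma \ref{Componentp}: with $p = 2$, each component of $\mathcal{P}(G \setminus \{e\})$ contains exactly one involution, so there are precisely $t$ such components, say $C_1, \ldots, C_t$.

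The key intermediate claim I will prove is that each $|C_i|$ is odd. The cleanest route is a simple inverse-pairing argument: since $\langle y \rangle = \langle y^{-1} \rangle$, the map $y \mapsto y^{-1}$ sends each $C_i$ into itself, and its fixed points are exactly the elements of order $2$ in $C_i$, of which there is precisely one by Lemma \ref{Componentp}. Consequently $|C_i| - 1$ is even.

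With this in hand, the upper bound follows quickly. For any matching $M$ of $\mathcal{P}(G)$, at most one edge is incident to $e$, and every remaining edge lies within a single $C_i$. Since each $|C_i|$ is odd, the portion of $M$ inside $C_i$ (or inside $C_j \setminus \{v\}$ if $e$ is matched with some $v \in C_j$) covers at most $|C_i| - 1$ vertices. A direct count then shows that at least $t - 1$ vertices of $G$ remain unmatched, yielding $\alpha'(\mathcal{P}(G)) \leq (|G|-(t-1))/2$ and hence equality. I expect the parity claim for $|C_i|$ to be the only nontrivial step: a naive attempt to describe each $C_i$ as a union of maximal cyclic $2$-subgroups containing the fixed involution quickly runs into awkward inclusion--exclusion over intersections, and the inverse-pairing observation is what bypasses this obstruction.
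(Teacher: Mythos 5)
Your proposal is correct and follows essentially the same route as the paper: both reduce the $p=2$ case to the decomposition of $\mathcal{P}(G\setminus\{e\})$ into the $t$ components (blocks through $e$) singled out by Lemma \ref{Componentp}, establish the parity of their sizes, and count unmatched vertices to get the upper bound $\frac{|G|-(t-1)}{2}$. The only cosmetic differences are that you obtain the parity of each component via the inverse-pairing $y\mapsto y^{-1}$ where the paper invokes Lemma \ref{phieven} on the $\approx$-classes, and you quote Theorem \ref{matching} for the lower bound, whose proof constructs exactly the matching the paper rebuilds here.
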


\begin{proof}
	For $p > 2$, the result follows from Theorem \ref{matching}. So for the rest of the proof, we take $p=2$.
	
	Observe that the only common vertex between any two distinct blocks in $\mathcal{P}(G)$ is $e$. Thus the number of blocks in $\mathcal{P}(G)$ coincides with the number of components of $\mathcal{P}(G {\setminus} \{e\})$.
	Following Lemma \ref{Componentp}, every block in $\mathcal{P}(G)$ has exactly one vertex of order two.
	So that the number of blocks in $\mathcal{P}(G)$ coincides with the number of involutions, say  $t$, in $G$.
	Moreover, in light of Lemma \ref{phieven}, the number of vertices in any block in $\mathcal{P}(G)$ is even.
	
	Consider a matching $M$ in $\mathcal{P}(G)$. Let $M'$ be the subset of $M$ containing all elements of $M$ whose endpoints are nonidentity elements of $G$.  Let $B$ be any block in $\mathcal{P}(G)$. Since $|V(B)|$ is even, $E(B)$ contains atmost $\dfrac{V(B)-2}{2}$ elements of $M'$. From this and the fact that $e$ is a vertex in every block in $\mathcal{P}(G)$, we have  $|M'| \leq \dfrac{|G|-(t+1)}{2}$. Additionally, $M$ contains at most one edge with $e$ as an endpoint. Thus we have $|M| \leq |M'|+1 \leq \dfrac{|G|-(t-1)}{2}$.
	
	Therefore, to prove the theorem, it is enough to produce a matching of cardinality $\dfrac{|G|-(t-1)}{2}$.
	Let $\epsilon$ be an edge with $e$ as one end and the other an involution.
	Recall the definition of $M_x$ for any $x \in G$.
	We consider $M_G:= \{\epsilon\} \bigcup \displaystyle\cup_{x \in S, \; o(x)>2}M_x$,
	where $S$ is a subset of $G$ containing exactly one element from each $\approx$-class. Then all elements of $M_G$, except $\epsilon$, have both ends in same $\approx$-class, and $\epsilon$ does not have common endpoints with any other element $M_G$. Hence $M_G$ is a matching in $\mathcal{P}(G)$.  Finally, as $|M_G| = \dfrac{|G|-(t+1)}{2}+1 = \dfrac{|G|-(t-1)}{2}$, the proof follows.
\end{proof}

\begin{theorem}\label{sdm1}
	Let $G$ be any finite abelian $p$-group with exponent $p^\alpha$. Then the strong metric dimension of $\mathcal{P}_e(G)$ is
	\begin{enumerate}[\rm(i)]
		\item $|G|-(\alpha+1)$ if $G$ is non-cyclic,
		\item $|G|-1$ if $G$ is cyclic.
	\end{enumerate}

\end{theorem}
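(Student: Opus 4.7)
The plan is to apply Theorem \ref{strong-metric-dim}, since $\mathcal{P}_e(G)$ has diameter at most $2$ (the identity $e$ is adjacent to every vertex) and $\mathcal{P}_e(G) = \mathcal{P}(G)$ by Lemma \ref{pg.epg}. The task reduces to computing $\omega(\widehat{\mathcal{P}}_e(G))$. For part (ii), when $G$ is cyclic, $\mathcal{P}_e(G)$ is complete, so all closed neighborhoods coincide, $\widehat{\mathcal{P}}_e(G) \cong K_1$, and the formula yields $|G|-1$.

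For part (i), I will first show that the $\equiv$-classes of $\mathcal{P}_e(G)$ are exactly $\{e\}$ together with the classes $[x]$ for $x \in G \setminus \{e\}$. The implication $x \approx y \Rightarrow x \equiv y$ is immediate. For the converse with $u, v \neq e$: if $u \not\sim v$ then $u \in N[u] \setminus N[v]$, so assume $u \sim v$. Then $\langle u \rangle$ and $\langle v \rangle$ are nested subgroups of their common cyclic overgroup, so without loss of generality $\langle v \rangle \subsetneq \langle u \rangle$ with $o(u) = p^j$ and $o(v) = p^k$, $k < j$. Since $G$ is a noncyclic abelian $p$-group, $G$ contains more than one subgroup of order $p$, so some $h \in G$ satisfies $o(h) = p$ and $h \notin \langle u \rangle$. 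The element $z := u + h$ then lies in $N[v] \setminus N[u]$: because $\langle u \rangle \cap \langle h \rangle = \{e\}$, we have $o(z) = p^j$ and
\[ p^{j-k} z = p^{j-k} u + p^{j-k} h = v \]
(the second summand vanishes as $o(h) = p$ and $j-k \geq 1$), so $v \in \langle z \rangle$; however $\langle u, z \rangle = \langle u, h \rangle \cong \mathbb{Z}_{p^j} \oplus \mathbb{Z}_p$ is noncyclic, so $u$ and $z$ share no cyclic subgroup. To separate $\{e\}$ from every class $[v]$, the same idea applies: picking any $h$ of order $p$ outside $\langle v \rangle$, one gets $h \in N[e]$ while $\langle v, h \rangle \cong \mathbb{Z}_{o(v)} \oplus \mathbb{Z}_p$ is noncyclic, so $h \notin N[v]$. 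This chain of constructions is the main obstacle.

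With the $\equiv$-classes identified, the key auxiliary fact is that in an abelian $p$-group every clique in $\mathcal{P}_e(G)$ is contained in a single cyclic subgroup: if $x_i$ has maximal order in the clique then for each $j$ the subgroup $\langle x_i, x_j \rangle$ is cyclic with $\langle x_i \rangle$ and $\langle x_j \rangle$ as nested subgroups, forcing $x_j \in \langle x_i \rangle$. Hence any clique in $\widehat{\mathcal{P}}_e(G)$ comes from distinct $\equiv$-classes whose representatives lie in some cyclic subgroup $\langle w \rangle$ with $o(w) \leq p^\alpha$, and these classes correspond bijectively to the cyclic subgroups of $\langle w \rangle$ (including the trivial one), numbering at most $\log_p o(w) + 1 \leq \alpha + 1$. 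Taking $w$ of order $p^\alpha$ and the representatives $e, w, pw, \ldots, p^{\alpha-1}w$ achieves this bound, so $\omega(\widehat{\mathcal{P}}_e(G)) = \alpha + 1$ and $\operatorname{sdim}(\mathcal{P}_e(G)) = |G| - (\alpha + 1)$.
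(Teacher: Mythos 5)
Your proposal is correct and follows the paper's overall skeleton---apply Theorem \ref{strong-metric-dim}, identify the $\equiv$-classes of $\mathcal{P}_e(G)$ with $\{e\}$ and the $\approx$-classes, and then show $\omega(\widehat{\mathcal{P}}_e(G))=\alpha+1$ using the chain $e, w, w^p, \dots, w^{p^{\alpha-1}}$ for $w$ of order $p^\alpha$---but your handling of the key class-identification step is genuinely different. The paper fixes a decomposition $G\cong C_1\times\cdots\times C_r$ and argues coordinate-wise, splitting into the cases where some component of $x$ is trivial and where none is, constructing a separating element in each case, and it invokes Lemma \ref{cameron} to rule out $N[x]=N[e]$. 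You instead use only that a noncyclic abelian $p$-group has at least two subgroups of order $p$: picking $h$ of order $p$ outside $\langle u\rangle$ and perturbing to $z=u+h$ separates $N[u]$ from $N[v]$ whenever $\langle v\rangle\subsetneq\langle u\rangle$, and the same trick separates $\{e\}$ from every other class. This is more uniform (no coordinates, no case split, no appeal to Cameron's lemma). Likewise, for the upper bound on $\omega(\widehat{\mathcal{P}}_e(G))$ the paper counts distinct element orders in a clique, while you show every clique of $\mathcal{P}_e(G)$ sits inside one cyclic subgroup and count its cyclic subgroups; both are sound. Two small repairs are needed: the equality $p^{j-k}z=v$ need not hold literally---what is true is $p^{j-k}z=p^{j-k}u$, which generates the unique subgroup of $\langle u\rangle$ of order $p^k$, namely $\langle v\rangle$, so $v\in\langle z\rangle$ as you actually use (or simply replace $v$ by $p^{j-k}u$, harmless since $N[v]$ depends only on $\langle v\rangle$); and in the cyclic case $\mathcal{P}_e(G)$ is complete, hence of diameter $1$, so Theorem \ref{strong-metric-dim} does not formally apply there---quote instead the standard fact that the strong metric dimension of a complete graph on $|G|$ vertices is $|G|-1$, which is what the paper's ``straightforward'' remark amounts to.
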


\begin{proof}
	The proof is straightforward when $G$ is cyclic. Now let $G$ be noncyclic. Then $G \cong C_{1} \times \cdots \times C_{r}$ for some cyclic $p$-groups $C_{1}, \dots, C_{r}$, $r \geq 2$. Consider two distinct elements $x = (x_1, x_2, \dots, x_r)$ and $y = (y_1, y_2, \dots, y_r)$ in $G$ with $o(x) \geq o(y)$. Suppose that $N[x] = N[y]$. Clearly, $x \sim y$.
	
	 If $y=e$, we have $N[x] = N[y] = G$. However, this is not possible in view of Lemma \ref{cameron}. Analogous situation occurs when $x = e$. Hence $x$ and $y$ are nonidentity elements.
	
	If possible, let $o(x)=2$. Then $o(y)=2$, since $o(x) \geq o(y)$. As $x \sim y$, we thus have $\langle x \rangle = \langle y \rangle$. That is, $x = y$, which is a contradiction.
	
	So that $o(x) \geq 3$.
	Now suppose $o(x) > o(y)$. Then as $x \sim y$, there exists an integer $t$ with $p \, | \, t$ such that $y=x^t$. We have the following cases.
	
	\smallskip
	\noindent
	{\bf Case 1.} $x_i = e$ for some fixed $1 \leq i \leq r$. We define an element ${z} = ({z_1}, {z_2}, \dots, {z_r})$ such that ${z_i}$ is an element of order $p$ in $C_i$ and ${z_j} = x_j$ for $1 \leq j \leq r$, $j \neq i$. Then notice that $o({z}) = o(x)$ and $\langle z \rangle \neq \langle x \rangle$. As a result, ${z} \nsim x$. However, as  $y={z}^t$, we have  ${z} \sim y$. This contradicts our assumption that $N[x] = N[y]$.
	
	\smallskip
	\noindent	
	{\bf Case 2.} $x_j \neq e$ for all $1 \leq j \leq r$. Let $1 \leq k \leq r$ be such that $o(x_k) \leq o(x_j)$ for all $1 \leq j \leq r$. We define an element ${w} = ({w_1}, {w_2}, \dots, {w_r})$ such that ${w_k} = {x_k}^{p^{\beta-1}+1}$, where $o(x_k) = p^{\beta}$, and ${w_j} = x_j$ for $1 \leq j \leq r$, $j \neq k$. Then $y={w}^t$, so that ${w} \sim y$. Moreover, as $o({w}) = o(x)$, we have $w = x^s$ for some $(s,p)=1$. Accordingly, $o(x_i) \, | \, (s-1)$ for all $1 \leq j \leq r$, $j \neq k$. Since $o(x_k) \leq o(x_j)$ for all $1 \leq j \leq r$, we thus get $p^{\beta} {\, | \,} (s-1)$. This implies ${x_k}^{p^{\beta-1}+1} = {x_k}$, which is not possible. As a result, ${w} \nsim x$. This again results in a contradiction.
	
	Consequently, $o(x) = o(y)$. Hence as $x \sim y$, we have $\langle x \rangle = \langle y \rangle$.
	
	Since converse is trivial, we therefore conclude that $N[x] = N[y]$ if and only if $o(x) \geq 3$ and $\langle x \rangle = \langle y \rangle$.
	From Lemma \ref{cameron}, we get $\widehat{e} = \{e\}$. Moreover, if $p=2$, then $\widehat{x} = \{x\}$ for every element $x$ of order $2$ in $G$. Hence the $\equiv$-classes and $\approx$-classes coincide for every $x \in G$.
	
	 Now consider a clique $C$ in $ \widehat{\mathcal{P}}(G) $ with at least two vertices. Then for any pair of distinct vertices $\widehat{x}$, $\widehat{y}$ in $C$, we have $o(x) \neq o(y)$. Additionally, for any $x \in G$, we have $o(x)  = p^i$ for some $0 \leq i \leq \alpha$. Thus $\omega(\widehat{\mathcal{P}}(G)) \leq \alpha+1$. Since $G$ is $p$-group of exponent $p^\alpha$, there exists $z \in G$ of order $p^\alpha$. We observe that $\{\widehat{e}\} \cup \{\widehat{z^{p^{i}}} : 0 \leq i \leq \alpha-1\}$ is a clique in $\widehat{\mathcal{P}}(G)$. Therefore, we get $\omega(\widehat{\mathcal{P}}(G)) = \alpha+1$ and subsequently,   $\operatorname{sdim}({\mathcal{P}}(G)) = |G|-(\alpha+1)$, by Theorem \ref{strong-metric-dim}.
	
\end{proof}

\bigskip
\subsection{The group $U_{6n}$.}\hfill
\smallskip

For $n \geq 1$, the group $U_{6n}$ of order $6n$ is given by the presentation
$$U_{6n} = \langle a, b : a^{2n} = b^3 = e, ba =ab^{-1} \rangle.$$

We first study the structure of $U_{6n}$ and then investigate properties of $\mathcal{P}_e(U_{6n})$.

\begin{remark}\label{order-U_6n}
	The  group $U_{6n}$ is  of order $6n$ if and only if $b \notin \langle a \rangle$.
\end{remark}
Since $ba =ab^{-1}$, for any $0 \leq i \leq  2n-1$, we have
$$ba^i = \left\{ \begin{array}{ll}
a^ib & \mbox{if $i$ is even,}\\
a^ib^2& \mbox{if $i$ is odd,}\end{array} \right. \;
\text{and} \; \;  \; \; \; \; \;
b^2a^i = \left\{ \begin{array}{ll}
a^ib^2 & \mbox{if $i$ is even,}\\
a^ib & \mbox{if $i$ is odd.}\end{array} \right.$$
\\
Thus every element of $U_{6n}{\setminus}\langle a \rangle$ is of the form $a^ib^j$ for some $0 \leq i \leq  2n-1$ and $1 \leq j \leq 2$. \\
Moreover, for any $0 \leq i \leq  2n-1$,
\begin{align}\label{u6neq}
	(ab)^i  =  \left\{ \begin{array}{ll}
	a^i & \mbox{if $i$ is even,}\\
	a^{i}b & \mbox{if $i$ is odd,}\end{array} \right. \; \; \text{and} \; \;  \; \; (ab^2)^i  =  \left\{ \begin{array}{ll}
	a^i & \mbox{if $i$ is even,}\\
	a^{i}b^2 & \mbox{if $i$ is odd.}\end{array} \right.
\end{align}

Consequently, we have the following remarks.

\begin{remark}\label{odd-power-ab}
	For  $x \in U_{6n}$, we have $x = a^{2s + 1}b$ for some $0 \leq s \leq n-1$ if and only if $x \in \langle ab \rangle {\setminus} \langle a \rangle$.	
\end{remark}

\begin{remark}\label{odd-power-ab^2}
	For  $x \in U_{6n}$, we have $x = a^{2s + 1}b^2$ for some $0 \leq s \leq n-1$ if and only if $x \in \langle ab^2 \rangle {\setminus} \langle a \rangle$.	
\end{remark}

\begin{remark}\label{U_6n-minus-ab-ab^2}
	Every element of $U_{6n} {\setminus} \left(\langle a \rangle \cup \langle ab \rangle \cup \langle ab^2 \rangle\right)$ is   of the form $a^{2i}b$ and $a^{2j}b^2$ for some  $i, j$.
\end{remark}

From the presentation of $U_{6n}$ and by  mathematical induction, we have

\begin{align}\label{u6neq1}
	(a^{2\cdot 3^{i}}b)^j  =  \left\{ \begin{array}{ll}
	a^{2\cdot 3^{i}\cdot j} & \mbox{if $j \equiv  0$ (mod 3),}\\
	a^{2\cdot 3^{i}\cdot j}b & \mbox{if $j \equiv  1$ (mod 3),}\\
	a^{2\cdot 3^{i}\cdot j}b^2 & \mbox{if $j \equiv  2$ (mod 3),}\end{array} \right.   \; \; \text{and} \; \; \; \; \; \; \;
	(a^{2\cdot 3^{i}}b^2)^j  =  \left\{ \begin{array}{ll}
	a^{2\cdot 3^{i}\cdot j} & \mbox{if $j \equiv  0$ (mod 3),}\\
	a^{2\cdot 3^{i}\cdot j}b^2 & \mbox{if $j \equiv  1$ (mod 3)},\\
	a^{2\cdot 3^{i}\cdot j}b & \mbox{if $j \equiv  2$ (mod 3)}.\end{array} \right.
\end{align}

In the rest of this section, we shall write  $n = 3^kt$  for integers $k \geq 0$ and $t>0$ such that $3 \nmid t$.

By Remark \ref{U_6n-minus-ab-ab^2} and \eqref{u6neq1}, we have the following lemma.

\begin{lemma}\label{even-power-a-b}
	For $n \geq 1$, we have  $U_{6n} = \langle a \rangle \cup \langle ab \rangle \cup \langle ab^2 \rangle \cup  \bigcup\limits_{i = 0}^{k} \left(\langle a^{2\cdot3^{i}}b\rangle \cup \langle a^{2\cdot3^{i}}b^2\rangle\right).$
\end{lemma}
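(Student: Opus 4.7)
The plan is to start from Remark \ref{U_6n-minus-ab-ab^2}, which already yields
$U_{6n} = \langle a\rangle \cup \langle ab\rangle \cup \langle ab^{2}\rangle \cup \{a^{2m}b,\, a^{2m}b^{2} : 0\le m \le n-1\}$.
It therefore suffices to prove that every element of the form $a^{2m}b$ or $a^{2m}b^{2}$ with $0\le m \le n-1$ lies in $\bigcup_{i=0}^{k} \bigl(\langle a^{2\cdot 3^{i}}b\rangle \cup \langle a^{2\cdot 3^{i}}b^{2}\rangle\bigr)$. Inspecting \eqref{u6neq1}, the task reduces to finding, for each such $m$, an index $i\in\{0,1,\dots,k\}$ and an integer $j\not\equiv 0\pmod 3$ with $3^{i}j\equiv m\pmod n$; depending on $j\bmod 3$, exactly one of $(a^{2\cdot 3^{i}}b)^{j}$ and $(a^{2\cdot 3^{i}}b^{2})^{j}$ equals $a^{2m}b$ while the other equals $a^{2m}b^{2}$, so both desired elements are captured simultaneously.

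Next I would split according to the $3$-adic valuation $v_{3}(m)$, using the convention $v_{3}(0)=\infty$. If $v_{3}(m)<k$, write $m=3^{s}u$ with $s=v_{3}(m)$ and $3\nmid u$; then setting $i=s$ and $j=u$ gives $3^{i}j=m$ on the nose and $j\not\equiv 0\pmod 3$. If $v_{3}(m)\ge k$, set $i=k$ and write $m=3^{k}u$; the congruence $3^{k}j\equiv m\pmod{3^{k}t}$ becomes $j\equiv u\pmod t$, and among the three lifts $u,\,u+t,\,u+2t$ the residues modulo $3$ are all distinct because $\gcd(3,t)=1$, so at least one of them is not divisible by $3$. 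For this $j$ one has $3^{k}j = m+\lambda n$ for some $\lambda\in\{0,1,2\}$, and $a^{2n}=e$ then yields $a^{2\cdot 3^{k}j}=a^{2m}$.

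With $i$ and $j$ in hand, the final step is to read off \eqref{u6neq1}: if $j\equiv 1\pmod 3$, then $(a^{2\cdot 3^{i}}b)^{j}=a^{2m}b$ and $(a^{2\cdot 3^{i}}b^{2})^{j}=a^{2m}b^{2}$; if $j\equiv 2\pmod 3$, then $(a^{2\cdot 3^{i}}b)^{j}=a^{2m}b^{2}$ and $(a^{2\cdot 3^{i}}b^{2})^{j}=a^{2m}b$. In either case both $a^{2m}b$ and $a^{2m}b^{2}$ belong to $\langle a^{2\cdot 3^{i}}b\rangle\cup \langle a^{2\cdot 3^{i}}b^{2}\rangle$, finishing the argument.

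The main obstacle is the case $v_{3}(m)\ge k$, where one must avoid $j\equiv 0\pmod 3$; this is precisely where the hypothesis $3\nmid t$ does essential work, since without it shifting $j$ by multiples of $t$ could not alter its residue modulo $3$. The boundary subcase $m=0$ (the element $b$ itself, which lies in none of $\langle a\rangle,\langle ab\rangle,\langle ab^{2}\rangle$) falls inside this case with $u=0$, and is handled uniformly by choosing $j\in\{t,2t\}$ not divisible by $3$.
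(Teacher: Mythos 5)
Your proof is correct and follows the same route the paper indicates: reduce via Remark \ref{U_6n-minus-ab-ab^2} to elements of the form $a^{2m}b$, $a^{2m}b^2$, then use \eqref{u6neq1} to exhibit them as powers of $a^{2\cdot 3^i}b$ or $a^{2\cdot 3^i}b^2$. The paper merely asserts this without detail, and your explicit choice of $i$ via the $3$-adic valuation of $m$ and of $j$ among the lifts $u,\,u+t,\,u+2t$ (using $3\nmid t$) correctly supplies the omitted verification.
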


For the remainder of this subsection, we shall denote
 $P_i = \langle a^{2\cdot 3^{i}}b \rangle$, $Q_i = \langle a^{2\cdot 3^{i}}b^2 \rangle$ for $0 \leq i \leq k$, and $P_{k +1 } = \langle ab \rangle$, $Q_{k+1} = \langle ab^2 \rangle$.

 Thus we have
 \begin{align}\label{u6neq2}
 	U_{6n} =  \langle a \rangle \cup \bigcup_{i=0}^{ k}\left( P_i \cup Q_i\right) \cup \left(P_{k+1} \cup Q_{k+1}\right).
 \end{align}

We observe that that $\{ b, a^{2\cdot 3^{k}} \} \subset \langle a^{2\cdot 3^{k}}b \rangle \cap \langle a^{2\cdot 3^{k}}b^2 \rangle$. As consequences, we have the following remarks.
\begin{remark}
	For $i = k$, we have $P_i = Q_i$.
\end{remark}

\begin{remark}\label{a^{23^i+1}-a^{23^i}}
	For $0 \leq i \leq k-1$, we have $\langle a^{2\cdot 3^{{i + 1}}} \rangle \subset \langle a^{2\cdot 3^{{i}}} \rangle. $
\end{remark}

\begin{remark}
	For $i = k$, we have $\langle a^{2\cdot 3^{{i  + 1}}} \rangle =  \langle a^{2\cdot 3^{{i}}} \rangle. $
\end{remark}

Since $o(a) = 2n$, we have the following remark.
\begin{remark}\label{order-a^{23^k}}
	In $U_{6n}$, we have $o(a^{2\cdot3^{k}}) = t$.
\end{remark}

In view of \eqref{u6neq} and \eqref{u6neq1}, we have the following lemma.

\begin{lemma}\label{P_i-intersection-generated-a}
	For $i \leq k$, we have $P_i \cap \langle a \rangle =  Q_i \cap \langle a \rangle = \langle a^{2\cdot 3^{i +1}} \rangle$. Moreover,  $P_k \cap \langle a \rangle = \langle a^{2\cdot 3^{k}} \rangle$.
\end{lemma}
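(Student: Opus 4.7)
The plan is to use the explicit power formula in equation \eqref{u6neq1} together with Remark \ref{order-U_6n}, which guarantees that $b\notin\langle a\rangle$, so that $a^sb$ and $a^sb^2$ never lie in $\langle a\rangle$ for any $s$. Thus the only way a power $(a^{2\cdot 3^{i}}b)^{j}$ can belong to $\langle a\rangle$ is for the $b$-component to collapse, which by \eqref{u6neq1} happens exactly when $j\equiv 0\pmod 3$.

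Concretely, I would first fix $i\leq k$ and consider an arbitrary element of $P_i$, written as $(a^{2\cdot 3^{i}}b)^{j}$. Splitting into the three residue classes of $j$ modulo $3$, I use \eqref{u6neq1}: in the cases $j\equiv 1,2\pmod 3$ the element has a nontrivial $b$-part and so cannot lie in $\langle a\rangle$; in the case $j\equiv 0\pmod 3$, writing $j=3m$ yields $(a^{2\cdot 3^{i}}b)^{3m}=a^{2\cdot 3^{i+1}m}\in\langle a^{2\cdot 3^{i+1}}\rangle$. Conversely every power of $a^{2\cdot 3^{i+1}}$ arises in this way, so $P_i\cap\langle a\rangle=\langle a^{2\cdot 3^{i+1}}\rangle$. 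The argument for $Q_i=\langle a^{2\cdot 3^{i}}b^2\rangle$ is verbatim the same using the second formula in \eqref{u6neq1}.

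For the ``moreover'' statement at $i=k$, the computation above gives $P_k\cap\langle a\rangle=\langle a^{2\cdot 3^{k+1}}\rangle$, so it only remains to identify this with $\langle a^{2\cdot 3^{k}}\rangle$. This is where one must be careful: since $n=3^{k}t$ with $3\nmid t$, Remark \ref{order-a^{23^k}} gives $o(a^{2\cdot 3^{k}})=t$, and because $\gcd(3,t)=1$, the cube $a^{2\cdot 3^{k+1}}=(a^{2\cdot 3^{k}})^{3}$ is again a generator of $\langle a^{2\cdot 3^{k}}\rangle$; equivalently, one may quote the remark that $\langle a^{2\cdot 3^{k+1}}\rangle=\langle a^{2\cdot 3^{k}}\rangle$ directly.

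I do not expect a serious obstacle here; the only point that needs care is keeping track of the distinction between the formal exponent $2\cdot 3^{i+1}$ appearing from the calculation and the actual cyclic subgroup it generates inside $\langle a\rangle$, especially in the boundary case $i=k$, where the $3$-part of $o(a)$ is exhausted and so passing from $3^{k}$ to $3^{k+1}$ in the exponent does not shrink the group. Everything else is a direct reading of \eqref{u6neq1}.
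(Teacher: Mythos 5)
Your proposal is correct and follows essentially the same route as the paper: both use \eqref{u6neq1} to see that $(a^{2\cdot 3^{i}}b)^{j}\in\langle a\rangle$ exactly when $3\mid j$, giving $P_i\cap\langle a\rangle=Q_i\cap\langle a\rangle=\langle a^{2\cdot 3^{i+1}}\rangle$, and then handle $i=k$ by noting $o(a^{2\cdot 3^{k}})=t$ with $3\nmid t$. The only cosmetic difference is that the paper exhibits explicit exponents (split by $t\equiv 1,2\pmod 3$) to write $a^{2\cdot 3^{k}}$ as a power of $a^{2\cdot 3^{k+1}}$, whereas you invoke the coprimality/generator argument, which is equivalent.
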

\begin{proof}
	By \eqref{u6neq1}, we have $(a^{2\cdot 3^{i}}b)^j \in P_i \cap \langle a \rangle$ if and only if $j = 3l$ for some $l$. Then  $ P_i \cap \langle a \rangle = \langle (a^{2\cdot 3^{i}}b)^3 \rangle = \langle a^{2\cdot 3^{{i +1}}} \rangle$. Similarly, we have $Q_i \cap \langle a \rangle = \langle a^{2\cdot 3^{i +1}} \rangle$. For $i =k$, we have $P_k \cap \langle a \rangle = \langle a^{2\cdot 3^{{k +1}}} \rangle$. Since $a^{2\cdot 3^{{k+1}}} \in \langle a^{2\cdot 3^{{k}}} \rangle$, and
	$$a^{2\cdot 3^{k}}  =  \left\{ \begin{array}{ll}
	\left( a^{2\cdot 3^{{k +1}}} \right)^{\frac{2t +1}{3}} & \mbox{if $t \equiv  1$ mod 3},\\
	\left( a^{2\cdot 3^{{k +1}}}\right)^{\frac{t +1}{3}}& \mbox{if $t \equiv  2$ mod 3}
	,\end{array}\right.$$
	we get $P_k \cap \langle a \rangle = \langle a^{2\cdot 3^{{k}}} \rangle$.
\end{proof}

\begin{lemma}\label{order-element-U_6n}
	For the group  $U_{6n}$, we have
	\begin{enumerate}[\rm(i)]
		\item $|P_i| = |Q_i| = 3^{k - i}t$, where  $0 \leq i \leq k - 1$. Moreover, $|P_i {\setminus} \langle a \rangle| = |Q_i {\setminus} \langle a \rangle| = 2 \cdot 3^{k - i - 1}t$
		\item $|P_k| = |Q_k| = 3t$. Moreover, $|P_k {\setminus} \langle a \rangle| = |Q_k {\setminus} \langle a \rangle| = 2t$.
		\item $|P_{k + 1}| = |Q_{k +1}| = 2n$. Moreover, $|P_{k +1} {\setminus} \langle a \rangle| = |Q_{k + 1} {\setminus} \langle a \rangle| =n$.
	\end{enumerate}
\end{lemma}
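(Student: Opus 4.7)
The plan is to compute the orders of the generators $a^{2\cdot 3^{i}}b$, $a^{2\cdot 3^{i}}b^{2}$, $ab$, $ab^{2}$ directly from the multiplication rules \eqref{u6neq} and \eqref{u6neq1}, and then for each cyclic subgroup count how many of its powers land inside $\langle a\rangle$.

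First I would dispose of part (iii). Since $(ab)^{i}=a^{i}$ when $i$ is even and $(ab)^{i}=a^{i}b$ when $i$ is odd by \eqref{u6neq}, the identity $(ab)^{i}=e$ forces $i$ to be even and $2n\mid i$; hence $o(ab)=2n$ and similarly $o(ab^{2})=2n$. Among the $2n$ distinct powers $(ab)^{0},(ab)^{1},\dots,(ab)^{2n-1}$, the elements in $\langle a\rangle$ are precisely those with even exponent, which gives $n$ elements in $P_{k+1}\cap\langle a\rangle$ and $n$ elements in $P_{k+1}\setminus\langle a\rangle$; the same count works for $Q_{k+1}$.

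Next I would handle parts (i) and (ii) uniformly using \eqref{u6neq1}: $(a^{2\cdot 3^{i}}b)^{j}=e$ iff $3\mid j$ and $a^{2\cdot 3^{i}j}=e$, i.e.\ $2n\mid 2\cdot 3^{i}j$. Since $o(a)=2n=2\cdot 3^{k}t$, this is equivalent to $3^{k-i}t\mid j$ together with $3\mid j$, so
\[
o(a^{2\cdot 3^{i}}b)=\operatorname{lcm}(3^{k-i}t,\,3).
\]
For $0\leq i\leq k-1$ the factor $3^{k-i}t$ is already a multiple of $3$, giving $|P_{i}|=3^{k-i}t$. For $i=k$, since $\gcd(t,3)=1$ by assumption, the lcm equals $3t$, giving $|P_{k}|=3t$. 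The argument for $Q_{i}$ is identical using the second identity in \eqref{u6neq1}.

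Finally, for the complements I would observe that among the $|P_{i}|$ distinct powers $(a^{2\cdot 3^{i}}b)^{j}$ with $0\leq j\leq |P_{i}|-1$, the ones lying in $\langle a\rangle$ are exactly those with $3\mid j$ by \eqref{u6neq1}; there are $|P_{i}|/3$ such powers. Subtracting yields $|P_{i}\setminus\langle a\rangle|=\tfrac{2}{3}|P_{i}|$, which gives $2\cdot 3^{k-i-1}t$ for $i\leq k-1$ and $2t$ for $i=k$, and the same count holds for $Q_{i}$. There is no real obstacle here; the whole argument is bookkeeping with the normal form of elements in $U_{6n}$, and the only mild subtlety is remembering that $3\nmid t$ when $i=k$ forces the lcm to pick up an extra factor of $3$.
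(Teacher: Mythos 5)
Your proposal is correct and follows essentially the same computational route as the paper: both read off orders and intersections with $\langle a\rangle$ directly from the power formulas \eqref{u6neq} and \eqref{u6neq1}, together with the fact that $b\notin\langle a\rangle$ (Remark \ref{order-U_6n}). The only differences are cosmetic — you fold cases (i) and (ii) into one $\operatorname{lcm}$ computation and count exponents divisible by $3$ directly, whereas the paper treats $i=k$ by a separate enumeration and gets the complement sizes via Lemma \ref{P_i-intersection-generated-a}.
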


\begin{proof}
	(i) Since $P_i =  \langle a^{2\cdot 3^{{i}}}b \rangle$, we have $ (a^{2\cdot 3^{{i}}}b)^{3^{{k-i}} t} = e$. If $l <   3^{{k-i}} t$, then by \eqref{u6neq1} and Remark \ref{order-U_6n},  $(a^{2\cdot 3^{i}}b)^l \neq e$. So that $|P_i| = 3^{k - i}t$. Now we observe that $o(a^{2\cdot 3^{{i + 1}}}) = 3^{{k -i -1 }} t$. Thus we get $|P_i {\setminus} \langle a \rangle| = 2\cdot 3^{{k - i - 1}} t$, by Lemma \ref{P_i-intersection-generated-a}. Similarly, $|Q_i {\setminus} \langle a \rangle| = 2\cdot 3^{{k - i - 1}} t$.
	
	\smallskip
	\noindent	
	(ii) Since $P_k =  \langle a^{2\cdot 3^{k}}b \rangle$, by  \eqref{u6neq1}, we have $(a^{2\cdot3^{k}}b)^t \in \{ b,b^2 \}$ and  $ a^{2\cdot3^{k}}\in \{(a^{2\cdot3^{k}}b)^{2t + 1}, (a^{2\cdot3^{k}}b)^{t+1} \}$. Consequently, $a^{2\cdot3^{k}}, b \in \langle a^{2\cdot3^{k}}b \rangle$. By Remark \ref{order-a^{23^k}} and as $o(a^{2\cdot 3^{k}}) = t$, it follows that  $(a^{2\cdot 3^{k}})^{i}b, (a^{2\cdot 3^{k}})^{j}b^2$ and $(a^{2\cdot 3^{k}})^l$ are all distinct elements in   $\langle a^{2\cdot3^{k}}b \rangle$, where $1 \leq i,j, l \leq t$. As a result, $ o(a^{2\cdot3^{k}}b) \geq 3t$.  Since $(a^{2\cdot3^{k}}b)^{3t} = e$, we obtain  $o(a^{2\cdot3^{k}}b) = 3t = |P_k|$. Accordingly, by Lemma \ref{P_i-intersection-generated-a} and Remark \ref{order-a^{23^k}},  $|P_k {\setminus} \langle a \rangle| = 2t$. The proof for $Q_k$ is similar.
	
	\smallskip
	\noindent		
	(iii) The proof is straightforward following \eqref{u6neq}.
\end{proof}

The next result describes structure of $U_{6n}$ further.

\begin{proposition}\label{Common-elementP_i-Q_i}
	For $0 \leq i \leq k+1$, the following hold:
	\begin{enumerate}[\rm(i)]
		\item If $x \in P_i {\setminus} \langle a \rangle$,  then $x \notin \left( \displaystyle \bigcup_{j=0, j\neq i}^{k+1} P_j \right) \cup \left(\displaystyle  \bigcup_{j=0}^{k+1} Q_j \right)$.
		\item If $x \in Q_i {\setminus} \langle a \rangle$,  then $x \notin \left(\displaystyle  \bigcup_{j=0}^{k+1} P_j \right) \cup \left(\displaystyle  \bigcup_{j=0, j\neq i}^{k+1} Q_j \right)$.
	\end{enumerate}
\end{proposition}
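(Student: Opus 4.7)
The plan is to reduce the disjointness assertion to a direct calculation on normal forms: every element of $U_{6n} \setminus \langle a \rangle$ admits a unique expression $a^m b^\epsilon$ with $0 \le m < 2n$ and $\epsilon \in \{1, 2\}$, and I will identify which $P_i$ or $Q_i$ it lies in from three invariants of this form, namely the parity of $m$, the $3$-adic valuation $v_3(m)$, and $\epsilon$.

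First I would dispatch the top index $i = k+1$ using Remarks \ref{odd-power-ab} and \ref{odd-power-ab^2}: an element of $P_{k+1} \setminus \langle a \rangle$ has $m$ odd and $\epsilon = 1$, while an element of $Q_{k+1} \setminus \langle a \rangle$ has $m$ odd and $\epsilon = 2$. By contrast, for $0 \le i \le k$, formula \eqref{u6neq1} applied to $(a^{2 \cdot 3^i} b)^j$ and $(a^{2 \cdot 3^i} b^2)^j$ with $j \not\equiv 0 \pmod{3}$ shows that every element of $(P_i \cup Q_i) \setminus \langle a \rangle$ has $m \equiv 2 \cdot 3^i j \pmod{2n}$, hence $m$ is even. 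This simultaneously separates $P_{k+1}$ from $Q_{k+1}$ (different $\epsilon$) and separates both of them from every $P_i, Q_i$ with $i \le k$ (different parity of $m$).

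For the remaining indices $0 \le i \le k$, I would then compute $v_3(m)$ for $x = a^m b^\epsilon$ in $(P_i \cup Q_i) \setminus \langle a \rangle$. Writing $m = 2 \cdot 3^i j + 2nq = 2 \cdot 3^i(j + 3^{k-i} t q)$ for some $q \in \mathbb{Z}$ and using $\gcd(t, 3) = 1$, the hypothesis $j \not\equiv 0 \pmod 3$ gives $v_3(m) = i$ when $i < k$ and $v_3(m) \ge k$ when $i = k$. Thus $v_3(m)$ pairwise distinguishes the sets $(P_i \cup Q_i) \setminus \langle a \rangle$ as $i$ varies in $\{0, \dots, k\}$. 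To finish, within $(P_i \cup Q_i) \setminus \langle a \rangle$ for $i < k$, the residue $j \bmod 3$ is recovered from $m$ via $j \equiv m / (2 \cdot 3^i) \pmod 3$, and then \eqref{u6neq1} forces $\epsilon = 1$ in $P_i$ exactly when $j \equiv 1$ and $\epsilon = 2$ exactly when $j \equiv 2$, with the opposite assignment in $Q_i$; this separates $P_i$ from $Q_i$. The case $i = k$ collapses to $P_k = Q_k$ and is outside the scope of the proposition.

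The main obstacle is the arithmetic bookkeeping in this $v_3$ computation, where one must ensure that reducing $2 \cdot 3^i j$ modulo $2n = 2 \cdot 3^k t$ does not distort the $3$-adic valuation. The identity $v_3\bigl(3^i(j + 3^{k-i} t q)\bigr) = i$ holds only because $j \not\equiv 0 \pmod 3$ while $3 \mid 3^{k-i} t q$ for $i < k$, a conclusion that crucially rests on $\gcd(t, 3) = 1$; once those divisibility conditions are carefully tracked, the various disjointness claims fall out immediately from the distinct invariant triples $(\text{parity of } m, v_3(m), \epsilon)$ attached to each $P_i \setminus \langle a \rangle$ and $Q_i \setminus \langle a \rangle$.
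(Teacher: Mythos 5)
Your proposal is correct and establishes everything this proposition is actually used for, but it organizes the argument differently from the paper. The paper's proof is an exhaustive pairwise comparison: assuming $x$ lies in a second subgroup, it writes the two representations $x=(a^{2\cdot3^{i}}b)^{p}=(a^{2\cdot3^{j}}b^{\pm1})^{q}$ and runs through the residues of $p,q$ modulo $3$ via \eqref{u6neq1}, reaching in each sub-case either $b\in\langle a\rangle$ (contradicting Remark \ref{order-U_6n}) or an exponent identity $2\cdot3^{i}p=2\cdot3^{j}q$ forcing $3\mid p$. Your invariant triple (parity of $m$, $v_3(m)$, $\epsilon$) packages exactly these computations once and for all: the parity and $\epsilon$ separations are the paper's ``$b\in\langle a\rangle$'' contradictions, and your $v_3$ computation is the paper's exponent comparison, carried out more robustly because you allow reduction modulo $2n$ (writing $m=2\cdot3^{i}(j+3^{k-i}tq)$ with $q\in\mathbb{Z}$ and using $\gcd(t,3)=1$), whereas the paper turns the group identity into an integer identity by bounding the exponents by $3^{k-i}t$ --- a bound taken from Lemma \ref{order-element-U_6n}(i), which is only stated for $i\le k-1$, so your version actually repairs a small gap when one of the indices equals $k$. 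What your route buys is the elimination of the repetitive sub-cases and a uniform treatment of all pairs; the computational core (\eqref{u6neq}, \eqref{u6neq1}, $3\nmid t$) is the same. Finally, your remark that the case $i=k$ ``collapses to $P_k=Q_k$'' is the right instinct: since $P_k=Q_k$, the literal statement of part (i) at $i=k$ (whose $Q$-union includes $j=k$) cannot hold, and the paper's Case 1 does not genuinely cover it either; the disjointness you prove --- all $P_i\setminus\langle a\rangle$ and $Q_i\setminus\langle a\rangle$ pairwise disjoint apart from the identification $P_k=Q_k$ --- is precisely what is needed for Lemmas \ref{degree-element-P_i} and \ref{degree-element-Q_i} and the subsequent results.
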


\begin{proof}
	\noindent
	(i) If possible, let $x \in U_{6n} {\setminus} P_i$. Then in view of \eqref{u6neq2}, we have the following cases.
	
	\smallskip
	\noindent
	{\bf Case 1.} $ x \in Q_i$.
	For $i = k+1$, we have $x \in \langle ab \rangle \cap \langle ab^2 \rangle$. By Remark \ref{odd-power-ab} and Remark \ref{odd-power-ab^2}, we have $x = a^{2r +1}b$ and $x = a^{2s +1}b^2$ for some $r,s$. Consequently, $b \in \langle a \rangle$, a contradiction of Remark \ref{order-U_6n}. Suppose $0 \leq i \leq k$. Then by \eqref{u6neq2},  $x = (a^{2\cdot 3^{i}} b)^p$ and  $x = (a^{2\cdot 3^{i}} b^2)^q$, where $p, q \leq 3^{k -i }t$ (see Lemma \ref{order-element-U_6n}(i)). Clearly, $3 \nmid p$ and $3 \nmid q$. Otherwise, $x \in \langle a \rangle$, a contradiction. If   $p \equiv1($mod $ 3)$ and $q\equiv 2($mod $3)$, then by \eqref{u6neq1},  $x = a^{{2\cdot 3^{i}\cdot p}}b$ and $x = a^{{2\cdot 3^{i}\cdot q}}b$. Since  $p, q \leq 3^{k -i }t$, we have  $2\cdot 3^i\cdot p \leq 2n$ and   $2\cdot 3^i\cdot q \leq 2n$. Consequently,   $a^{{2\cdot 3^i\cdot p}}  b =  a^{{2\cdot 3^i\cdot q}} b$ gives $p = q$, which is not  possible. Similarly, we get $p = q$ for the case   $p \equiv 2($mod $ 3)$ and  $q \equiv 1($mod $3)$, again a contradiction. For the case  $p \equiv1($mod $ 3)$ and $ q \equiv 1($mod $3)$, we have $x = a^{{2\cdot 3^{i}\cdot p}}b$ and $x = a^{{2\cdot 3^{i}\cdot q}}b^2$. Consequently, $b \in \langle a \rangle$, a contradiction. We get a similar contradiction for the case  $p \equiv2($mod $ 3)$ and $ q \equiv 2($mod $3)$.
	
	\smallskip
	\noindent		
	{\bf Case 2.}  $x \in P_j$ {\rm with} $j \neq i$.
	If  $j = k+1$, then by Remark \ref{odd-power-ab}, we have  $x = a^lb$ for some odd $l$. Since $j \neq i$, we get either  $x= a^{l'}b$ or $x= a^{l'}b^2$ for some even $l'$, a contradiction. Similarly, we get a contradiction when $i = k + 1$. Let $0 \leq i, j \leq k$. Then  $x = (a^{2\cdot 3^{i}} b)^u $ and $x = (a^{2\cdot 3^{j}} b)^v$ for some $u \leq 3^{k -i }t$ and $v\leq 3^{k -j }t$ (see Lemma \ref{order-element-U_6n} (i)). Clearly, $u, v$ are not divisible by $3$. Otherwise, $x \in \langle a \rangle$, a contradiction. If $u \equiv1($mod $ 3)$ and $v \equiv 2($mod $3)$, then by \eqref{u6neq1}, we get $x = a^{{2\cdot 3^{i}\cdot u}} b $ and $x = a^{{2\cdot 3^{j}\cdot v}} b^2$. Consequently, $b \in  \langle a \rangle$, a contradiction of Lemma \ref{order-U_6n}. Similarly, we get a contradiction if $u \equiv 2 ($mod $3)$ and  $ v \equiv 1($mod $3)$. For the case $u \equiv 1 ($mod $3)$ and $v \equiv 1 ($mod $3)$, we have  $x = a^{{2\cdot 3^{i}\cdot u}}b$ and $x = a^{{2\cdot 3^{j}\cdot v}}b$. Since $u \leq 3^{k -i }t$ and $v\leq 3^{k -j }t$, we have $2\cdot 3^i\cdot  u \leq 2n$ and $2\cdot 3^j \cdot v \leq 2n$. Consequently,   $a^{{2\cdot 3^i\cdot u}}  b =  a^{{2\cdot 3^j\cdot v}} b$ gives $2\cdot 3^i \cdot u = 2\cdot3^j\cdot v$. Without loss of generality, we assume that $i < j$. Now, we get $u  = 3^{j -i}v$ implies $3 \, | \, u$, a contradiction. Similarly, we  arrive at a contradiction if $u\equiv2$(mod $ 3)$ and $v\equiv2$(mod $ 3)$.
	
	\smallskip
	\noindent
	{\bf Case 3.}  $ x \in Q_j$ {\rm with} $j \neq i$.
	If $j = k + 1$, then by Remark \ref{odd-power-ab^2}, we get $x = a^{m}b^2 $ for some odd $m$. Since  $i \neq j$, we have either $x = a^{m'}b$ or $x = a^{m'}b^2$  for some even $m'$ which is not possible. Similarly, we have a contradiction if $i = k + 1$. So, we assume that $0\leq i, j \leq k$. Then $x = (a^{2\cdot 3^{i}} b)^{r'} $ and $x =  (a^{2\cdot 3^{j}} b^2)^{s'}$ for some $r' \leq 3^{k -i }t$ and $s'\leq 3^{k -j }t$ (see Lemma \ref{order-element-U_6n} (i)). Clearly, $r', s'$ are not divisible by $3$.  Otherwise, $x \in \langle a \rangle$, a contradiction. For the case   $r' \equiv 1($mod $3)$ and $s' \equiv 2($mod $3)$, we have $x = a^{2 \cdot 3^i\cdot r'}b$ and $x =a^{2 \cdot 3^j\cdot s'}b$. Since $r' \leq 3^{k -i }t$ and $s'\leq 3^{k -j }t$, we have $2\cdot 3^i\cdot  r' \leq 2n$ and $2\cdot 3^j \cdot s' \leq 2n$. Consequently   $a^{{2\cdot 3^i\cdot r'}}  b =  a^{{2\cdot 3^j\cdot s'}} b$ gives $2\cdot 3^i \cdot r' = 2\cdot3^j\cdot s'$. Without loss of generality, we assume that $i < j$. Now, we get $r'  = 3^{j -i}s'$ implies $3 \, | \, r'$, a contradiction. Similarly, we get $3 \, | \, r'$ if $r' \equiv2$(mod $ 3)$ and $s' \equiv1$(mod $ 3)$.  For   $r' \equiv 1 ($mod $3)$ and  $s' \equiv 1($mod $3)$, we have $x = a^{2\cdot 3^i \cdot r'}b $ and $x =  a^{2\cdot 3^j \cdot s'}b^2$. Consequently,  $b^2 \in \langle a \rangle$, again a contradiction. We get a similar contradiction when $r' \equiv 2 ($mod $3)$ and $s' \equiv 2($mod $3)$.
	
	\smallskip
	\noindent
	(ii) The proof  is similar to that of (i).
\end{proof}

As a consequence of Proposition \ref{Common-elementP_i-Q_i}, we have the following lemma.

\begin{lemma}\label{degree-element-P_i}
	Let $x \in P_i {\setminus} \langle a \rangle$, where $0 \leq i \leq k +1$. Then $x \sim y$ for any $y \in U_{6n}$ if and only if $y \in P_i$.
\end{lemma}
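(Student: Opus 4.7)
The plan is to exploit two structural facts about $U_{6n}$ that have already been set up: the covering $U_{6n} = \langle a \rangle \cup \bigcup_{j=0}^{k+1}(P_j \cup Q_j)$ from Lemma \ref{even-power-a-b}, and the disjointness-outside-$\langle a \rangle$ statement of Proposition \ref{Common-elementP_i-Q_i}. Each $P_j$ and $Q_j$ is cyclic by construction, so every cyclic subgroup of $U_{6n}$ avoiding $\langle a \rangle$ is forced, by these two results, to sit inside a \emph{unique} one of the $P_j$'s or $Q_j$'s.

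For the ``if'' direction: since $P_i$ is cyclic, any two of its elements are adjacent in $\mathcal{P}_e(U_{6n})$, so $y \in P_i$ immediately yields $x \sim y$.

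For the converse, suppose $x \sim y$. By definition of the enhanced power graph there exists $z \in U_{6n}$ with $x, y \in \langle z \rangle$. First I would rule out $z \in \langle a \rangle$: otherwise $\langle z \rangle \subseteq \langle a \rangle$ would force $x \in \langle a \rangle$, contradicting the hypothesis $x \in P_i \setminus \langle a \rangle$. Hence by Lemma \ref{even-power-a-b} we have $z \in P_j$ or $z \in Q_j$ for some $0 \leq j \leq k+1$, and so $\langle z \rangle$ is contained in the corresponding cyclic subgroup. In particular $x$ lies in that subgroup. Proposition \ref{Common-elementP_i-Q_i}(i), applied to $x \in P_i \setminus \langle a \rangle$, then forces this subgroup to be $P_i$ itself (any other $P_j$ with $j \neq i$ or any $Q_j$ is ruled out). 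Therefore $\langle z \rangle \subseteq P_i$, and in particular $y \in P_i$.

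There is no real obstacle to this argument beyond invoking the right statement at the right moment: the heavy structural work has already been done in Lemma \ref{even-power-a-b} and Proposition \ref{Common-elementP_i-Q_i}, and the only subtlety is the preliminary observation that $z \notin \langle a \rangle$, which is forced by $x \in P_i \setminus \langle a \rangle$. The proof is therefore essentially a two-line deduction from the two cited results, together with the trivial cyclic observation for the sufficiency direction.
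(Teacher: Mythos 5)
Your proof is correct and follows essentially the same route as the paper: both reduce to a common cyclic subgroup $\langle z\rangle$ containing $x$ and $y$, use the covering of $U_{6n}$ together with Proposition \ref{Common-elementP_i-Q_i}(i) to force $\langle z\rangle\subseteq P_i$, and invoke cyclicity of $P_i$ for the easy direction. Your explicit exclusion of $z\in\langle a\rangle$ is a point the paper leaves implicit, but the argument is the same.
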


\begin{proof}
	Suppose $x \sim y$ for some $ y \in U_{6n}{\setminus} P_i$. Then $x, y \in \langle z \rangle$ for some $z \in U_{6n}$. Note that $z \notin P_i$ and   so  $x \in U_{6n} {\setminus} P_i$, a contradiction of Proposition \ref{Common-elementP_i-Q_i} (i). Since $P_i$ is a cyclic subgroup and $x, y \in P_i$, the converse holds.
\end{proof}

The proof of the following lemma is also similar.

\begin{lemma}\label{degree-element-Q_i}
	Let $x \in Q_i {\setminus} \langle a \rangle$, where $0 \leq i \leq k +1$. Then $x \sim y$ for any $y \in U_{6n}$ if and only if $y \in Q_i$.
\end{lemma}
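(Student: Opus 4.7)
The plan is to mirror the proof of Lemma \ref{degree-element-P_i} verbatim, with $P_i$ replaced by $Q_i$ and part (i) of Proposition \ref{Common-elementP_i-Q_i} replaced by part (ii). The converse direction is immediate: if $y \in Q_i$, then both $x$ and $y$ lie in the cyclic subgroup $Q_i$, so $x \sim y$ in $\mathcal{P}_e(U_{6n})$ by the very definition of the enhanced power graph.

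For the forward direction, suppose $x \sim y$. Then there exists $z \in U_{6n}$ with $x, y \in \langle z \rangle$. Using the decomposition \eqref{u6neq2}, the element $z$ must lie in $\langle a \rangle$, in some $P_j$, or in some $Q_j$. Since each of these is cyclic, we obtain $\langle z \rangle \subseteq \langle a \rangle$, $\langle z \rangle \subseteq P_j$, or $\langle z \rangle \subseteq Q_j$ respectively, forcing $x$ into that subgroup as well. The first possibility is ruled out by the hypothesis $x \notin \langle a \rangle$, and the second is ruled out by Proposition \ref{Common-elementP_i-Q_i}(ii), which tells us that $x \in Q_i \setminus \langle a \rangle$ cannot lie in any $P_j$. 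In the remaining case $\langle z \rangle \subseteq Q_j$, Proposition \ref{Common-elementP_i-Q_i}(ii) forces $j = i$, since $x$ cannot lie in $Q_j$ for $j \neq i$. Therefore $\langle z \rangle \subseteq Q_i$, and in particular $y \in Q_i$, completing the argument.

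There is no real obstacle here: the lemma is a routine consequence of Proposition \ref{Common-elementP_i-Q_i}(ii) together with the covering identity \eqref{u6neq2}, and the proof is symmetric to that of Lemma \ref{degree-element-P_i}. The only minor point requiring care is ensuring that the cyclic subgroup $\langle z \rangle$ witnessing adjacency is actually contained in one of the listed cyclic subgroups $\langle a \rangle, P_j, Q_j$; this is handled by first locating $z$ itself via \eqref{u6neq2} and then using cyclicity of each listed subgroup to propagate the containment to $\langle z \rangle$.
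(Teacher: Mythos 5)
Your proof is correct and follows essentially the same route as the paper, which simply notes that the $Q_i$ case is proved like Lemma \ref{degree-element-P_i}: use the covering \eqref{u6neq2} to place the witness $z$ (hence $\langle z \rangle$, hence $x$) in $\langle a \rangle$, some $P_j$, or some $Q_j$, and then invoke Proposition \ref{Common-elementP_i-Q_i}(ii) together with $x \notin \langle a \rangle$ to force $\langle z \rangle \subseteq Q_i$. Your explicit handling of the containment $\langle z \rangle \subseteq$ (listed subgroup) is a slightly more careful write-up of the same argument.
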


The following proposition determines neighbourhoods of vertices of $\mathcal{P}_e(U_{6n})$.

\begin{proposition}\label{nbd}
	For the graph $\mathcal{P}_e(U_{6n})$, we have
	\begin{enumerate}[\rm(i)]
		\item $N[x] = P_i$ if and only if $x \in P_i {\setminus} \langle a \rangle$, where $0 \leq i \leq k+1$.
		\item $N[x] = Q_i$ if and only if $x \in Q_i {\setminus} \langle a \rangle$, where $0 \leq i \leq k+1$.
		\item $N[x] = \langle a \rangle$ if and only if $x = a^i$ for some odd $i$.
		\item $N[x] = U_{6n}$ if and only if $x \in P_k \cap \langle a\rangle = \langle a^{2\cdot 3^k}\rangle$.
		\item $N[x] = \bigcup\limits_{j = 0}^{i -1}\left( P_j \cup Q_j\right) \cup P_{k +1} \cup Q_{k +1}\cup \langle a \rangle$ if and only if  $x \in \langle a^{2\cdot 3^i} \rangle {\setminus} \langle a^{2\cdot 3^{i + 1}} \rangle$, where $0 \leq i \leq k-1$.
	\end{enumerate}
\end{proposition}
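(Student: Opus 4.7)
The plan is to determine, for each $x \in U_{6n}$, exactly which of the cyclic subgroups listed in \eqref{u6neq2} (namely $\langle a\rangle$ and the $P_j, Q_j$ for $0 \leq j \leq k+1$) contain $x$. By Lemmas \ref{degree-element-P_i} and \ref{degree-element-Q_i}, an element of $P_j \setminus \langle a\rangle$ (respectively of $Q_j\setminus \langle a\rangle$) is adjacent to $x$ precisely when $x \in P_j$ (respectively $x \in Q_j$), so $N[x]$ is the union of all subgroups from \eqref{u6neq2} that contain $x$.

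For parts (i) and (ii): if $x \in P_i \setminus \langle a\rangle$, then Proposition \ref{Common-elementP_i-Q_i}(i) shows $x$ lies in no other $P_j$, in no $Q_j$, and not in $\langle a\rangle$, so $N[x] = P_i$. Part (ii) is symmetric. The converses follow since $x \in N[x]$ and the constituents of \eqref{u6neq2} are distinguishable: if $N[x] = P_i$ then $x \in P_i$, and $x \notin \langle a\rangle$, since otherwise $\langle a\rangle \subseteq N[x] = P_i$, contradicting $\langle a\rangle \not\subseteq P_i$.

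For (iii)--(v), assume $x \in \langle a\rangle$, so $x$ is already adjacent to every element of $\langle a\rangle$. Write $x = a^m$. By Lemma \ref{P_i-intersection-generated-a}, for $0 \leq j \leq k-1$ we have $P_j \cap \langle a\rangle = Q_j\cap\langle a\rangle = \langle a^{2\cdot 3^{j+1}}\rangle$, while $P_k\cap\langle a\rangle = Q_k\cap\langle a\rangle = \langle a^{2\cdot 3^k}\rangle$; from \eqref{u6neq}, $P_{k+1}\cap\langle a\rangle = Q_{k+1}\cap\langle a\rangle = \langle a^2\rangle$. So membership of $x$ in each $P_j, Q_j$ reduces to a divisibility condition on $m$ modulo $2n$, and three mutually exclusive cases partition $\langle a\rangle$. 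In case (a), $m$ is odd, whence $x$ lies in no $P_j$ or $Q_j$ and $N[x] = \langle a\rangle$, giving (iii). In case (b), $x \in \langle a^{2\cdot 3^k}\rangle$, so $x$ lies in every $P_j$ and every $Q_j$, and by Lemma \ref{even-power-a-b} this forces $N[x] = U_{6n}$, giving (iv). In case (c), $x \in \langle a^{2\cdot 3^i}\rangle \setminus \langle a^{2\cdot 3^{i+1}}\rangle$ for some unique $0 \leq i \leq k-1$; writing $m = 2\cdot 3^i s$ with $\gcd(s,3)=1$, the condition $2\cdot 3^{j+1}\mid m$ holds exactly when $j \leq i-1$, so $x$ lies in each of $P_0,\ldots,P_{i-1},Q_0,\ldots,Q_{i-1}$ and (since $2\mid m$) in $P_{k+1}, Q_{k+1}$, and in no other $P_j, Q_j$, yielding the union in (v).

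The converse implications in (iii)--(v) are immediate because the five prescribed right-hand sides are pairwise distinct subsets of $U_{6n}$; within family (v) the index $i$ is uniquely recovered from $N[x]$, for instance as the least $j$ such that $P_j \not\subseteq N[x]$. The main bookkeeping obstacle is the divisibility analysis in case (c); everything else is a direct application of the structural lemmas already established in this subsection.
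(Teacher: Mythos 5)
Your proposal is correct and follows essentially the same route as the paper: it rests on the decomposition \eqref{u6neq2} together with Lemmas \ref{degree-element-P_i} and \ref{degree-element-Q_i}, Proposition \ref{Common-elementP_i-Q_i} and Lemma \ref{P_i-intersection-generated-a}, which are exactly the ingredients the paper uses. The only difference is organizational: you compute $N[x]$ over a partition of $U_{6n}$ and deduce all converse implications at once from the pairwise distinctness of the prescribed right-hand sides (note only that $P_k=Q_k$ makes the cases (i) and (ii) coincide there, which is harmless), whereas the paper argues each converse directly.
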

\begin{proof}
	
	(i) If $x \in P_i {\setminus} \langle a \rangle$, then by Corollary \ref{degree-element-P_i}, we have $N[x] = P_i$. Conversely, suppose that $x \notin P_i {\setminus} \langle a \rangle$. Then either $x \in P_i \cap \langle a \rangle$ or $x \in U_{6n} {\setminus} P_i$. If  $x \in P_i \cap \langle a \rangle$, then clearly $x \sim a$. But $a \notin P_i$, so that $N[x]  \neq P_i$, a contradiction. On the other hand, if $ x \in U_{6n} {\setminus} P_i$, then by Corollary \ref{degree-element-P_i}, we have $N[x] \neq P_i$, again a contradiction. This proves (i).
	
	\smallskip
	\noindent
	(ii) The proof  is similar to that of (i).
	
	\smallskip
	\noindent
	(iii) Let  $x = a^i$ for some odd $i$. By \eqref{u6neq}, \eqref{u6neq1} and \eqref{u6neq2}, we have $a^i \notin P_j \cup Q_j$ for all $0 \leq j \leq k+1$.  Consequently, $a^i \sim x$ if and only if $x \in \langle a \rangle$. Thus $N[x] = \langle a \rangle$.
	
	Now let $x \in U_{6n}$ be such that $N[x] = \langle a \rangle$. If $x = a^{2i}$ for some $i$, then by \eqref{u6neq}, $x \in \langle ab \rangle$. This implies $x \sim ab$, which is a contradiction. Moreover, from  (i) and  (ii), we have $x \notin (P_i \cup Q_i) {\setminus} \langle a \rangle$. Thus $x = a^i$ for some odd $i$.
	
	\smallskip
	\noindent
	(iv) Let  $x \in P_k \cap \langle a \rangle =  \langle a^{2\cdot 3^k} \rangle$. By  Remark \ref{a^{23^i+1}-a^{23^i}}, $\langle a^{2\cdot 3^k} \rangle \subseteq \langle a^{2\cdot 3^{i}} \rangle$ for all $0 \leq i \leq k-1$.  By Lemma \ref{P_i-intersection-generated-a}, since $P_i \cap \langle a  \rangle = \langle a^{2\cdot 3^{i +1}} \rangle = Q_i \cap \langle a \rangle$, we have $a^{2\cdot 3^k} \in P_i$ $  \cap$ $  Q_i$ for all $ 0 \leq i \leq k$.  Accordingly, $x\in P_i \cap Q_i$ for any $0 \leq i \leq k$. Since $P_{k +1} \cap \langle a \rangle =   \langle a^2 \rangle =  Q_{k +1} \cap \langle a \rangle $, we get $x \in P_{k +1} \cap Q_{k+1}$. Thus $x \in \langle a \rangle\cap \left( P_i \cap Q_i\right)$ for any $i$, $0 \leq i \leq k+1$. For any  $y \in U_{6n}$,  by \eqref{u6neq2}, we have $x \sim y$.  Consequently, $x$ is a dominating vertex of $\mathcal{P}_e(U_{6n})$.
	
	Conversely, suppose $x$ is a dominating vertex. Then $x$ is adjacent with every element of $P_i$ and  $ Q_i$ for all $0 \leq i \leq k+1$. Consequently, $x \in P_i \cap Q_i \cap \langle a \rangle = \langle a^{2\cdot 3^{i +1}} \rangle$ for all $0 \leq i \leq k$. Hence  $x \in P_k \cap \langle a \rangle =  \langle a^{2\cdot 3^k} \rangle$.

	\smallskip
	\noindent
	(v) Let $x \in \langle a^{2\cdot 3^i} \rangle {\setminus} \langle a^{2\cdot 3^{i + 1}} \rangle$. Clearly, $i < k$.
	We prove that $$x \sim y \; \text{ if and only if} \; y \in \bigcup\limits_{j = 0}^{i -1}\left( P_j \cup Q_j\right) \cup P_{k +1} \cup Q_{k +1}\cup \langle a \rangle.  $$
	
	In order to prove this, by \eqref{u6neq}, we have $x \in P_{k +1} \cap Q_{k+1}$. Clearly, $x \in \langle a \rangle$. By Lemma \ref{P_i-intersection-generated-a}, we have  $P_{j} \cap \langle a \rangle =\langle  a^{2\cdot3^{j + 1}} \rangle$. Moreover, by  Remark \ref{a^{23^i+1}-a^{23^i}}, $x \in \langle a^{2\cdot 3^{j + 1}} \rangle$ for all $0 \leq j \leq i -1$. Thus it follows that $x \in P_j$. Similarly, one can observe $x \in Q_j$. Consequently,  for any $y \in \bigcup\limits_{j = 0}^{i -1}\left( P_j \cup Q_j\right) \cup P_{k +1} \cup Q_{k +1}\cup \langle a \rangle$, we have $y \sim x$. If possible, let $y \notin  \bigcup\limits_{j = 0}^{i -1}\left( P_j \cup Q_j\right) \cup P_{k +1} \cup Q_{k +1} \cup \langle a \rangle $.  Then in view of \eqref{u6neq2}, we have either $y \in P_j {\setminus} \langle a \rangle$ or $y \in Q_j {\setminus} \langle a \rangle$ for some $j$, where $i \leq j \leq k$. If $ x\sim y$, then by Corollary \ref{degree-element-P_i} and Corollary \ref{degree-element-Q_i}, we have either $x \in P_j$ or $x \in Q_j$. If $x \in P_j$, then clearly $ x \in P_j \cap \langle a \rangle = \langle  a^{2\cdot3^{j + 1}} \rangle$. Since $j \geq i$, by Remark \ref{a^{23^i+1}-a^{23^i}}, we get $x \in \langle  a^{2\cdot3^{i + 1}} \rangle$, a contradiction.  Similarly, we get a contradiction when $ x\in Q_j$. Consequently, $x \nsim y$. Hence if $x \sim y$, then $y \in \bigcup\limits_{j = 0}^{i -1}\left( P_j \cup Q_j\right) \cup P_{k +1} \cup Q_{k +1}\cup \langle a \rangle$.
\end{proof}
Applying preceding results of this subsection, we now show that the enhanced power graph of $U_{6n}$ is prefect.

\begin{theorem}
	The enhanced power graph  $\mathcal{P}_e(U_{6n})$ is perfect.
\end{theorem}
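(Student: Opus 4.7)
The plan is to invoke the Strong Perfect Graph Theorem (Theorem \ref{strongperfecttheorem}) and show that $\mathcal{P}_e(U_{6n})$ contains neither an induced hole nor an induced antihole of (odd) length at least $5$. The key input is Proposition \ref{nbd}(i)--(ii): for every $x \in U_{6n} \setminus \langle a \rangle$, the closed neighbourhood $N[x]$ equals either $P_i$ or $Q_i$ for some $i$, and since each $P_i$, $Q_i$ is cyclic, $N[x]$ induces a clique in $\mathcal{P}_e(U_{6n})$. The vertices of $\langle a \rangle$ themselves form a clique, because $\langle a \rangle$ is cyclic.

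To dispose of holes, I would suppose $\Gamma'$ is an induced cycle of length at least $5$. If some vertex $x$ of $\Gamma'$ lies outside $\langle a \rangle$, the two neighbours of $x$ along $\Gamma'$ both belong to the clique $N[x]$ and hence are adjacent in $\mathcal{P}_e(U_{6n})$; but two vertices at distance $2$ along an induced cycle of length $\geq 5$ cannot be adjacent, a contradiction. Otherwise every vertex of $\Gamma'$ lies in $\langle a \rangle$, so $\Gamma'$ sits inside a clique, which contains no induced cycle of length $\geq 4$.

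The antihole case is parallel. Suppose $\Gamma'$ is an induced antihole of length $n \geq 5$, and label its vertices $v_1, \ldots, v_n$ cyclically so that $v_i \not\sim v_j$ precisely when $j \equiv i \pm 1 \pmod n$. If some $v_j \notin \langle a \rangle$, then the $n-3$ neighbours of $v_j$ in $\Gamma'$, namely $v_{j+2}, v_{j+3}, \ldots, v_{j-2}$, all lie in the clique $N[v_j]$; in particular $v_{j+2}$ and $v_{j+3}$ are adjacent in $\mathcal{P}_e(U_{6n})$, contradicting that they are consecutive in the antihole labelling. If instead every $v_j$ lies in $\langle a \rangle$, the vertices of $\Gamma'$ again induce a clique, forbidding the non-edges demanded by the antihole.

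There is no real obstacle: the whole argument is driven by the clean description of closed neighbourhoods in Proposition \ref{nbd}, so the bulk of the technical work has already been carried out in the preceding structural analysis of $U_{6n}$.
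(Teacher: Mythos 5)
Your proposal is correct and follows essentially the same route as the paper: both invoke the Strong Perfect Graph Theorem and use the fact (Lemmas \ref{degree-element-P_i} and \ref{degree-element-Q_i}, packaged in Proposition \ref{nbd}(i)--(ii)) that every vertex outside $\langle a \rangle$ has closed neighbourhood equal to some cyclic subgroup $P_i$ or $Q_i$, hence a clique, while $\langle a \rangle$ itself is a clique. Your case split (some vertex outside $\langle a \rangle$ versus all vertices inside) is a slightly cleaner organization of the paper's two cases, but the underlying argument is the same.
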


\begin{proof}
	In view of Theorem \ref{strongperfecttheorem}, it is enough to show that $\mathcal{P}_e(U_{6n})$ does not contain a hole or antihole of odd length greater than or equal to five. First suppose that $\mathcal{P}_e(U_{6n})$ contains a hole $C$ given by $x_1 \sim x_2\sim \dots \sim  x_{l} \sim x_1$, where $l \geq 5$. Then we have the following two cases and each of them ends up contradicting the fact that $C$ is a hole.
	
	\smallskip
	\noindent
	{\bf Case 1.}  $x_i \notin \langle a \rangle$ for all $i$. In view of  \eqref{u6neq2}, we obtain  either  $x_1 \in P_j {\setminus} \langle a \rangle$  or $x_1 \in Q_j {\setminus} \langle a \rangle$ for some $j$. Without loss of generality, we suppose that $ x_1 \in P_j {\setminus} \langle a \rangle$ for some $j$.  Since $x_{l}  \sim x_1 \sim x_2$, by Lemma \ref{degree-element-P_i}, we have  $x_{l}, x_2 \in P_j$. Consequently, $x_{l} \sim x_2$.
	
	\smallskip
	\noindent
	{\bf Case 2.}   $x_i \in \langle a \rangle$ for some $i$. Without loss of generality, we assume that $x_1 \in \langle a \rangle$. Since $x_4$ is not adjacent with $x_1$ so by \eqref{u6neq2}, either  $x_4 \in P_j {\setminus} \langle a \rangle$ or $x_4 \in Q_j {\setminus} \langle a \rangle$ for some $j$. Also, $x_3 \sim x_4 \sim x_5$, by Lemma \ref{degree-element-P_i} and Lemma \ref{degree-element-Q_i}, we have  either  $x_3, x_5 \in P_j$ or $x_3, x_5 \in Q_j$. We obtain $x_3 \sim x_5$.
	
	Now suppose $C'$ is an antihole of length at least $5$ in $\mathcal{P}_e(U_{6n})$, that is, we have a hole $y_1 \sim y_2 \sim \dots \sim y_{l} \sim y_1$, where $l \geq 5$, in $\overline{\mathcal{P}_e(U_{6n})}$. Then again we arrive at contradiction in each of the following cases.
	
	\smallskip
	\noindent
	{\bf Case 1.}  $y_i \notin \langle a \rangle$ for all $i$. Since $y_1 \notin \langle a \rangle$, by \eqref{u6neq2}, either  $y_1 \in P_j {\setminus} \langle a \rangle$  or $y_1 \in Q_j {\setminus} \langle a \rangle$ for some $j$.  Since $y_1 \sim y_3$ and $ y_1 \sim y_4 $ in $\mathcal{P}_e(U_{6n})$. By Lemma \ref{degree-element-P_i} and Lemma \ref{degree-element-Q_i}, we get either $y_3, y_4 \in P_j$ or $y_3, y_4 \in Q_j$. Thus we have  $y_3 \sim y_4$ in $\mathcal{P}_e(U_{6n})$.
	
	\smallskip
	\noindent
	{\bf Case 2.}  $y_i \in \langle a \rangle$ for some $i$. Without loss of generality, we assume that $y_1 \in \langle a \rangle$. Notice that we have $y_1 \nsim y_2$ in $\mathcal{P}_e(U_{6n})$. Consequently, either $y_2 \in P_j {\setminus} \langle a \rangle$  or $y_2 \in Q_j {\setminus} \langle a \rangle$. Moreover, $y_2 \sim y_4$ and  $y_2 \sim y_5$ in $\mathcal{P}_e(U_{6n})$, as $C'$ is an antihole in $\mathcal{P}_e(U_{6n})$. Thus either  $y_4, \;  y_5  \in P_j$ or $ y_4,  y_5 \in Q_j$. As a result,  $x _4 \sim y_5$ in $\mathcal{P}_e(U_{6n})$.
\end{proof}

In the following theorem, we compute various graph invariants under consideration for $\mathcal{P}_e(U_{6n})$.

\begin{theorem} For $n\geq 1$, the following hold:
	\begin{enumerate}[\rm(i)]
		\item The minimum degree of  $\mathcal{P}_e(U_{6n})$ is
		$$\delta(\mathcal{P}_e(U_{6n})) = \left\{ \begin{array}{ll}
		2t -1 & \mbox{if $k = 0$},\\
		3t -1 & \mbox{if $k > 0$.}\end{array}
		\right.$$
		\item The independence number of $\mathcal{P}_e(U_{6n})$ is  $2k + 4$.
		\item For $n > 1$, the matching number   $\alpha'(\mathcal{P}_e(U_{6n}))$  is $3n$  and $\alpha'(\mathcal{P}_e(U_{6})) = 2$.
		\item The strong metric dimension of $\mathcal{P}_e(U_{6n})$ is $ 6n - k - 2$.
	\end{enumerate}
\end{theorem}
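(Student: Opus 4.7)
The plan is to deduce all four invariants from the structural analysis of $U_{6n}$ already in place. A preliminary observation is that the cyclic subgroups $\langle a\rangle, P_0, \ldots, P_{k+1}, Q_0, \ldots, Q_{k+1}$ (with $P_k = Q_k$) are precisely the maximal cyclic subgroups of $U_{6n}$: by Lemma \ref{even-power-a-b} every element, and hence every cyclic subgroup, lies in one of them, and each listed subgroup contains an element, such as $a^{2\cdot 3^i}b$ for $P_i$, that by Proposition \ref{Common-elementP_i-Q_i} lies in no other subgroup on the list; hence any cyclic supergroup must coincide with the original. Counting distinct entries gives $2k + 4$ maximal cyclic subgroups, which by Theorem \ref{indep} settles part (ii). For part (i), Theorem \ref{mindeg} reduces to minimizing these orders: from Lemma \ref{order-element-U_6n} and $|\langle a\rangle|=2n$, the minimum is $2n = 2t$ when $k = 0$ (since $|P_0| = 3t > 2t$) and $|P_k| = 3t$ when $k \geq 1$ (strictly smaller than $|P_i| = 3^{k-i}t$ for $i < k$ and than $2n$).

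For part (iii), when $n = 1$ we have $U_6 \cong S_3$; its enhanced power graph has $e$ as a universal vertex and only one additional edge $b \sim b^2$, so $\alpha' = 2$ follows from direct inspection. For $n > 1$, Theorem \ref{matching} gives the upper bound $\alpha' \leq 3n$, and it suffices to construct a perfect matching. By Proposition \ref{Common-elementP_i-Q_i} one has the disjoint decomposition
\[
U_{6n} = \langle a\rangle \,\sqcup\, \bigsqcup_{i=0}^{k+1}\bigl((P_i\setminus\langle a\rangle)\sqcup(Q_i\setminus\langle a\rangle)\bigr),
\]
(with $P_k = Q_k$ collapsed into a single block), and every block lies inside a maximal cyclic subgroup, hence inside a clique of $\mathcal{P}_e(U_{6n})$. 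Lemma \ref{order-element-U_6n} shows all blocks have even size except possibly $P_{k+1}\setminus\langle a\rangle$ and $Q_{k+1}\setminus\langle a\rangle$, each of size $n$. If $n$ is even, match internally within each block. If $n$ is odd (so $n \geq 3$), leave one leftover $x \in P_{k+1}\setminus\langle a\rangle$ and one $y \in Q_{k+1}\setminus\langle a\rangle$; since $\langle a^2\rangle \subseteq P_{k+1}\cap Q_{k+1}$ has $n \geq 3$ elements, pair $x$ with some $a^{2r} \in \langle a^2\rangle$ and $y$ with a distinct $a^{2s} \in \langle a^2\rangle$, then match the remaining $2n - 2$ vertices of $\langle a\rangle$ within the clique $\langle a\rangle$. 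This furnishes the required perfect matching.

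For part (iv), since $e$ is a universal vertex the diameter of $\mathcal{P}_e(U_{6n})$ equals $2$, so Theorem \ref{strong-metric-dim} yields $\operatorname{sdim}(\mathcal{P}_e(U_{6n})) = 6n - \omega(\widehat{\mathcal{P}}_e(U_{6n}))$. Proposition \ref{nbd} classifies the $\equiv$-classes into: the dominating class $D = \langle a^{2\cdot 3^k}\rangle$; the odd-powers class $A_0 = \langle a\rangle \setminus \langle a^2\rangle$; the intermediate classes $B_i = \langle a^{2\cdot 3^i}\rangle \setminus \langle a^{2\cdot 3^{i+1}}\rangle$ for $0 \leq i \leq k-1$; and the off-$\langle a\rangle$ classes $\widehat{P_i} = P_i\setminus\langle a\rangle$ and $\widehat{Q_i} = Q_i\setminus\langle a\rangle$ (with $\widehat{P_k} = \widehat{Q_k}$). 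The $k + 2$ chain classes $\{A_0, B_0, \ldots, B_{k-1}, D\}$ all lie inside the clique $\langle a\rangle$ and so form a clique in $\widehat{\mathcal{P}}_e(U_{6n})$. Conversely, by Lemmas \ref{degree-element-P_i} and \ref{degree-element-Q_i} any clique in $\widehat{\mathcal{P}}_e(U_{6n})$ contains at most one class of the form $\widehat{P_i}$ or $\widehat{Q_j}$, and translating $P_i \cap \langle a\rangle$ into chain classes via Lemma \ref{P_i-intersection-generated-a} shows that a clique containing $\widehat{P_i}$ (or $\widehat{Q_j}$) has size at most $k + 2$, with equality realized only when $i = k+1$. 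Hence $\omega(\widehat{\mathcal{P}}_e(U_{6n})) = k + 2$ and $\operatorname{sdim}(\mathcal{P}_e(U_{6n})) = 6n - k - 2$.

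The main obstacle is the parity handling in part (iii) when $n$ is odd; everything else amounts to bookkeeping governed by Propositions \ref{Common-elementP_i-Q_i} and \ref{nbd}.
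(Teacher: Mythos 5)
Your proposal is correct, and for parts (iii) and (iv) it follows essentially the same path as the paper: the same even-sized clique partition with the same parity repair when $n$ is odd (the paper moves $e$ and $a^2$ into $P_{k+1}\setminus\langle a\rangle$ and $Q_{k+1}\setminus\langle a\rangle$, you instead export one leftover from each of those parts into $\langle a^2\rangle$ --- equivalent constructions), and the same identification of the $\equiv$-classes via Proposition \ref{nbd} with the chain $\{A_0,B_0,\dots,B_{k-1},D\}$ playing the role of the paper's $V_2$; your bound $\omega(\widehat{\mathcal{P}}_e(U_{6n}))\le k+2$ is obtained by bounding cliques through $P_i\cap\langle a\rangle$ rather than the paper's counting contradiction, but the content is the same. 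Where you genuinely diverge is in (i) and (ii): the paper computes the degree of every vertex type from Proposition \ref{nbd} and exhibits an explicit maximum independent set, whereas you first pin down the complete list of maximal cyclic subgroups ($\langle a\rangle$, the $P_i$, the $Q_i$, with $P_k=Q_k$) and then invoke the general Theorems \ref{mindeg} and \ref{indep}; this is cleaner and makes one structural fact do double duty, at the cost of having to verify maximality and distinctness of the listed subgroups. Two small imprecisions there, neither fatal: maximality of $\langle a\rangle$ is not literally covered by Proposition \ref{Common-elementP_i-Q_i} (which only treats elements outside $\langle a\rangle$); you need instead that $a$ lies in no $P_j$ or $Q_j$, which follows from Lemma \ref{P_i-intersection-generated-a} (all intersections with $\langle a\rangle$ sit inside $\langle a^2\rangle$) or from Proposition \ref{nbd}(iii). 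Also, for $k\ge 1$ the order $|P_k|=3t$ is not strictly smaller than $|P_{k-1}|=3^{k-(k-1)}t=3t$; the two are equal, so the minimum over the list is still $3t$ and your conclusion $\delta=3t-1$ stands, but the parenthetical strictness claim should be dropped.
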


\begin{proof}
		\noindent
		(i) By Proposition \ref{nbd}, we have
			\begin{enumerate}[\rm(a)]
				\item   $\deg(x) = 2n - 1$ for all $x \in (P_{k + 1} \cup Q_{k + 1}) {\setminus} \langle a \rangle$.
				\item $\deg(x) =3^{k -i}t - 1$   for all $x \in (P_{i} \cup Q_{i}) {\setminus} \langle a \rangle$, where $0 \leq i \leq k-1$.
				\item $\deg(x) =3t - 1$ for all $x \in P_k {\setminus} \langle a \rangle$.
				\item $\deg(a^{i}) = 2n - 1$,  where $i$ is {\rm odd}.
				\item $\deg(x) = 6n -1$,  where $x \in P_k \cap  \langle a\rangle$.
			\end{enumerate}
		
		Moreover, for $0 \leq i \leq k-1$ and $x \in \langle a^{2\cdot 3^i} \rangle {\setminus} \langle a^{2\cdot 3^{i + 1}} \rangle$, we get
		$$ N[x] = \bigcup\limits_{j = 0}^{i -1}\left( P_j \cup Q_j\right) \cup P_{k +1} \cup Q_{k +1}\cup \langle a \rangle.$$ 	
		As a result,
				\begin{align*}
				 \deg(x)& =  |\langle a \rangle|+ \displaystyle{\sum_{j = 0}^{ i-1}}(|P_j {\setminus} \langle a\rangle| + |Q_j{\setminus} \langle a\rangle|) + |P_{k +1} {\setminus} \langle a\rangle| + |Q_{k +1}{\setminus} \langle a\rangle| - 1\\
				& =  4n + \displaystyle{\sum_{j = 0}^{ i-1}2|P_j {\setminus} \langle a \rangle|} -1\\
				& =  4n + 2t3^{k -i}(3^{i} - 1) - 1.
				\end{align*}
		
		Let $x \in U_{6n}$. If $k = 0$, then clearly $n = t$. Then in view of \eqref{u6neq2} and from above,  $\deg(x) \in \{2n -1, 3n - 1, 6n -1\}$. Thus $\delta(\mathcal{P}_e(U_{6n})) = 2n -1 = 2t -1$.
		
		 Now take $k > 0$. From above we observe that $\deg(x) \geq 3t - 1$, and when $x \in P_k {\setminus} \langle a \rangle$, we have $\deg(x) =3t - 1$. Hence $\delta(\mathcal{P}_e(U_{6n})) = 3t -1$.
		
		\smallskip
		\noindent
		(ii) Consider the set $I = \{a, ab, ab^2,      a^{2\cdot 3^k}b  \} \cup \{ a^{2\cdot 3^i}b :   0 \leq i \leq k- 1  \} \cup \{a^{2\cdot 3^i}b^2  :   0 \leq i \leq k- 1  \}$. Then by Lemma \ref{degree-element-P_i} and Lemma \ref{degree-element-Q_i},  $I$ is an independent set of size $2k + 4$. If there exists another independent set $I'$ such that  $|I'| > 2k + 4$, then there exist $x, y \in I'$ with the following possibilities: (a) $x, y \in P_i$ for some $0 \leq i \leq k+1$, or (b) $x, y \in Q_j$ for some $0 \leq j \leq k+1$, or (c) $x, y \in \langle a \rangle$. For each case, we have  $x \sim y$, which is a contradiction. So that $\alpha(\mathcal{P}_e(U_{6n})) = 2k +4$.
		
		\smallskip
		\noindent
		(iii) The result is straightforward for $n = 1$. Now let $n > 1$. In order to prove that $\alpha'(\mathcal{P}_e(U_{6n}))$  is $3n$, we provide a partition of $V(\mathcal{P}_e(U_{6n}))$ into subsets of even size such that the subgraph induced by each subset is complete. Note that \eqref{u6neq2} can be written as
		$$U_{6n} = \langle a \rangle \cup \bigcup_{i = 0}^{k-1}\left(P_i {\setminus} \langle a \rangle \cup Q_i {\setminus} \langle a \rangle \right)\cup \left(P_k {\setminus} \langle a \rangle\right) \cup \left(P_{k+1} {\setminus} \langle a \rangle\right) \cup \left(Q_{k+1} {\setminus} \langle a \rangle\right).$$
		
		These sets on the the right hand side of above expression forms a partition of $V(\mathcal{P}_e(U_{6n}))$. For $0 \leq i \leq k$, the subsets $P_i  {\setminus} \langle a \rangle$ and $Q_i {\setminus} \langle a \rangle$ are of even cardinality (cf. Lemma \ref{order-element-U_6n} (i)). If $n$ is even, then by Lemma \ref{order-element-U_6n}, the subsets  $P_{k+1} {\setminus} \langle a \rangle$ and  $Q_{k+1} {\setminus} \langle a \rangle$ are  of even size. Also, the subgraph induced by each subset of the given partition is complete. Consequently, $\alpha'(\mathcal{P}_e(U_{6n})) = 3n$.
		
		Also, notice that the subsets on the right hand side of the following expression
		\begin{align*}
			 U_{6n} = & \left( \langle a \rangle {\setminus} \{e, a^2\} \right) \cup \bigcup\limits_{i = 0}^{k-1}\left(P_i {\setminus} \langle a \rangle \cup Q_i {\setminus} \langle a \rangle \right)\\
			& \cup \left( P_k {\setminus} \langle a \rangle\right) \cup \left(\left( P_{k+1}{\setminus} \langle a \rangle\right)  \cup \{e\}\right)  \cup \left( \left(Q_{k+1} {\setminus} \langle a \rangle \right)\cup \{a^2\}\right)
		\end{align*}
		forms a partition of $V(\mathcal{P}_e(U_{6n}))$. If $n$ is odd, then by Lemma \ref{order-element-U_6n}, size of each subset of this partition is even. Thus we have $\alpha'(\mathcal{P}_e(U_{6n})) = 3n$.
		
		\smallskip
		\noindent
		(iv)		We denote
		$$ V_1 = \{ \widehat{ab}, \widehat{ab^2}\} \cup \{  \widehat{a^{2\cdot 3^i}b}  :  0 \leq i \leq k \} \cup \{ \widehat{a^{2\cdot3^i}b^2}  :  0 \leq i \leq k-1\} $$
		and
		 $$ V_2 = \{\widehat{e}, \widehat{a} \} \cup \{ \widehat{a^{2\cdot 3^j}}  : 0 \leq j \leq k-1 \}.$$
		 	Then in view of the properties of $U_{6n}$ derived earlier and Proposition \ref{nbd}, $\widehat{U_{6n}} = V_1 \cup V_2$.
		  For any $\widehat{x} \in V_2$, we have $x  \in \langle a \rangle$. So that $V_2$  is a clique in $\widehat{\mathcal{P}}_e (U_{6n})$, and thus  $\omega(\widehat{\mathcal{P}}_e (\mathcal{R}_{U_{6n}})) \geq  k + 2$.
		  If possible, let $C$ be another clique in $\widehat{\mathcal{P}}_e ({U_{6n}})$ with $|C| > k +2$.

		   By Lemma \ref{degree-element-P_i} and Lemma \ref{degree-element-Q_i}, $x \not\sim y$ for every pair of distinct elements $\widehat{x}$, $\widehat{y}$ in $V_1$. Accordingly, $V_1$ is an independent set in $\widehat{\mathcal{P}}_e ({U_{6n}})$. Then $|V_1 \cap C| \leq 1$, so that $V_2 \subset C$. In fact, comparing the cardinalities, $|V_1 \cap C| = 1$. We denote $V_1 \cap C  = \{\widehat{x}\}$.
		   Consequently,  $x \in P_i {\setminus} \langle a \rangle$ or $x \in Q_i {\setminus} \langle a \rangle$ for some $0 \leq i \leq k +1$.
		    Moreover, $\widehat{a} \in C$, so that $x \sim a$ in $\mathcal{P}_e (U_{6n})$.
		   Then by Lemma \ref{degree-element-P_i} and Lemma \ref{degree-element-Q_i}, $a \in P_i$ or $a \in Q_i$. Since this is a contradiction, we get $\omega(\widehat{\mathcal{P}}_e (U_{6n})) = k + 2$.  Hence $\operatorname{sdim}(\mathcal{P}_e(U_{6n})) = 6n - k - 2$,  by Theorem \ref{strong-metric-dim}.
\end{proof}

\bigskip
\subsection{Dihedral group.}\hfill
\smallskip

 For $n \geq 3$, the \emph{dihederal group} $D_{2n}$ of order $2n$ is given by the presentation $$D_{2n} = \langle a, b \; : \; a^{n} = b^2 = e, \; ab = ba^{-1} \rangle.$$

It is known that every element of $D_{2n} {\setminus} \langle a \rangle$ is of the form $a^ib$ for some $0 \leq i \leq n-1$, and that $ \langle a^ib \rangle =   \{ e, a^ib \}  $. In particular, $$D_{2n} = \langle a \rangle \cup \bigcup\limits_{ i=  0}^{n-1} \langle a^ib \rangle.$$

\begin{theorem}
	The enhanced power graph of $D_{2n}$ is perfect.
\end{theorem}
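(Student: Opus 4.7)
The plan is to apply the Strong Perfect Graph Theorem (Theorem \ref{strongperfecttheorem}) and show that $\mathcal{P}_e(D_{2n})$ admits neither an odd hole nor an odd antihole of length at least $5$. The key observation that drives both cases is that for every reflection $a^i b \in D_{2n}\setminus\langle a\rangle$, the cyclic subgroup $\langle a^i b\rangle = \{e, a^i b\}$ has order $2$, and any cyclic subgroup of $D_{2n}$ containing $a^i b$ must equal $\langle a^i b\rangle$ (since a cyclic subgroup of $D_{2n}$ is either contained in $\langle a\rangle$ or has the form $\{e, a^j b\}$). Consequently, in $\mathcal{P}_e(D_{2n})$ the unique neighbour of $a^i b$ is $e$, so $N(a^i b) = \{e\}$.

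For the hole case, suppose $C$ is a hole of odd length $\ell \geq 5$ in $\mathcal{P}_e(D_{2n})$. By Remark \ref{rem.e}, $e \notin V(C)$. If some reflection $a^i b$ lies on $C$, then it has two neighbours on $C$, but its only neighbour in $\mathcal{P}_e(D_{2n})$ is $e \notin V(C)$, a contradiction. Hence $V(C) \subseteq \langle a\rangle$. But $\langle a\rangle$ is cyclic, so it induces a complete subgraph of $\mathcal{P}_e(D_{2n})$, which cannot contain an induced cycle of length $\geq 4$.

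For the antihole case, let $C'$ be an antihole of length $\ell \geq 5$, i.e., a hole of length $\ell$ in $\overline{\mathcal{P}_e(D_{2n})}$. Since $e$ is adjacent to every other vertex in $\mathcal{P}_e(D_{2n})$, it is isolated in the complement, so $e \notin V(C')$ (cf.\ Remark \ref{rem.e}). If $V(C') \subseteq \langle a\rangle$, then since $\mathcal{P}_e(\langle a\rangle)$ is complete, the complement has no edges among vertices of $\langle a\rangle$, contradicting the existence of a cycle. Otherwise, some reflection $a^i b$ lies on $C'$; but $a^i b$ is non-adjacent in $\mathcal{P}_e(D_{2n})$ to every vertex except $e$ and itself, and hence in $\overline{\mathcal{P}_e(D_{2n})}$ it is adjacent to every vertex in $V(C')\setminus\{a^i b\}$. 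Since $a^i b$ has degree $2$ in the hole $C'$, we get $|V(C')| = 3$, contradicting $\ell \geq 5$.

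There is no real obstacle here: the argument is entirely driven by the fact that reflections have trivial (second-order) cyclic closure, making the non-$\langle a\rangle$ part of $\mathcal{P}_e(D_{2n})$ a set of leaves hanging off $e$. The only mild subtlety is making sure to invoke Remark \ref{rem.e} to exclude $e$ from holes and antiholes before the leaf-argument can be applied.
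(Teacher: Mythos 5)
Your proof is correct and follows the paper's approach: apply the Strong Perfect Graph Theorem, exclude reflections from holes since each $a^ib$ has $e$ as its only neighbour, and use that $\langle a\rangle$ induces a clique. Your antihole case is a streamlined version of the paper's argument — instead of the paper's vertex-by-vertex chase ($x_1,\dots,x_4$), you observe directly that a reflection on the antihole would be complement-adjacent to all other cycle vertices (as $e$ is excluded), forcing length $3$ — which uses the same structural facts but more economically.
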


\begin{proof}
	We apply Theorem \ref{strongperfecttheorem} to prove the theorem.
	 Let $C$ be a hole of $\mathcal{P}_e(D_{2n})$.  We have $\deg(a^ib) = 1$, so that $a^ib \notin C$ for all $0 \leq i \leq n-1$. Thus the vertices of $C$ belong to $\langle a \rangle$. Since the subgraph induced by $\langle a \rangle$ is complete, $C$ is a cycle of length three.
	
	 Now let $C'$ be an antihole of length at least $5$ of $\mathcal{P}_e(D_{2n})$. If possible, suppose that $V(C') \cap \langle a \rangle \neq \emptyset$. Then there exists $x_1 \in C' \cap \langle a \rangle$ such that $x_1 \sim x_2$ in $\overline{\mathcal{P}_e(D_{2n})}$ for some vertex $x_2$ of $C'$. Equivalently, $x_1 \nsim x_2$ in $\mathcal{P}_e(D_{2n})$.  Thus $x_2 = a^ib $ for some $i$. As $|V(C')| \geq 5$, there exists $x_3 \in V(C') {\setminus} \{x_1, x_2\}$ such that $ x_1 \sim x_3$ and $x_2 \nsim x_3$ in $\mathcal{P}_e(D_{2n})$. This implies $x_3 \in \langle a \rangle$. Similarly, there exists $x_4 \in V(C') {\setminus} \{x_1, x_2, x_3\}$ such that $x_3 \nsim x_4,$ $x_1 \sim x_4$ and  $x_2 \sim x_4$ in $\mathcal{P}_e(D_{2n})$.  Note that none of these $x_i's$ are $e$. As $x_1 \sim x_4$, we get $x_1 \in \langle a \rangle$, whereas, $x_3 \nsim x_4$ yield $x_1 \not\in \langle a \rangle$. Since this is impossible, $V(C') \cap \langle a \rangle = \emptyset$. That is, every element of $V(C')$ is of the form $a^ib$. However, the subgraph of $\overline{\mathcal{P}_e(D_{2n})}$ induced by the set $\{a^ib : 0 \leq i \leq n-1\}$ is complete. This contradicts the fact that the length of $C'$ is at least $5$.
	
	 Consequently, the proof follows from Theorem \ref{strongperfecttheorem}.
\end{proof}

\begin{theorem} For $n\geq 2$, we have the following results:
	\begin{enumerate}[\rm(i)]
	    \item The minimum degree  of $\mathcal{P}_e(D_{2n})$ is  $1$.
		\item The independence number of $\mathcal{P}_e(D_{2n})$ is $n+1$.
		\item   The matching number of $\mathcal{P}_e(D_{2n})$ is $\lceil \frac{n}{2} \rceil$.
		\item The strong metric dimension of  $\mathcal{P}_e(D_{2n})$ is  $2(n -1)$.
	\end{enumerate}
\end{theorem}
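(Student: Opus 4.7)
The plan is to exploit the simple structure of $\mathcal{P}_e(D_{2n})$: since every element outside $\langle a \rangle$ has the form $a^i b$ with $o(a^i b) = 2$ and $\langle a^i b\rangle = \{e, a^i b\}$, the graph is just the complete graph on $\langle a \rangle$ together with $n$ pendant vertices $a^0 b, a^1 b, \dots, a^{n-1} b$, each attached solely to $e$. The four invariants will then fall out from this picture combined with the general theorems proved earlier.

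For (i), I will verify that each $\langle a^i b\rangle$ is a maximal cyclic subgroup: any cyclic subgroup containing $a^i b$ must contain an element of order at least $2$ whose square is $e$, but outside $\langle a \rangle$ every element has order exactly $2$, and $a^i b \notin \langle a\rangle$. Hence the smallest maximal cyclic subgroup has order $2$, and Theorem \ref{mindeg} yields $\delta = 1$. For (ii), the maximal cyclic subgroups are exactly $\langle a \rangle$ together with the $n$ groups $\langle a^i b\rangle$ for $0 \le i \le n-1$, so Theorem \ref{indep} gives $\alpha = n+1$.

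For (iii), the structural description above shows any matching uses at most one edge incident to a pendant $a^i b$ (since these are the only edges at $a^i b$, all sharing the endpoint $e$). Splitting into the two cases (either include such a pendant edge and maximally match in $\langle a\rangle \setminus \{e\}$, giving $1 + \lfloor (n-1)/2\rfloor$, or match inside the complete graph $\langle a\rangle$, giving $\lfloor n/2\rfloor$), the first option is at least as large, and both cases coincide with $\lceil n/2\rceil$ in parity, yielding the result.

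For (iv), I will first note that $\mathcal{P}_e(D_{2n})$ has diameter $2$ because $e$ is adjacent to every other vertex, so Theorem \ref{strong-metric-dim} applies. The main step is computing the $\equiv$-classes of $D_{2n}$: Lemma \ref{cameron} (noting $D_{2n}$ is neither cyclic nor generalized quaternion for $n \ge 2$) gives $\widehat{e} = \{e\}$; for $1 \le i \le n-1$, no $a^i$ is adjacent to any $a^j b$, so $N[a^i] = \langle a\rangle$ and these coalesce into one class $\widehat{a} = \langle a\rangle {\setminus} \{e\}$; and each pendant $a^i b$ has the distinct neighborhood $N[a^i b] = \{e, a^i b\}$, hence forms its own class. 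The quotient $\widehat{\mathcal{P}}_e(D_{2n})$ is then the star $K_{1,n+1}$ centered at $\widehat{e}$, so $\omega(\widehat{\mathcal{P}}_e(D_{2n})) = 2$ and $\operatorname{sdim} = 2n - 2 = 2(n-1)$. The main obstacle is the bookkeeping for (iv) — specifically making sure the dichotomy between $a^i \in \langle a\rangle \setminus \{e\}$ and the involutions $a^i b$ is handled without overlooking any potentially coincident closed neighborhoods; the rest is essentially a direct application of the general framework.
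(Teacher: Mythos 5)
Your proposal is correct and proceeds essentially as the paper does: the same structural picture (the complete graph on $\langle a\rangle$ together with $n$ pendant vertices $a^ib$ attached only to $e$), the same matching dichotomy, and the same $\equiv$-class computation giving $\widehat{\mathcal{P}}_e(D_{2n})\cong K_{1,n+1}$, $\omega=2$ and $\operatorname{sdim}=2(n-1)$, with your routing of (i) and (ii) through Theorems \ref{mindeg} and \ref{indep} being only a cosmetic difference from the paper's direct degree and independent-set arguments. One small caution: Lemma \ref{cameron} is a statement about the power graph $\mathcal{P}(G)$, which need not coincide with $\mathcal{P}_e(D_{2n})$ (cf.\ Lemma \ref{pg.epg}), so it does not by itself rule out extra dominating vertices of the enhanced power graph; however, the citation is dispensable, since your own computations $N[a^i]=\langle a\rangle$ for $1\le i\le n-1$ and $N[a^ib]=\{e,a^ib\}$ already show that no non-identity vertex has closed neighborhood $D_{2n}$, hence $\widehat{e}=\{e\}$.
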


\begin{proof}
(i)	Since the only vertex adjacent to $ a^ib $ is $e$ for any $0 \leq i \leq n-1$, the proof follows.
	
\smallskip

\noindent
(ii) Observe that the set $I = \{a\} \cup  \{a^ib : 0 \leq i \leq n-1\}$  is an independent set, and thus $\alpha (\mathcal{P}_e(D_{2n})) \geq n + 1$. If there exists an independent set $I'$ such that $|I'| > n +1$, then we must have $x, y \in I'$ such that $x, y  \in \langle a \rangle$. However, this results in $x \sim y$, which is a contradiction. As a result, $\alpha(\mathcal{P}_e(D_{2n})) = n+1$.
		
\smallskip
 \noindent
		(iii) If $n$ is even, then observe that the size of maximum matching is $\frac{n}{2}$ which can be constructed from the complete graph induced by  $\langle a \rangle$. If $n$ is odd, then the size of maximum matching is $\lceil \frac{n}{2} \rceil$ which can be constructed $\frac{n-1}{2}$ edges of  $\langle a \rangle {\setminus} \{e\}$ and one edge $(ab, e)$ of $H_1$. Therefore, $\alpha'(\mathcal{P}_e(D_{2n})) = \lceil \frac{n}{2} \rceil$.
	
\smallskip \noindent
(iv)  The $\equiv$-classes in $D_{2n}$ are $\widehat{ e}$,
$\widehat{ a}$, $ \widehat{ b}$, $ \widehat{ ab}$, \dots, $ \widehat{ a^{n-1}b}$, where $\widehat{ e} =\{e\}$, $\widehat{ a} = \langle a \rangle {\setminus} \{e\}$ and $\widehat{ a^ib} = \{a^ib\}$. Then in view of the adjacency relation of elements of these classes in $\mathcal{P}_e(D_{2n})$, we have $\widehat{\mathcal{P}}_e(\mathcal{R}_{D_{2n}}) \cong K_{1, n+1} $. So that  $\omega(\widehat{\mathcal{P}}_e(\mathcal{R}_{D_{2n}})) = 2$. Hence $\operatorname{sdim}(\mathcal{P}_e(D_{2n})) = 2(n -1)$, by Theorem \ref{strong-metric-dim}.
\end{proof}

\bigskip
\subsection{Semidihedral group.}\hfill
\smallskip

 For $n \geq 2$, the \emph{semidihedral group} $SD_{8n}$ is a group of order $8n$ with presentation  $$SD_{8n} = \langle a, b  :  a^{4n} = b^2 = e,  ba = a^{2n -1}b \rangle.$$
 We have $$ ba^i = \left\{ \begin{array}{ll}
a^{4n -i}b & \mbox{if $i$ is even,}\\
a^{2n - i}b& \mbox{if $i$ is odd,}\end{array} \right.$$
so that every element of $SD_{8n} {\setminus} \langle a \rangle$ is of the form $a^ib$ for some $0 \leq i \leq 4n-1$. We denote the subgroups $H_i = \langle a^{2i}b \rangle = \{e, a^{2i}b\}$ and $ T_j =  \langle a^{2j + 1}b \rangle = \{e, a^{2n}, a^{2j +1}b, a^{2n + 2j +1}b\} $. Then we have $$SD_{8n} = \langle a \rangle \cup \left( \bigcup\limits_{ i=0}^{2n-1} H_i \right) \cup \left( \bigcup\limits_{ j=  0}^{n-1} T_{j}\right).$$

\begin{theorem}
	The enhanced power graph of $SD_{8n}$ is perfect.
\end{theorem}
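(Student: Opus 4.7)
The plan is to invoke Theorem~\ref{strongperfecttheorem}, so it suffices to show that $\mathcal{P}_e(SD_{8n})$ contains no induced hole or antihole of odd length at least $5$. The structural backbone I will lean on is that the maximal cyclic subgroups of $SD_{8n}$ are $\langle a\rangle$, the $H_i=\{e,a^{2i}b\}$, and the $T_j=\{e,a^{2n},a^{2j+1}b,a^{2n+2j+1}b\}$, inducing cliques of sizes $4n$, $2$, and $4$ respectively. Three immediate consequences drive the argument: (i) every $a^{2i}b$ has degree $1$ with $e$ its unique neighbor; (ii) $a^{2n}$ lies in $\langle a\rangle$ and in every $T_j$; and (iii) $T_j\cap T_{j'}=\{e,a^{2n}\}$ whenever $j\neq j'$.

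For the hole case, let $C$ be an induced cycle of length $\ell\ge 5$. The universal vertex $e$ and every degree-$1$ vertex $a^{2i}b$ must be excluded from $V(C)$, so $V(C)\subseteq (\langle a\rangle\setminus\{e\})\cup\bigcup_j(T_j\setminus\{e,a^{2n}\})$. If some $x\in V(C)$ lies in $T_j\setminus\{e,a^{2n}\}$, then within this admissible vertex set the only $\mathcal{P}_e$-neighbors of $x$ are $a^{2n}$ and the sibling $x'$ of $x$ in $T_j$, so both are forced to be the two $C$-neighbors of $x$; the same reasoning applied to $x'$ forces $\{x,a^{2n}\}$ to be the two $C$-neighbors of $x'$, producing the triangle $\{x,x',a^{2n}\}$ in $V(C)$, which is impossible in an induced cycle of length $\ge 4$. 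Hence $V(C)\subseteq\langle a\rangle\setminus\{e\}$, a clique, so no such $C$ exists.

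For the antihole case, which I expect to be the main obstacle, let $C'$ be an induced cycle of length $\ell\ge 5$ in $\overline{\mathcal{P}_e(SD_{8n})}$. First I will prune $V(C')$: $e$ is isolated in the complement, hence absent from $V(C')$; each $a^{2i}b$ is complement-adjacent to every vertex other than itself and $e$, so if $a^{2i}b\in V(C')$ then all other $\ell-1\ge 4$ vertices of $C'$ would be complement-adjacent to it, forcing $\ell=3$; and $a^{2n}$'s complement-neighbors are exactly the $a^{2i}b$'s, which are now excluded. Thus $V(C')\subseteq C_a\cup E$, where $C_a=\langle a\rangle\setminus\{e,a^{2n}\}$ and $E=\{a^{2k+1}b:0\le k\le 2n-1\}$. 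Setting $p=|V(C')\cap C_a|$ and $r=|V(C')\cap E|$, one finishes by a short count: if $p=0$ then $V(C')\subseteq E$, but two $E$-vertices are $\mathcal{P}_e$-adjacent exactly when they are siblings in a common $T_j$, so each $E$-vertex has only one non-neighbor in $\overline{\mathcal{P}_e}$ within $E$, whereas a hole of length $\ell\ge 5$ forces each vertex to have $\ell-3\ge 2$ non-neighbors inside $V(C')$, a contradiction; if $p\ge 1$, then any $c\in V(C')\cap C_a$ is $\mathcal{P}_e$-adjacent to every other element of $\langle a\rangle$ and non-adjacent to every element of $E$, so its two $C'$-neighbors must both lie in $V(C')\cap E$, yielding $r=2$ and $p=\ell-2\ge 3$, and then each of these two $E$-vertices would have to be $C'$-adjacent to every $c\in V(C')\cap C_a$, giving each at least $p\ge 3$ neighbors on a cycle — impossible.

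The hard part is organizing the antihole bookkeeping: once $a^{2i}b$ and $a^{2n}$ are eliminated first using their extreme degrees in the complement, the remaining bipartite-like structure between $C_a$ and $E$ collapses quickly, but getting the exclusions in the right order is essential to keep the case analysis short.
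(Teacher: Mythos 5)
Your proof is correct and follows essentially the same route as the paper: both invoke the Strong Perfect Graph Theorem (Theorem \ref{strongperfecttheorem}) and exploit the maximal cyclic subgroups $\langle a\rangle$, $H_i$, $T_j$ of $SD_{8n}$, first eliminating $e$, the degree-one vertices $a^{2i}b$, and $a^{2n}$, and then reducing holes to the clique $\langle a\rangle$ and antiholes to the odd-power coset elements. The only difference is organizational: your antihole case finishes with a count on $p=|V(C')\cap(\langle a\rangle\setminus\{e,a^{2n}\})|$ and $r=|V(C')\cap\{a^{2j+1}b : 0\le j\le 2n-1\}|$, whereas the paper chases four explicit vertices $x_1,\dots,x_4$, but the structural facts used are identical.
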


\begin{proof}
	We utilize the notion of hole or antihole once  again to prove the theorem. First suppose $C$ is a hole of $\mathcal{P}_e(SD_{8n})$. For any $0 \leq i \leq 2n-1$, we notice $\deg(a^{2i}b) = 1$, so that $a^{2i}b \notin V(C)$. Since $a^{2n} \sim x$ for all $x \in \langle a \rangle \cup T_j$, we have $a^{2n} \notin V(C)$. Then we observe that N$[x] = \langle a \rangle$ if and only if $x \in \langle a \rangle {\setminus} \{e, a^{2n}\}$, and that N$[x] = T_j$ if and only if $x \in T_j {\setminus} \{e, a^{2n}\}$. Additionally, $e \notin V(C)$ as well. Thus all vertices of $C$ either belong to $ \langle a \rangle {\setminus} \{e, a^{2n}\}$ or $ T_j {\setminus} \{e, a^{2n}\}$. As $|T_j {\setminus} \{e, a^{2n}\}| = 2$, we have $V(C) \not\subset T_j {\setminus} \{e, a^{2n}\}$. Accordingly, $V(C) \subseteq \langle a \rangle {\setminus} \{e, a^{2n}\}$. Hence the length of $C$ is $3$, as $\langle a \rangle$ induces a complete subgraph in $\mathcal{P}_e(SD_{8n})$.
	
	Next, if possible, let $C'$ be an antihole of $\mathcal{P}_e(SD_{8n})$ of length atleast five. Since $a^{2i}b$ is adjacent with every element except $e$ in  $\overline{\mathcal{P}_e(SD_{8n})}$, and $e \notin V(C')$, we have $a^{2i}b \notin V(C')$. Now suppose $V(C') \cap \langle a \rangle \neq \emptyset$. Then there exists $x_1 \in V(C') \cap \langle a \rangle$ such that $x_1 \sim x_2$ in $\overline{\mathcal{P}_e(SD_{8n})}$ for some $x_2 \in V(C')$. Then $x_1 \nsim x_2$ in $\mathcal{P}_e(SD_{8n})$, so that $x_2 \in T_j {\setminus} \{e, a^{2n}\}$ for some $j$. Since $|V(C')| \geq 5$, there exists $x_3 \in V(C') {\setminus} \{x_1, x_2\}$ such that $ x_1 \sim x_3$ and $x_2 \nsim x_3$ in $\mathcal{P}_e(SD_{8n})$. As a result, $x_3 \in \langle a \rangle {\setminus} \{e, a^{2n}\}$. Furthermore, there exists $x_4 \in V(C') {\setminus} \{x_1, x_2, x_3\}$ such that $x_3 \nsim x_4,$ $x_1 \sim x_4$ and  $x_2 \sim x_4$ in $\mathcal{P}_e(SD_{8n})$. Then $x_1 \sim x_4$ and $x_3 \nsim x_4$ imply, respectively, that $x_3 \in \langle a \rangle$ and $x_3 \not\in \langle a \rangle$. Since this is impossible, $V(C') \cap \langle a \rangle = \emptyset$. Consequently, every vertex of $C'$ is of the form $a^{2i +1}b$. However, for any $0 \leq i \leq 2n-1$, $a^{2i +1}b$  is adjacent to every vertex in $\{a^{2j +1}b : 0 \leq j \leq 2n-1\} {\setminus} \{a^{2i +1}b, a^{2n+2i +1}b\}$ in $\overline{\mathcal{P}_e(SD_{8n})}$. This contradicts our assumption that  $C'$ is an antihole of $\mathcal{P}_e(SD_{8n})$ of length atleast five.
	
	Following Theorem \ref{strongperfecttheorem}, we therefore conclude that $\mathcal{P}_e(SD_{8n})$ is perfect.
\end{proof}

 Now we investigate graph invariants of $\mathcal{P}_e(SD_{8n})$ in the  following theorem.

\begin{theorem} For $n\geq 1$, we have the following results:
\begin{enumerate}[\rm(i)]
\item The minimum degree of  $\mathcal{P}_e(SD_{8n})$ is   $1$.
\item The independence number of  $\mathcal{P}_e(SD_{8n})$ is $3n + 1 $
\item   The matching number of  $\mathcal{P}_e(SD_{8n})$ is $3n$.
\item The strong metric dimension of $\mathcal{P}_e(SD_{8n})$ is  $8n -3$.
\end{enumerate}
\end{theorem}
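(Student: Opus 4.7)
For parts (i) and (ii), I plan to apply the general results of Section~\ref{main} directly. For (i), Theorem~\ref{mindeg} identifies $\delta(\mathcal{P}_e(SD_{8n}))$ with one less than the order of a smallest maximal cyclic subgroup. Each element $a^{2i}b$ has order $2$ and lies in no larger cyclic subgroup of $SD_{8n}$---indeed $a^{2i}b\notin \langle a \rangle$, while the elements of $T_j\setminus \langle a \rangle$ all have odd exponent of $a$---so $H_i = \{e, a^{2i}b\}$ is maximal cyclic of order $2$, yielding $\delta(\mathcal{P}_e(SD_{8n}))=1$. For (ii), by Theorem~\ref{indep} the maximal cyclic subgroups of $SD_{8n}$ are precisely $\langle a \rangle$, the $2n$ subgroups $H_i$, and the $n$ subgroups $T_j$, giving $\alpha(\mathcal{P}_e(SD_{8n})) = 1 + 2n + n = 3n + 1$.

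For (iii), the involutions in $SD_{8n}$ are $a^{2n}$ together with the $2n$ elements $a^{2i}b$, so $t = 2n+1$. Theorem~\ref{matching} then provides the lower bound $\alpha'(\mathcal{P}_e(SD_{8n})) \geq (8n-2n)/2 = 3n$. For the matching upper bound, I exploit the fact that every $a^{2i}b$ has $e$ as its unique neighbour; hence in any matching at most one of the $2n$ vertices $\{a^{2i}b : 0\le i \le 2n-1\}$ can be saturated (necessarily by pairing it with $e$), forcing at least $2n-1$ of them to remain unmatched. This yields $\alpha'(\mathcal{P}_e(SD_{8n})) \leq \lfloor(8n-(2n-1))/2\rfloor = 3n$, and equality holds.

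The main work is (iv), where I invoke Theorem~\ref{strong-metric-dim} and must compute $\omega(\widehat{\mathcal{P}}_e(SD_{8n}))$. Using the decomposition $SD_{8n} = \langle a \rangle \cup \bigcup_i H_i \cup \bigcup_j T_j$ together with the intersections $T_j \cap \langle a \rangle = \{e, a^{2n}\}$, $H_i \cap \langle a \rangle = \{e\}$, and noting that $\langle a^{2n}, a^{2i}b\rangle$ is a Klein four-group rather than cyclic, I determine the closed neighbourhoods as $N[e]=SD_{8n}$, $N[x]=\langle a \rangle$ for $x\in \langle a \rangle\setminus\{e, a^{2n}\}$, $N[a^{2n}]=\langle a \rangle \cup \bigcup_j T_j$, $N[a^{2i}b]=H_i$, and $N[a^{2j+1}b]=N[a^{2n+2j+1}b]=T_j$. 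Hence the $\equiv$-classes are $\{e\}$, $\{a^{2n}\}$, $\langle a \rangle\setminus\{e,a^{2n}\}$, the $n$ pairs $\{a^{2j+1}b, a^{2n+2j+1}b\}$ for $0\leq j\leq n-1$, and the $2n$ singletons $\{a^{2i}b\}$. In $\widehat{\mathcal{P}}_e(SD_{8n})$, the vertex $\widehat{e}$ is universal, $\widehat{a^{2n}}$ is adjacent to $\widehat{a}$ and to every $\widehat{a^{2j+1}b}$, while $\widehat{a}$ and the $\widehat{a^{2j+1}b}$'s form an independent set, and each $\widehat{a^{2i}b}$ is adjacent only to $\widehat{e}$. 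Removing $\widehat{e}$ leaves a star centred at $\widehat{a^{2n}}$, so $\omega(\widehat{\mathcal{P}}_e(SD_{8n}))=3$, realized by $\{\widehat{e},\widehat{a^{2n}},\widehat{a}\}$, giving $\operatorname{sdim}(\mathcal{P}_e(SD_{8n})) = 8n - 3$ by Theorem~\ref{strong-metric-dim}. The principal obstacle is the bookkeeping for the $\equiv$-classes, in particular verifying that $a^{2j+1}b$ and $a^{2n+2j+1}b$ merge into one class while each reflection $a^{2i}b$ remains a singleton class.
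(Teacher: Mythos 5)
Your proposal is correct, and it reaches the same numbers as the paper while routing parts (i)--(iii) through the general theorems rather than the paper's direct constructions. For (i) the paper simply observes $\deg(a^{2i}b)=1$, while you identify $H_i$ as a smallest maximal cyclic subgroup and quote Theorem \ref{mindeg}; for (ii) the paper exhibits the independent set $\{a\}\cup\{a^{2i}b\}\cup\{a^{2j+1}b\}$ and rules out anything larger by a clique argument, whereas you count the maximal cyclic subgroups ($\langle a\rangle$, the $2n$ subgroups $H_i$, the $n$ subgroups $T_j$) and invoke Theorem \ref{indep} --- logically the same content, but your version makes the reliance on the classification of maximal cyclic subgroups explicit, so the brief justification that each $H_i$ and $T_j$ is in fact maximal (which you give via the parity of the exponent of $a$) is the one point that must not be skipped. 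For (iii) the paper builds an explicit matching $\{\epsilon_i\}\cup\{\epsilon_j'\}$ of size $3n$, while you get the lower bound from Theorem \ref{matching} by counting the $t=2n+1$ involutions; both then use essentially the same parity/counting upper bound coming from the fact that $e$ is the unique neighbour of every $a^{2i}b$. For (iv) your argument coincides with the paper's: same $\equiv$-classes, same clique $\{\widehat{e},\widehat{a^{2n}},\widehat{a}\}$, same appeal to Theorem \ref{strong-metric-dim}; in fact your neighbourhood bookkeeping is slightly more accurate than the paper's phrasing, since $\widehat{a^{2n}}$ is \emph{not} adjacent to the vertices $\widehat{a^{2i}b}$ (this does not affect the conclusion $\omega(\widehat{\mathcal{P}}_e(SD_{8n}))=3$, as the graph minus $\widehat{e}$ is a star centred at $\widehat{a^{2n}}$ together with isolated vertices). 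The only implicit hypothesis in both treatments is that $\mathcal{P}_e(SD_{8n})$ has diameter $2$, which holds because $e$ is universal and the graph is not complete.
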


\begin{proof}
(i) The proof follows from the fact that $e$ is the only vertex adjacent to $a^{2i}b$ for any $0 \leq i \leq 2n-1$.

 \smallskip

 \noindent
 (ii) Note that the set $I = \{a \} \cup \{a^{2i}b : 0 \leq i \leq 2n-1 \}  \cup \{a^{2j + 1}b : 0 \leq j \leq n-1 \}$ is an independent set in $\mathcal{P}_e(SD_{8n})$ so $\alpha (\mathcal{P}_e(SD_{8n})) \geq 3n + 1$. If possible, suppose there exists an independent set $I'$ such that $|I'| > 3n +1$. Then there exist $x, y \in I'$ such that  $x, y  \in \langle a \rangle$, $x, y \in H_i$ for some $i$ or $x, y \in T_j$ for some $ j$. Since  subgraphs induced by  $\langle a \rangle, \; H_i$ and $ T_j$, respectively  forms  a clique, we have $x \sim y$ for each of the possibility, a contradiction. Accordingly,  $\alpha(\mathcal{P}_e(SD_{8n})) = 3n+1$.

 \smallskip

 \noindent
(iii) Let $M$ be an matching in $\mathcal{P}_e(SD_{8n})$. Consider the set $U$ of endpoints of edges in $M$. We observe that  $a^{2i}b \sim x$ if and only if $x = e$. As a result,  $|U| \leq 6n +1$. However, as $M$ is a matching, $|U|$ is even. Then $|U| \leq 6n$ and thus $|M| \leq 3n$. Now let $\epsilon_i$ be the edge with endpoints $a^i, a^{2n +i}$, and $\epsilon'_j$ be the edge with endpoints $a^{2j + 1}b, a^{2n + 2j + 1}b$. Then the set  $M' = \{\epsilon_i  :  0 \leq i \leq 2n-1 \} \cup \{\epsilon'_j  :  0 \leq j \leq n-1 \}$ is a matching of size $3n$ in $\mathcal{P}_e(SD_{8n})$. Hence we get $\alpha'(\mathcal{P}_e(SD_{8n})) = 3n$.

 \smallskip

 \noindent

(iv) From the structure of $SD_{8n}$, we have $\widehat{SD_{8n}} = \{\widehat{ e}, \widehat{a^{2n}}, \widehat{a}\} \cup \{\widehat{a^{2i}b} : 0 \leq i \leq 2n-1\} \cup \{\widehat{a^{2j + 1}b} : 0 \leq j \leq n-1\}$, where $\widehat{e} =\{e\}$, $\widehat{a^{2n}} = \{a^{2n}\}$, $\widehat{a} = \langle a \rangle {\setminus}\{e, a^{2n} \},  \widehat{a^{2i}b} = \{a^{2i}b\}$, and $\widehat{a^{2j + 1}b} = \{a^{2j + 1}b, a^{2n + 2j + 1}b\}$.
 Furthermore, \break ${\widehat{SD_{8n}}} {{\setminus}} \{\widehat{e}, \widehat{a^{2n}}\}$ is an independent set, and each of $\widehat{e}$ and $\widehat{a^{2n}}$ are adjacent to the rest of the vertices  in $\widehat{\mathcal{P}}_e(\mathcal{R}_{SD_{8n}})$.  Thus we get $\omega({\widehat{\mathcal{P}}_e(\mathcal{R}_{SD_{8n}}}) = 3$. Finally by Theorem \ref{strong-metric-dim},  $\operatorname{sdim}(\mathcal{P}_e(SD_{8n})) = 8n -3$.
\end{proof}

\section{Acknowledgement}
The third author wishes to acknowledge the support of MATRICS Grant \break (MTR/2018/000779) funded by SERB, India.

%
%
%

\break

\noindent
{\bf Ramesh Prasad Panda\textsuperscript{\normalfont 1}, Sandeep Dalal\textsuperscript{\normalfont 2}, Jitender Kumar\textsuperscript{\normalfont 2}}

\bigskip

\noindent{\bf Addresses}:

\vspace{5pt}

\noindent
\textsuperscript{\normalfont 1}School of Mathematical Sciences, National Institute of Science Education and Research Bhubaneswar, P.O. - Jatni, District - Khurda, Odisha - 752050, India.

\vspace{5pt}

\noindent	
\textsuperscript{\normalfont 1}Homi Bhabha National Institute, Training School Complex, Anushakti Nagar, Mumbai - 400094, India.

\vspace{5pt}

\noindent
\textsuperscript{\normalfont 2}Department of Mathematics, Birla Institute of Technology and Science Pilani, Pilani - 333031, India.

\bigskip

\noindent{\bf Email addresses}: {deepdalal10@gmail.com (S. Dalal), jitenderarora09@gmail.com (J. Kumar)}
\end{document}